\theoremstyle{plain} %text of this environment is typesetted in italics
\newtheorem{theorem}{\indent\bf Theorem}[section]
\newtheorem{lemma}[theorem]{\indent\bf Lemma}
\newtheorem{corollary}[theorem]{\indent\bf Corollary}
\newtheorem{proposition}[theorem]{\indent\bf Proposition}
\newtheorem{conjecture}[theorem]{\indent\bf Conjecture}
\theoremstyle{definition} %text of this environment is typesetted in roman letters
\newtheorem{definition}[theorem]{\indent\bf Definition}
\newtheorem{remark}[theorem]{\indent\bf Remark}
\newtheorem{example}[theorem]{\indent\bf Example}
\newtheorem{question}[theorem]{\indent\bf Question}
\newcommand{\ddbar}{\partial \overline{\partial}}
\newcommand{\ai}{\sqrt{-1}}
\newcommand{\R}{\mathbb{R}}
\newcommand{\C}{\mathbb{C}}
\newcommand{\N}{\mathbb{N}}
\newcommand{\dz}{\mathrm{d}z}
\newcommand{\dbarz}{\mathrm{d}\bar{z}}
\newcommand{\dtau}{\mathrm{d}\tau}
\newcommand{\dbartau}{\mathrm{d}\bar{\tau}}
\newcommand{\Id}{\mathrm{Id}}
\newcommand{\Rea}{\mathrm{Re}}
\newcommand{\Grif}{\mathrm{Grif.}}
\newcommand{\Nak}{\mathrm{Nak.}}
\newcommand{\sasyugo}{\setminus\!}
\begin{document}
\pagestyle{plain}
\thispagestyle{plain}

\title[$L^2$-extension indices, sharper estimates and curvature positivity]
{$L^2$-extension indices, sharper estimates and curvature positivity}

\author[T. INAYAMA]{Takahiro INAYAMA}
\address{Department of Mathematics\\
Faculty of Science and Technology\\
Tokyo University of Science\\
2641 Yamazaki, Noda\\
Chiba, 278-8510\\
Japan
}
\email{inayama\_takahiro@rs.tus.ac.jp}
\email{inayama570@gmail.com}
\subjclass[2020]{32A36, 32U05}
\keywords{ %key words and phrases
Ohsawa--Takegoshi extension theorem, $L^2$-extension, plurisubharmonic function, Griffiths positivity, $L^2$-extension index.
}
%\date{\today}

%%%%%%%%%%%%%%%%%%%%%%%%%%%%%%%%%%%%%%%%%%%%%%%%%%%%%%
\begin{abstract}
 In this paper, we introduce a new concept of $L^2$-extension indices.
 This index is a function that gives the minimum constant with respect to the $L^2$-estimate of an Ohsawa--Takegoshi-type extension at each point.
 By using this notion, we propose a new way to study the positivity of curvature. 
 We prove that there is an equivalence between how sharp the $L^2$-extension is and how positive the curvature is.
 New examples of sharper $L^2$-extensions are also systematically given. 
 As applications, we use the $L^2$-extension index to study Pr\'ekopa-type theorems and to study the positivity of a certain direct image sheaf. 
 We also provide new characterizations of pluriharmonicity and curvature flatness. 
\end{abstract}

%%%%%%%%%%%%%%%%%%%%%%%%%%%%%%%%%%%%%%%%%%%%%%%%%%%%%%

\maketitle
\setcounter{tocdepth}{2}
%\tableofcontents
%%%%%%%%%%%%%%%%%%%%%%%%%%%%%%%%%%%%%%%%%%%%%%%%%%%%%%

\section{Introduction}\label{sec:intro}

The Ohsawa--Takegoshi $L^2$-extension theorem \cite{OT87} is a fundamental theorem concerning the $L^2$-extension of a holomorphic function.
This theorem has been applied to a variety of fields, not only in complex analysis, but also in algebraic geometry.
For example, by using this theorem, Siu established the invariance of plurigenera \cite{Siu98}.
%There are also many applications not only in complex geometry, but also in algebraic geometry. 
%The theorem asserts that every holomorphic function defined on a hyperplane of bounded pseudoconvex domain with finite $L^2$-norm can be extended to a global holomorphic function with the estimate of $L^2$-norms. 
The statement of the $L^2$-extension theorem is as follows.
Let $\Omega\subset \C^n$ be a bounded pseudoconvex domain with $\Omega\subset \C^{n-1}\times \{|z_n|<r\}$ for $r>0$, $\varphi$ be a plurisubharmonic (psh, for short) function on $\Omega$ and $H:=\Omega \cap \{ z_n=0\} (\neq \varnothing)$.
Then for every holomorphic function $f$ on $H$ with finite $L^2$-norm, there is a holomorphic function $F$ on $\Omega$ such that $F|_{H}=f$ and 
\begin{equation}\label{eq:introOT}
\int_\Omega |F|^2e^{-\varphi}\leq Cr^2 \int_H |f|^2e^{-\varphi}
\end{equation}
for some universal positive constant $C>0$, which is independent of the choice of $\varphi$ and $f$. 
Thanks to the celebrated results by B\l ocki \cite{Blo13} and Guan--Zhou \cite{GZ15}, we can take $C=\pi$, which is known as the optimal constant.

Conversely, it is known that if an upper-semicontinuous function $\varphi$ satisfies the above optimal $L^2$-extension theorem, $\varphi$ is necessarily psh. This phenomenon is called {\it the minimal extension property} or {\it the optimal $L^2$-extension property} (see Theorem \ref{thm:minimalextensionproperty}). 
Roughly speaking, we can say that the optimality of the $L^2$-extension is equivalent to the plurisubharmonicity of the weight $\varphi$. 
This idea has led to numerous useful applications. 
Indeed, there are Guan--Zhou's approach \cite{GZ15} to Berndtsson's log-plurisubharmonicity of the relative Bergman kernel \cite{Ber06}, new studies on positivity for vector bundles \cite{HPS18, DWZZ18, HI21, DNW21, DNWZ22, KP21} and a simple proof of Pr\'ekopa's theorem \cite{Ina22JGA}. 
For these reasons, this property can be considered as important. 

%One important point of the Ohsawa-Takegoshi extension theorem is that the 
If we allow the constant $C$ in (\ref{eq:introOT}) to depend on the weight, we can sharpen this estimate. 
This study was initiated by Hosono \cite{Hos19}, and generalized by Kikuchi \cite{Kik22} and Xu--Zhou \cite{XZ22}.
In this paper, we call these types of estimates \textit{sharper estimates}. 
The sharper estimate asserts that the strict inequality holds in the estimate (\ref{eq:introOT}) under some geometric settings.
Hence, it is natural and important to ask the question, ``What do we know about the positivity of $\varphi$ from the assumption that $\varphi$ satisfies the condition of sharper estimates?"
To think about the problem a little more precisely, we introduce a notion of $L^2$-extension indices.

\begin{definition}\label{intro:l2index}
	Let $\Omega\subset \C$ be a bounded domain, $\varphi$ be a smooth function on $\Omega$ and $\delta(a):= \sup\{ r>0 \mid \Delta(a;r)=\{z\in \C \mid |z-a|<r \} \subset \Omega\}$. 
	We also let $\Omega_\delta$ be $\Omega_\delta=\{ (a,r)\in \Omega \times \R \mid 0<r<\delta(a) \}$.
	Then we define {\it the $L^2$-extension index} $L_\varphi$ of $\varphi$ on $\Omega_\delta$ by 
	\begin{align*}
	L_\varphi (a, r)&= \frac{1}{\pi r^2 K_{\Delta(a;r),\varphi}(a)},
	%&=\inf\left\{ \frac{\int_{\Delta(a;r)}|f|^2e^{-\varphi}}{\pi r^2 e^{-\varphi(a)}} ~\middle|~ f\in\mathcal{O}(\Delta(a;r)) \text{ satisfying } f(a)=1  \right\},
    \end{align*}
where $K_{\Delta(a;r),\varphi}$ is the weighted Bergman kernel on $\Delta(a;r)$ with respect to the weight $\varphi$.
	%where $\mathcal{O}(\Delta(a;r))$ is the set of holomorphic functions on $\Delta(a;r)$. 
\end{definition}

Let $A^2(\Omega,\varphi)$ denote $A^2(\Omega,\varphi)=\{ f\in \mathcal{O}(\Omega) \mid \int_\Omega |f|^2e^{-\varphi}<+\infty \}$, where $\mathcal{O}(\Omega)$ is the space of holomorphic functions on a domain $\Omega$. 
Since 
$$
\frac{1}{\pi r^2 K_{\Delta(a;r),\varphi}(a)}
=\inf\left\{ \frac{\int_{\Delta(a;r)}|f|^2e^{-\varphi}}{\pi r^2 e^{-\varphi(a)}} ~\middle|~ f\in A^2(\Delta(a;r),\varphi), f(a)=1  \right\},
$$
the $L^2$-extension index $L_\varphi$ gives the best constant (depending on the weight $\varphi$) with respect to the $L^2$-estimate of the Ohsawa--Takegoshi extension at each point. 
%As mentioned above, the minimal extension property asserts that $L_\varphi\leq 1$ implies that $\varphi$ is psh. 
Using this notion, we see that the question to be considered can be summarized as follows. 

\begin{question}\label{mainque:sharper}
	Keep the notation.
	If $L_\varphi(a,r)<1$ for each point $(a,r)\in \Omega_\delta$, what can we know about $\varphi$?
	For example, can we say that $\varphi$ is strictly psh?
\end{question}

In this paper, %we not only solve this question affirmatively, but also use the $L^2$-extension index to give a quantitative estimate of the curvature of $\varphi$.
we use the $L^2$-extension index to give a quantitative estimate of the curvature of $\varphi$ and give an answer to Question \ref{mainque:sharper}. 
One of the main theorems in this paper is the following.

\begin{theorem}\label{mainthm:1jigen}
	Let $\Omega\subset \C$ be a bounded domain, $\varphi$ be a smooth function on $\Omega$ and $\omega=\ai \dz\wedge\dbarz$. 
	%Then we have the following: 
	%\begin{enumerate}
		%\item 
		Suppose that for any point $a\in\Omega$, there exist $\gamma_a\in (0,\delta(a))$ and a semi-positive lower semi-continuous function $g_a\colon[0, \gamma_a]\to \R_{\geq 0}$ such that $L_\varphi(a,r)\leq e^{-g_a(r)r^2}$ for $r\in (0,\gamma_a)$. 
		%there exist an open neighborhood $U_a\subset \Omega\times \R$ of $(a,0)$ and a semi-positive lower semi-continuous function $g_a\colon\overline{U_a\cap \Omega_\delta}\to \R_{\geq 0}$ such that  $L_\varphi(x,r)\leq e^{-g_a(x,r)r^2}$ on ${U_a\cap \Omega_\delta}$.
		Then 
		$$
		\ai \ddbar \varphi(a)\geq 2g_a(0)\omega.
		$$
		%\item Suppose that for any point $a\in\Omega$, there exist $\gamma_a\in (0,\delta(a))$ and a semi-positive upper semi-continuous function $g_a\colon [0,\gamma_a]\to \R_{\geq 0}$ such that $L_\varphi(a,r)\leq e^{g_a(r)r^2}$. 
		%there exist an open neighborhood $U_a\subset \Omega\times \R$ of $(a,0)$ and a semi-positive upper semi-continuous function $g_a\colon \overline{U_a\cap \Omega_\delta}\to \R_{\geq 0}$ such that $L_\varphi(x,r)\leq e^{g_a(x,r)r^2}$ on ${U_a\cap \Omega_\delta}$.
		%Then 
		%$$
		%\ai \ddbar \varphi(a)\geq -g_a(0)\omega.
		%$$
	%\end{enumerate}
	%Suppose that there exists a lower semi-continuous function $g:\overline{\Omega}_\delta\to \R$ such that $L_\varphi(a,r)\leq e^{-g(a,r)r^2}$ for any $(a,r)\in \Omega_\delta$. 
	%Suppose that for any point $a\in\Omega$, 
	%Then 
	%$$
	%\ai \ddbar \varphi(a)\geq g(a,0)\omega, 
	%$$
	%where $\omega=\ai \dz\wedge\dbarz$. 
\end{theorem}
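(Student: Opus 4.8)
The plan is to reduce the theorem to a single small-radius estimate for the $L^2$-extension index and then read off the curvature lower bound. Fix $a\in\Omega$; the crux is the asymptotic lower bound
\[
L_\varphi(a,r)\ \ge\ 1-\tfrac12\,\varphi_{z\bar z}(a)\,r^2+O(r^3)\qquad(r\to0^+),
\]
where $\varphi_{z\bar z}:=\partial_z\partial_{\bar z}\varphi$, so that $\ai\ddbar\varphi(a)=\varphi_{z\bar z}(a)\,\omega$. Granting this, the theorem is immediate: by hypothesis $g_a(r)\,r^2\le-\log L_\varphi(a,r)\le\tfrac12\varphi_{z\bar z}(a)\,r^2+O(r^3)$ on $(0,\gamma_a)$, hence $g_a(r)\le\tfrac12\varphi_{z\bar z}(a)+O(r)$; letting $r\to0^+$ and using lower semicontinuity of $g_a$ at $0$ gives $g_a(0)\le\liminf_{r\to0^+}g_a(r)\le\tfrac12\varphi_{z\bar z}(a)$, i.e. $\ai\ddbar\varphi(a)\ge 2g_a(0)\,\omega$. (Here one uses $L_\varphi(a,r)>0$ together with $-\log(1-u)=u+O(u^2)$ for $u=\tfrac12\varphi_{z\bar z}(a)r^2+O(r^3)\to0$.)

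To prove the lower bound I would rescale and normalize. By the variational formula for $L_\varphi$ recorded after Definition \ref{intro:l2index}, the substitution $z=a+rw$ identifies $L_\varphi(a,r)$ with
\[
\inf\Big\{\tfrac1\pi\!\int_{\Delta(0;1)}\!|h(w)|^2e^{-(\varphi(a+rw)-\varphi(a))}\,\dl(w)\ \Big|\ h\in\mathcal O(\Delta(0;1)),\ h(0)=1\Big\}.
\]
Taylor expansion gives $\varphi(a+rw)-\varphi(a)=2\Rea\big(\varphi_z(a)rw+\tfrac12\varphi_{zz}(a)r^2w^2\big)+\varphi_{z\bar z}(a)r^2|w|^2+R$, with $|R|\le Cr^3$ uniformly on $\overline{\Delta(0;1)}$ for all small $r$ (the third derivatives of $\varphi$ being bounded near $a$). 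The pluriharmonic part equals $|p(w)|^2$ for the zero-free factor $p(w):=\exp\!\big(-\varphi_z(a)rw-\tfrac12\varphi_{zz}(a)r^2w^2\big)$, $p(0)=1$; replacing $h$ by $h/p$ turns the infimum into $\tfrac1\pi\inf_{h(0)=1}\int_{\Delta(0;1)}|h|^2e^{-\varphi_{z\bar z}(a)r^2|w|^2-R}\,\dl(w)$. Bounding the weight below by $e^{-Cr^3}e^{-sr^2|w|^2}$ with $s:=\varphi_{z\bar z}(a)$ and then minimizing the simpler radial-weight functional,
\[
L_\varphi(a,r)\ \ge\ \tfrac1\pi\,e^{-Cr^3}\inf_{h(0)=1}\int_{\Delta(0;1)}|h(w)|^2e^{-sr^2|w|^2}\,\dl(w).
\]
The last infimum is the reciprocal of the weighted Bergman kernel at $0$ for the radial weight $e^{-sr^2|w|^2}$ on the unit disk; expanding in the orthogonal monomial basis $\{w^k\}_{k\ge0}$ only $k=0$ contributes, and a direct integration yields exactly $\pi\,\tfrac{1-e^{-sr^2}}{sr^2}\ge\pi\big(1-\tfrac12 sr^2\big)$, the inequality being the elementary $\tfrac{1-e^{-t}}{t}\ge1-\tfrac t2$ (valid for all real $t$). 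Hence $L_\varphi(a,r)\ge e^{-Cr^3}\big(1-\tfrac12\varphi_{z\bar z}(a)r^2\big)=1-\tfrac12\varphi_{z\bar z}(a)r^2+O(r^3)$; note the sign of $\varphi_{z\bar z}(a)$ is irrelevant here.

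The one genuinely delicate point — the main obstacle — is the uniformity of the error $O(r^3)$ inside the infimum, namely that passing from the true weight $e^{-(\varphi(a+rw)-\varphi(a))}$ to the model weight $e^{-\varphi_{z\bar z}(a)r^2|w|^2}$ changes $L_\varphi(a,r)$ by a factor $1+O(r^3)$ uniformly, with no control on the minimizer. This is precisely why, after normalizing out the pluriharmonic factor, one first replaces the weight by the pointwise lower bound on the fixed disk $\overline{\Delta(0;1)}$ — where the uniform Taylor estimate $|R|\le Cr^3$ is all that is needed — and only then performs the (now exactly solvable) minimization. Everything else is routine one-variable computation.
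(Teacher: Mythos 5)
Your proof is correct, and it takes a genuinely different route from the paper's. The paper extracts the minimizer $f_r$ from Lemma~\ref{lem:attain}, introduces the auxiliary weight $\varphi_{a,\varepsilon}(z)=\tfrac{1}{2}\varphi(z)-(g_a(0)-\varepsilon)|z-a|^2$, runs a Cauchy--Schwarz estimate on $\int|f_r|e^{-\varphi_{a,\varepsilon}}$ combined with Lemma~\ref{lem:taylortenkai}, then invokes Jensen's inequality and the plurisubharmonicity of $\log|f_r|$ to derive a sub-mean-value inequality for $\varphi_{a,\varepsilon}$, and finally Taylor-expands $\varphi_{a,\varepsilon}$ at $a$; the degenerate case $g_a(0)=0$ is handled separately via Theorem~\ref{thm:minimalextensionproperty}. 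You instead prove the sharp two-sided asymptotic $L_\varphi(a,r)\ge 1-\tfrac12\varphi_{z\bar z}(a)r^2+O(r^3)$ directly: rescale to the unit disk, absorb the pluriharmonic part of the Taylor expansion of $\varphi$ into the competitor $h$ via the zero-free factor $p$, replace the weight by the Gaussian model at the cost of a uniform $e^{-Cr^3}$ factor, and solve the radial minimization exactly, reproducing the same $\tfrac{1-e^{-t}}{t}$ that appears in Example~\ref{ex:optimalexample}. Your argument avoids Cauchy--Schwarz, Jensen, and the case split, never needs the existence of a minimizer (Lemma~\ref{lem:attain}), and produces more information, namely the precise second-order asymptotics of $L_\varphi(a,\cdot)$ at $r=0$. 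What the paper's approach buys in return is a cleaner passage to the vector-bundle analogue (Theorem~\ref{mainthm:koujigen}), where the duality/Jensen argument transfers naturally, while your pluriharmonic-factor trick does not transport as directly to the higher-rank setting.
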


If we take $g_a\equiv 0$, this theorem corresponds to the minimal extension property. 
As a very special case, it holds that $\ai\ddbar\varphi\geq 2c\omega$ if $L_\varphi(a,r)\leq e^{-cr^2}<1$ on $\Omega_\delta$ for $(a,r)\in \Omega_\delta$ and a positive constant $c> 0$, which is one answer to Question \ref{mainque:sharper}. 
%Since $g$ can take negative values, 
%The following type estimate also holds: $\ai\ddbar\varphi \geq -c\omega$ if $L_\varphi(a,r)\leq e^{cr^2}$ for $c\geq 0$.
We can also prove that this estimate is best possible in suitable sense (see Corollary \ref{cor:douchi} and \ref{cor:douchi2}).
%See Section \ref{prelim:subsec:1jigen} for basic properties regarding $L^2$-extension indices.
As an application, we can discuss a quantification of Pr\'ekopa's theorem in Section \ref{sec:1jigen} (see Theorem \ref{thm:prekopateiryouka}). 

% We apply the main theorem above to the study of Pr\'ekopa-type theorems.
% We establish the following. 

% \begin{theorem}\label{mainthm:prekopadouchi}
% 	Let $\varphi:U_\tau\times \Omega_z\to \R$ be a function, which is smooth on $\overline{U\times \Omega}$.
% 	Here $U_\tau\subset\C_\tau$ and $\Omega_z\subset \C^n_z$ are bounded domains. We define the function $\Phi$ on U by 
% 	$$
% 	e^{-\Phi(\tau)}:=\int_{z\in\Omega}e^{-\varphi(\tau,z)}.
% 	$$
% 	Let $g$ be a non-negative function on $U$ and $\omega=\ai \dtau\wedge \dbartau$. 
% 	Then the following properties are equivalent: 
% 	\begin{enumerate}
% 		\item $\ai \ddbar \Phi\geq g\omega$.
% 		\item For any point $a\in U$ and any $\varepsilon>0$, there exists $r_\varepsilon\in (0,\delta(a))$ such that 
% 		$$
% 		L_\Phi(a, r)\leq e^{-\max\{ \frac{(g(a)-\varepsilon)}{2}, 0\}r^2}
% 		$$
% 		for $r\in (0, r_\varepsilon)$. 
% 	\end{enumerate}
% \end{theorem}

% For Pr\'ekopa's theorem, see Theorem \ref{thm:BerndtssonPrekopa} in Section \ref{sec:1jigen}. 
% %In this article, we follow Berndtsson's formulation. 
% %Let us consider the case that $\Omega$ and $\varphi$ satisfy the conditions in Theorem \ref{thm:BerndtssonPrekopa}.
% This theorem is a quantitative version of a Pr\'ekopa-type theorem.
% Indeed, by taking $g\equiv 0$, we can recover Berndtsson's generalization of Pr\'ekopa's theorem.
% For detailed discussions, see the argument below Theorem \ref{thm:BerndtssonPrekopa}. 
%

%
We can also establish a higher rank analogue of Theorem \ref{mainthm:1jigen} as follows.

\begin{theorem}\label{mainthm:koujigen}
	Let $\Omega$ be a bounded domain in $\C$, $E\to \Omega$ be a holomorphic vector bundle of rank $r$ and $h$ be a smooth Hermitian metric on $E$. 
	Assume that for any point $a\in \Omega$, there exist $\gamma_a\in(0, \delta(a))$ and a semi-positive lower semi-continuous function $g_a:[0, \gamma_a]\to \R_{\geq 0}$ such that $L_h(a,r, \xi)\leq e^{-g_a(r)r^2}$ for any $\xi\in E_a\sasyugo\!\{0\}$ and any $r\in(0,\gamma_a)$. 
	Then 
	$$
	\ai\Theta_h(a)\geq 2g_a(0)\omega\otimes \Id_E,
	$$
	where $\ai\Theta_h$ is the Chern curvature of $(E,h)$. 
\end{theorem}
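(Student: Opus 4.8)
The plan is to localise at each point of $\Omega$ and carry out the one-dimensional computation behind Theorem \ref{mainthm:1jigen}, with scalars replaced by vectors and $\ai\ddbar\varphi$ by the Chern curvature matrix. Fix $a\in\Omega$ and, translating, assume $a=0$. A disc being Stein and contractible, $E$ is holomorphically trivial on some $\Delta(0;\rho_0)$ with $\overline{\Delta(0;\rho_0)}\subset\Omega$; after a constant linear change of frame followed by a holomorphic quadratic one, I may assume the frame is normal at $0$, i.e.\ the matrix $H$ of $h$ in it satisfies
$$
H(z)=\Id+C|z|^{2}+R(z),\qquad C=C^{*},\qquad \|R(z)\|\le M|z|^{3}\quad(|z|\le\rho_0),
$$
where $M$ bounds the third derivatives of $H$ on $\overline{\Delta(0;\rho_0)}$. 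Computing $\ai\Theta_h=\ai\dbar(H^{-1}\partial H)$ at $0$ in this frame yields $\ai\Theta_h(0)=-C\,\omega$ (for a line bundle $h=e^{-\varphi}$ this reads $C=-\varphi_{z\bar z}(0)$, consistent with Theorem \ref{mainthm:1jigen}). Hence the claimed inequality $\ai\Theta_h(0)\ge 2g_0(0)\,\omega\otimes\Id_E$ is equivalent to $v^{*}Cv\le -2g_0(0)$ for every unit vector $v\in\C^{r}$.

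Next I would compute the $r\to 0$ asymptotics of $L_h(0,r,\xi)$. Fix $\xi\in E_0\sasyugo\!\{0\}$; normalising, take $|\xi|_{h(0)}=1$ and let $v\in\C^{r}$ be its coordinate vector, so $|v|=1$. By the vector-bundle analogue of the identity following Definition \ref{intro:l2index}, $\pi r^{2}L_h(0,r,\xi)$ is the infimum of $\int_{\Delta(0;r)}f^{*}Hf\,\dl$ over holomorphic maps $f=\sum_{k\ge 0}z^{k}w_k$ with $w_0=v$. The constant section $f\equiv v$ gives the upper bound $L_h(0,r,\xi)\le 1+\tfrac{r^{2}}{2}(v^{*}Cv)+O(r^{3})$, using $\int_{\Delta(0;r)}|z|^{2}\,\dl=\tfrac{\pi r^{4}}{2}$. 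For the matching lower bound, decompose $\int f^{*}Hf=\int|f|^{2}+\int|z|^{2}f^{*}Cf+\int f^{*}Rf$; by orthogonality of the monomials $z^{k}$ on the disc,
$$
\int_{\Delta(0;r)}\!|f|^{2}\,\dl+\int_{\Delta(0;r)}\!|z|^{2}f^{*}Cf\,\dl\ \ge\ \pi r^{2}+\tfrac{\pi r^{4}}{2}(v^{*}Cv)+\sum_{k\ge 1}|w_k|^{2}\,\pi r^{2k+2}\Bigl(\tfrac{1}{k+1}-\tfrac{\|C\|\,r^{2}}{k+2}\Bigr),
$$
and every bracket is nonnegative once $r^{2}\le 1/(2\|C\|)$. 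For the remainder: $H\ge\tfrac12\Id$ on $\Delta(0;r)$ for $r$ small, so either $\int f^{*}Hf\,\dl>2\pi r^{2}$, in which case the lower bound below is trivial for small $r$, or $\int|f|^{2}\,\dl\le 4\pi r^{2}$, whence $\bigl|\int f^{*}Rf\,\dl\bigr|\le Mr^{3}\int|f|^{2}\,\dl=O(r^{5})$, uniformly in $f$. Altogether $\pi r^{2}L_h(0,r,\xi)\ge\pi r^{2}+\tfrac{\pi r^{4}}{2}(v^{*}Cv)-o(r^{4})$, i.e.
$$
L_h(0,r,\xi)=1+\tfrac{r^{2}}{2}(v^{*}Cv)+o(r^{2})\qquad(r\to 0).
$$

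To conclude, feed this into the hypothesis $L_h(0,r,\xi)\le e^{-g_0(r)r^{2}}$. Since the left-hand side tends to $1$ and $g_0\ge 0$, squeezing forces $g_0(r)r^{2}\to 0$, so $e^{-g_0(r)r^{2}}=1-g_0(r)r^{2}\bigl(1-o(1)\bigr)$; inserting the lower bound on $L_h$ gives $g_0(r)\bigl(1-o(1)\bigr)\le -\tfrac12(v^{*}Cv)+o(1)$, hence $g_0(r)\le -\tfrac12(v^{*}Cv)+o(1)$ for all small $r>0$. Letting $r\to 0$ and using lower semicontinuity of $g_0$ at $0$ yields $g_0(0)\le\liminf_{r\to 0}g_0(r)\le -\tfrac12(v^{*}Cv)$, i.e.\ $v^{*}Cv\le -2g_0(0)$. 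As $v$ ranges over unit vectors this gives $-C\ge 2g_0(0)\Id$, that is $\ai\Theta_h(0)\ge 2g_0(0)\,\omega\otimes\Id_E$; and $a$ was arbitrary. I expect the uniform-in-$f$ lower bound in the second step to be the main obstacle: one must rule out that some finely tuned higher-order section outperforms the constant section by more than $o(r^{2})$, which is precisely where orthogonality of the monomials and the cubic bound on $R$ are used. The rest transcribes the one-dimensional argument; the only delicate point in the last step is to keep the factor $1-o(1)$ intact rather than estimating it crudely, so as not to forfeit the sharp constant $2$.
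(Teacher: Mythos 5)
Your argument is correct and follows a genuinely different route from the paper's. The paper proves Theorem \ref{mainthm:koujigen} by introducing the auxiliary metric $\widetilde{h}=he^{2(g_a(0)-\varepsilon)|z-a|^2}$ and showing that $\widetilde{h}$ is Griffiths semi-positive: for every local holomorphic section $u$ of $E^\star$ it proves that $\log|u|_{\widetilde{h}^\star}$ is psh, using the duality inequality $|u|_{\widetilde{h}^\star}\geq |\langle u,s_r\rangle|/|s_r|_{\widetilde{h}}$ together with the optimal $L^2$-extension theorem applied to the scalar weight $\log|\langle u,s_r\rangle|$, and then feeding the resulting inequality into the Jensen/Taylor-expansion mean-value machinery from the proof of Theorem \ref{mainthm:1jigen} (this is the DWZZ-style reduction the paper explicitly credits). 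You instead work in a normal holomorphic frame and compute the small-$r$ asymptotic expansion $L_h(0,r,\xi)=1+\tfrac{r^2}{2}v^{*}Cv+O(r^{3})$ directly, using orthogonality of the monomials $z^k$ on the disc to get a matching lower bound uniform in the competing sections, and then extract $v^{*}Cv\leq -2g_0(0)$ from the hypothesis by lower semicontinuity. Your approach buys an explicit two-term asymptotic for the $L^2$-extension index (strictly more information than the paper extracts, and it makes the optimality of the constant $2$ visible at a glance), is fully self-contained, and avoids both the auxiliary-metric trick and the reduction to the scalar optimal extension theorem; the paper's approach is less computational, closer to the formalism already set up for the rank-one case, and perhaps more robust toward generalizations where an explicit normal-frame expansion is unavailable. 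The one step you flagged as the likely obstruction — the uniform-in-$f$ lower bound — is indeed handled correctly by the dichotomy $\int f^{*}Hf>2\pi r^{2}$ versus $\int|f|^{2}\leq 4\pi r^{2}$, which makes the remainder estimate $|\int f^{*}Rf|=O(r^{5})$ legitimate.
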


For the definition of $L_h(a,r, \xi)$, see Definition \ref{def:l2indexvector}. 
As an application, we study the relationship between the positivity of direct image sheaves and the sharper estimate.
	Let $\pi:\mathcal{X}\to B$ be a proper holomorphic submersion, where $\mathcal{X}$ is a K\"ahler manifold of dimension $n+1$ and $B$ is a bounded domain in $\C$. 
We also let $\mathcal{L}\to \mathcal{X}$ be a line bundle over $\mathcal{X}$ with a smooth Hermitian metric $h$. 
Assume that $\dim H^0(X_\tau, K_{X_\tau}\otimes L_\tau)$ has the same dimension for $\tau\in B$, where $X_\tau=\pi^{-1}(\tau)$ and $L_\tau=\mathcal{L}|_{X_\tau}$. 
Set $E_\tau:= H^0(X_\tau, K_{X_\tau}\otimes L_\tau)$.
Then 
$$
E:= \bigcup_{\tau\in B}\{ \tau\}\times E_\tau
$$
admits a structure of a holomorphic vector bundle and can be identified with the direct image bundle $\pi_\star(K_{\mathcal{X}/B}\otimes \mathcal{L})$, where $K_{\mathcal{X}/B}$ is the relative canonical bundle. 
Under this isomorphism, $\pi_\star(K_{\mathcal{X}/B}\otimes \mathcal{L})$ has a canonical Hermitian metric $H$ induced by $h$ as follows. 
For $\tau\in B$ and $u\in E_\tau=H^0(X_\tau, K_{X_\tau}\otimes L_\tau)\cong H^{n,0}(X_\tau, L_\tau)$, 
$$
|u|^2_{H_\tau}:=\int_{X_\tau}c_n h_\tau u\wedge \bar{u}, 
$$
where $c_n=\sqrt{-1}^{n^2}$. 
Here we denote by $h_\tau$ the restriction of $h$ to $L_\tau$. 
In this setting, we have the following theorem.

\begin{theorem}\label{mainthm:directimage}
Keep the notation above. 
Assume that $H^0(X_\tau, K_{X_\tau}\otimes L_\tau)$ has the same dimension for each $\tau\in B$.
Let $g\colon B\to \R_{\geq 0}$ be a semi-positive function on $B$. 
	Then the following properties are equivalent$\colon$
	\begin{enumerate}
		\item the Chern curvature $\ai\Theta_H$ of the direct image bundle $\pi_\star(K_{\mathcal{X}/B}\otimes \mathcal{L})=E$ is positively curved such as $\ai\Theta_H\geq g\omega \otimes \Id_{\pi_\star(K_{\mathcal{X}/B}\otimes \mathcal{L})}$. 
		\item for any point $a\in B$, any $\xi\in E_a= H^0(X_a, K_{X_a}\otimes L_a)$ and any $\varepsilon>0$, there exists $r_\varepsilon\in (0, \delta(a))$ such that for any $r\in(0,r_\varepsilon)$, there exists a holomorphic section $s\in H^0(\pi^{-1}(\Delta(a;r)), K_\mathcal{X}\otimes \mathcal{L})$ such that $s|_{X_a}=\xi\wedge \dtau$ and 
		$$
		\int_{\pi^{-1}(\Delta(a;r))}c_{n+1}hs\wedge \bar{s} \leq e^{-\max\{ \frac{(g(a)-\varepsilon)}{2}, 0\} r^2}\pi r^2 \int_{X_a}c_n h_a\xi\wedge \bar{\xi}. 
		$$
	\end{enumerate}
\end{theorem}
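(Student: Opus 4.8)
The plan is to translate both conditions into statements about the $L^2$-extension index $L_H$ (Definition~\ref{def:l2indexvector}) of the $L^2$-metric $H$ on $E=\pi_\star(K_{\mathcal{X}/B}\otimes\mathcal{L})$, and then to play Theorem~\ref{mainthm:koujigen} against its converse. The bridge is a Fubini-type identification, which I would establish first. Fix $a\in B$ and $r\in(0,\delta(a))$. On $\pi^{-1}(\Delta(a;r))$ the bundle $\pi^{*}K_{B}$ is trivialized by $\dtau$, and the canonical isomorphism $K_{\mathcal{X}}\otimes\mathcal{L}\cong(K_{\mathcal{X}/B}\otimes\mathcal{L})\otimes\pi^{*}K_{B}$ together with $H^{0}(\pi^{-1}(\Delta(a;r)),K_{\mathcal{X}/B}\otimes\mathcal{L})=H^{0}(\Delta(a;r),E)$ identifies
$$
H^{0}\!\big(\pi^{-1}(\Delta(a;r)),\,K_{\mathcal{X}}\otimes\mathcal{L}\big)\;\cong\;H^{0}\!\big(\Delta(a;r),\,E\big),
$$
a section $s$ corresponding to the unique $F\in H^{0}(\Delta(a;r),E)$ with $s=F\wedge\dtau$ under this trivialization; here $F(\tau)$ is the restriction to $X_{\tau}$ of $s\otimes\dtau^{-1}$, an element of $H^{0}(X_{\tau},K_{X_{\tau}}\otimes L_{\tau})=E_{\tau}$. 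Along every fibre one has the pointwise identity $c_{n+1}\,h\,s\wedge\bar{s}=\big(c_{n}\,h_{\tau}\,F(\tau)\wedge\overline{F(\tau)}\big)\wedge\ai\,\dtau\wedge\dbartau$, so Fubini gives
$$
\int_{\pi^{-1}(\Delta(a;r))}c_{n+1}\,h\,s\wedge\bar{s}\;=\;\int_{\Delta(a;r)}|F|^{2}_{H}\,\omega ,
$$
where $\omega=\ai\,\dtau\wedge\dbartau$ on $B$.

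Granting this, write $|\xi|^{2}_{H_{a}}=\int_{X_{a}}c_{n}h_{a}\xi\wedge\bar{\xi}$. Since $s|_{X_{a}}=\xi\wedge\dtau$ corresponds to $F(a)=\xi$, and since the infimum defining $L_{H}(a,r,\xi)$ is realized by the Bergman-minimal holomorphic section of $E$ through $\xi$ (a closed affine condition in a Hilbert space), the displayed identity shows that, for fixed $a$, $r$ and $\xi\ne0$, the existence of a section $s$ with $s|_{X_{a}}=\xi\wedge\dtau$ and $\int_{\pi^{-1}(\Delta(a;r))}c_{n+1}h\,s\wedge\bar{s}\le e^{-mr^{2}}\pi r^{2}|\xi|^{2}_{H_{a}}$ is \emph{equivalent} to $L_{H}(a,r,\xi)\le e^{-mr^{2}}$. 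Hence condition~(2) is equivalent to: for every $a\in B$, every $\xi\in E_{a}\sasyugo\{0\}$ and every $\varepsilon>0$ there is $r_{\varepsilon}\in(0,\delta(a))$ with $L_{H}(a,r,\xi)\le e^{-\max\{(g(a)-\varepsilon)/2,\,0\}\,r^{2}}$ for all $r\in(0,r_{\varepsilon})$. For $(2)\Rightarrow(1)$ I would then apply Theorem~\ref{mainthm:koujigen}: fixing $a$ and $\varepsilon>0$, the homogeneity of $L_{H}$ in $\xi$ and continuity of $L_{H}(a,r,\cdot)$ on the compact space $\mathbb{P}(E_{a})$ let one take $r_{\varepsilon}$ independent of $\xi$, so with $\gamma_{a}:=r_{\varepsilon}$ and $g_{a}\equiv\max\{(g(a)-\varepsilon)/2,0\}$ (constant, hence lower semi-continuous and semi-positive) the hypothesis of Theorem~\ref{mainthm:koujigen} is met; it yields $\ai\Theta_{H}(a)\ge2g_{a}(0)\,\omega\otimes\Id_{E}=\max\{g(a)-\varepsilon,0\}\,\omega\otimes\Id_{E}\ge(g(a)-\varepsilon)\,\omega\otimes\Id_{E}$, and letting $\varepsilon\downarrow0$ gives~(1).

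For the converse $(1)\Rightarrow(2)$, assume $\ai\Theta_{H}\ge g\,\omega\otimes\Id_{E}$; by the reformulation above it suffices to prove $L_{H}(a,r,\xi)\le e^{-\max\{(g(a)-\varepsilon)/2,0\}r^{2}}$ for every $a$, every $\xi\ne0$, every $\varepsilon>0$ and all sufficiently small $r$. I would work in a holomorphic frame of $E$ near $a$ normalized so that $H(a)=\Id$ and $dH(a)=0$, in which the curvature hypothesis becomes $-\ai\,\ddbar H(a)\ge g(a)\,\omega\otimes\Id_{E}$; taking $F$ to be the Bergman-minimal extension of $\xi$ over $\Delta(a;r)$ and estimating $\int_{\Delta(a;r)}|F|^{2}_{H}$ from above using this Hessian inequality produces the asymptotics $1/L_{H}(a,r,\xi)\ge1+\tfrac{g(a)}{2}r^{2}+o(r^{2})$ as $r\to0$, uniformly in $[\xi]\in\mathbb{P}(E_{a})$, which gives the claimed estimate once the $o(r^{2})$ term is absorbed into $\varepsilon$. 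Since $g\ge0$ the bundle $(E,H)$ is Griffiths semi-positive, so $L_{H}\le1$ always and the truncation $\max\{\cdot,0\}$ is the right normalization. Alternatively this direction may be quoted directly from the ``best possible'' companions of Theorem~\ref{mainthm:koujigen} (Corollaries~\ref{cor:douchi}, \ref{cor:douchi2}) combined with the Fubini identification.

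The main obstacle is the implication $(1)\Rightarrow(2)$: one must pass from a pointwise (or merely continuous) curvature lower bound at $a$ to a genuine estimate on a small disk, getting simultaneously the exact flat prefactor $\pi r^{2}$ and the exact exponential gain $e^{-\max\{(g(a)-\varepsilon)/2,0\}r^{2}}$, with the $o(r^{2})$ error controlled uniformly in the direction $\xi\in E_{a}$; this last requirement is exactly what forces the $\varepsilon$-and-$r_{\varepsilon}$ formulation of (2), and pinning down the sharp constant $\pi$ rather than a larger universal constant is what makes the optimal/sharper $L^2$-extension machinery, rather than a soft argument, indispensable. By contrast, once the Fubini identification is available, $(2)\Rightarrow(1)$ is an immediate application of Theorem~\ref{mainthm:koujigen}.
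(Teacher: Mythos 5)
Your proposal matches the paper's argument essentially step for step: you set up the same Fubini-type identification $\int_{\pi^{-1}(\Delta(a;r))}c_{n+1}h\,s\wedge\bar{s}=\int_{\Delta(a;r)}|u|^2_H$ (where $s=u\wedge\dtau$, $u\in H^0(\Delta(a;r),E)$), translate condition (2) into the statement $L_H(a,r,\xi)\leq e^{-\max\{(g(a)-\varepsilon)/2,0\}r^2}$, and then invoke the vector-bundle equivalence between that index bound and $\ai\Theta_H(a)\geq g(a)\omega\otimes\Id_E$ — which is exactly Corollary~\ref{cor:douchivector}, the higher-rank companion you anticipated. The one place you do more work than the paper is the $(2)\Rightarrow(1)$ direction, where you build a compactness/continuity argument on $\mathbb{P}(E_a)$ to make $r_\varepsilon$ uniform in $\xi$ before feeding the hypothesis into Theorem~\ref{mainthm:koujigen}; the paper simply cites Corollary~\ref{cor:douchivector}, whose statement already allows $r_\varepsilon$ to depend on $\xi$ (since the curvature conclusion there is a pointwise derivative at $a$, one direction $\xi$ at a time, so the uniformity in $\xi$ is not actually needed). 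Your extra step is harmless but unnecessary.
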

The correspondence is considered to provide a new direction for the study of positivity of direct images.

Finally, we see that the $L^2$-extension index leads to a new and interesting characterization of (pluri)harmonic functions. 
On a one-dimensional domain, as is well known, a subharmonic function is characterized by the mean value inequality, and when the equality of the inequality holds, the function becomes a harmonic function. 
As discussed above, the (pluri)subharmonicity of the weight can be characterized by the inequality part of the optimal Ohsawa--Takegoshi $L^2$-extension theorem.
Therefore, based on the analogy with the above, the following natural question arises:

\begin{question}\label{que:harmonic}
Are harmonic functions characterized by the equality part of the optimal Ohsawa--Takegoshi $L^2$-extension theorem?
\end{question} 

We can give an affirmative answer to the above question under an appropriate formulation. 
Indeed, we get the following result. 

\begin{theorem}\label{mainthm:harmoniccharacterization}
	Let $\varphi$ be a smooth function on a domain $\Omega\subset \C$. 
	Then the following are equivalent$\colon$
	\begin{enumerate}
		\item $\ai \ddbar  \varphi=0$. 
		\item $L_\varphi= 1$, that is, for any $a\in \Omega$ and $r\in (0,\delta(a))$, there exists a \textbf{\textit{unique}} holomorphic function $f$ on $\Delta(a;r)$ satisfying $f(a)=1$ and 
		\[
		 \int _{\Delta(a;r)}|f|^2e^{-\varphi} \leq \pi r^2 e^{-\varphi(a)}. 
		\]
	\end{enumerate}
\end{theorem}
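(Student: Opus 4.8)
The plan is to prove the two implications separately, in both cases reducing to the model computation of the (weighted) Bergman kernel of a disc at its centre; this is essentially the same mechanism that underlies the minimal extension property and Theorem \ref{mainthm:1jigen}.

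For $(1)\Rightarrow(2)$, fix $a\in\Omega$ and $r\in(0,\delta(a))$; since $r<\delta(a)$ the closure $\overline{\Delta(a;r)}$ is a compact subset of $\Omega$ on which $\varphi$ is smooth, so $A^2(\Delta(a;r),\varphi)$ is a Hilbert space of holomorphic functions with bounded point evaluations. As $\ai\ddbar\varphi=0$, $\varphi$ is harmonic on the simply connected disc $\Delta(a;r)$, hence $\varphi=2\Rea\psi$ for some $\psi\in\mathcal{O}(\Delta(a;r))$, and $u:=e^{-\psi}$ is a zero-free holomorphic function with $|u|^2=e^{-\varphi}$. The map $f\mapsto uf$ is then a unitary isomorphism from $A^2(\Delta(a;r),\varphi)$ onto the unweighted Bergman space $A^2(\Delta(a;r))$, under which the extremal problem defining $L_\varphi(a,r)$ becomes that of minimizing $\|g\|^2_{L^2(\Delta(a;r))}$ over $g(a)=u(a)$. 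The value of the latter is $|u(a)|^2/K_{\Delta(a;r)}(a,a)=\pi r^2e^{-\varphi(a)}$, because the unweighted Bergman kernel of a disc at its centre equals $1/(\pi r^2)$ and $K_{\Delta(a;r)}(\cdot,a)$ is constant, so the unique minimizer is the constant $u(a)$. Transporting back, $L_\varphi(a,r)=1$ with extremal function $f_{a,r}(z)=e^{\psi(z)-\psi(a)}$, and $f_{a,r}$ is orthogonal in $A^2(\Delta(a;r),\varphi)$ to the closed subspace $\{g:g(a)=0\}$ (equivalently, the constant $u(a)$ is $L^2$-orthogonal to $\{g:g(a)=0\}$ by the mean value property). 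Hence for any $f$ with $f(a)=1$ one has $\int_{\Delta(a;r)}|f|^2e^{-\varphi}=\int_{\Delta(a;r)}|f_{a,r}|^2e^{-\varphi}+\|f-f_{a,r}\|^2_\varphi=\pi r^2e^{-\varphi(a)}+\|f-f_{a,r}\|^2_\varphi$, so the inequality in (2) forces $f=f_{a,r}$, giving both existence and uniqueness.

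For $(2)\Rightarrow(1)$, note first that the existence part of (2) says exactly that $L_\varphi(a,r)\le1$ for all $(a,r)\in\Omega_\delta$, so Theorem \ref{mainthm:1jigen} applied with $g_a\equiv0$ yields $\ai\ddbar\varphi\ge0$; writing $c_a:=\partial_z\partial_{\bar z}\varphi(a)$ we have $c_a\ge0$ on $\Omega$ and $\ai\ddbar\varphi(a)=c_a\,\omega$, so it remains to exclude $c_{a_0}>0$. Suppose $c_{a_0}>0$. Removing the pluriharmonic part of the $2$-jet of $\varphi$ at $a_0$, write $\varphi(z)=\varphi(a_0)+2\Rea(b(z-a_0)+\tfrac a2(z-a_0)^2)+c_{a_0}|z-a_0|^2+R(z)$ with $R(z)=O(|z-a_0|^3)$, and take the entire function $f_0(z):=\exp(b(z-a_0)+\tfrac a2(z-a_0)^2)$, which satisfies $f_0(a_0)=1$ and $|f_0(z)|^2e^{-\varphi(z)}=e^{-\varphi(a_0)}e^{-c_{a_0}|z-a_0|^2-R(z)}$. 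A direct expansion then gives
\[
\int_{\Delta(a_0;r)}|f_0|^2e^{-\varphi}=e^{-\varphi(a_0)}\left(\pi r^2-\tfrac{\pi c_{a_0}}{2}r^4+O(r^5)\right),
\]
so $L_\varphi(a_0,r)\le 1-\tfrac{c_{a_0}}{2}r^2+O(r^3)<1$ for all sufficiently small $r$. But $L_\varphi(a_0,r)<1$ contradicts the uniqueness in (2): the Hilbert-space minimizer $f_{a_0,r}$ of $\int_{\Delta(a_0;r)}|\cdot|^2e^{-\varphi}$ over $\{f(a_0)=1\}$ has $\int_{\Delta(a_0;r)}|f_{a_0,r}|^2e^{-\varphi}=L_\varphi(a_0,r)\,\pi r^2e^{-\varphi(a_0)}<\pi r^2e^{-\varphi(a_0)}$, and by the same Pythagorean identity the set $\{f_{a_0,r}+h: h(a_0)=0,\ \|h\|_\varphi^2\le \pi r^2e^{-\varphi(a_0)}-\int_{\Delta(a_0;r)}|f_{a_0,r}|^2e^{-\varphi}\}$ of admissible functions is a ball of positive radius in an infinite-dimensional space, hence contains infinitely many elements. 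Therefore $c_a\equiv0$ on $\Omega$, i.e. $\ai\ddbar\varphi=0$.

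The only genuinely technical point is the asymptotic estimate above: one must verify that the remainder $R$, together with the error in $e^{-R}=1+O(|z-a_0|^3)$ on $\Delta(a_0;r)$, contributes only at order $o(r^4)$ to $\int_{\Delta(a_0;r)}|f_0|^2e^{-\varphi}$, so that the term $-\tfrac{\pi c_{a_0}}{2}r^4$ produced by the strict positivity $c_{a_0}>0$ dominates for small $r$; this is where smoothness of $\varphi$ is used. The remaining ingredients — the multiplication-by-$u$ isometry, the value $1/(\pi r^2)$ and constancy of the disc Bergman kernel at the centre, and the Pythagorean bookkeeping relating the size of $L_\varphi$ to uniqueness of the extremal function — are elementary.
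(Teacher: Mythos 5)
Your proof is correct, and the overall strategy matches the paper's: show $L_\varphi=1$ when $\varphi$ is harmonic, then show that strict positivity of $\ai\ddbar\varphi$ at some point forces $L_\varphi<1$ somewhere, contradicting uniqueness. The differences are in the technical execution and are worth noting. For $(1)\Rightarrow(2)$, the paper derives $L_\varphi\le1$ by appealing to the optimal $L^2$-extension theorem (B\l ocki, Guan--Zhou) applied to the subharmonic function $\varphi$, and derives $L_\varphi\ge1$ from the mean value inequality for the subharmonic function $|g e^{-h}|^2$ where $\varphi=2\Rea h$; you instead push forward by the unitary isomorphism $f\mapsto e^{-\psi}f$ onto the \emph{unweighted} disc Bergman space and read off $L_\varphi(a,r)=1$, the explicit extremal $f_{a,r}=e^{\psi-\psi(a)}$, and the uniqueness all at once from the constancy of the disc Bergman kernel $K_{\Delta(a;r)}(\cdot,a)\equiv 1/(\pi r^2)$. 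Your route is more elementary and self-contained (it avoids invoking the optimal extension theorem for this implication) and it makes the Pythagorean mechanism behind uniqueness explicit, which the paper leaves as a remark (``cf.\ the argument in Section 3''). For $(2)\Rightarrow(1)$, the paper cites Corollary \ref{cor:douchi2} to produce $L_\varphi(a,r)\le e^{-cr^2/4}<1$ near a point of strict positivity; you instead recompute the needed strict inequality $L_\varphi(a_0,r)\le 1-\tfrac{c_{a_0}}{2}r^2+o(r^2)$ directly by removing the $2$-jet's pluriharmonic part with the explicit test function $f_0=\exp(b(z-a_0)+\tfrac a2(z-a_0)^2)$. This is the same idea that underlies the paper's proof of Corollary \ref{cor:douchi} (the paper's choice $f=e^{c(z-a)}$ removes only the degree-one pluriharmonic part, which suffices since the degree-two pluriharmonic part has vanishing $\ddbar$); your version is more direct but reproves a corollary the paper already established. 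One minor stylistic point: your remainder bound $O(r^5)$ is not sharp — the cubic term integrates to zero by angular symmetry, so the error is actually $O(r^6)$ — but $O(r^5)=o(r^4)$ is all you need, so this is harmless.
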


Pseudoconvexity is related to the richness of certain holomorphic functions (such as Stein manifolds).  
This theorem states that such a thing also holds with respect to subharmonicity. 
In fact, if there are many holomorphic functions that satisfy the optimal Ohsawa-Takegoshi condition, the metric becomes subharmonic, and if there is only one such holomorphic function, the metric becomes harmonic.
On an $n$-dimensional domain, we can also get such a characterization of pluriharmonicity (see Appendix \ref{sec:further}). 
As a higher rank analogue of Theorem \ref{mainthm:harmoniccharacterization}, we get a new characterization of curvature flatness as follows. 

\begin{theorem}\label{mainthm:flatness}
	Let $\Omega, E$ and $h$ be the same thing as in Theorem \ref{mainthm:koujigen}. 
	Then the following are equivalent$\colon$
	\begin{enumerate}
		\item $\ai \Theta_h =0$. 
		\item $L_h=1$, that is, for any $a\in \Omega, r\in (0,\delta(a))$ and $\xi\in E_a\sasyugo \{ 0\}$, there exists a \textbf{\textit{unique}} holomorphic section $s$ of $E$ on $\Delta(a;r)$ satisfying $s(a)=\xi$ and \[
			\int_{\Delta(a;r)} |s|^2_h \leq \pi r^2 |\xi|^2_{h(a)}. 
		\]
	\end{enumerate}
\end{theorem}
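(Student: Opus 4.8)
The plan is to establish the two implications separately. For $(1)\Rightarrow(2)$ I would use that a flat Hermitian holomorphic bundle has, on each disk, a parallel holomorphic frame in which the metric is the identity, which reduces everything to the elementary one-variable extremal identity; for $(2)\Rightarrow(1)$ I would feed the hypothesis into Theorem \ref{mainthm:koujigen} to obtain semi-positivity and then rule out strict positivity by a short ``sharper estimate'' computation.

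\emph{Direction $(1)\Rightarrow(2)$.} Assume $\ai\Theta_h=0$ on $\Omega$, fix $a\in\Omega$ and $r\in(0,\delta(a))$. Since $\Delta(a;r)\subset\Omega$ is simply connected and the Chern connection of $(E,h)$ is flat on it, there is a parallel frame $e_1,\dots,e_r$ on $\Delta(a;r)$; being parallel it is holomorphic (the $(0,1)$-part of the Chern connection is $\dbar$) and the matrix $\bigl(h(e_i,e_j)\bigr)$ is constant, so after a constant linear change of frame we may assume $h(e_i,e_j)\equiv\delta_{ij}$. Given $\xi\in E_a\sasyugo\{0\}$, write $\xi=\sum_i c_ie_i(a)$ and set $s:=\sum_ic_ie_i$; then $s(a)=\xi$ and $\int_{\Delta(a;r)}|s|^2_h=\pi r^2\sum_i|c_i|^2=\pi r^2|\xi|^2_{h(a)}$, giving existence of an optimal extension. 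For uniqueness, take $t=\sum_if_ie_i$ with $f_i\in\mathcal{O}(\Delta(a;r))$, $\int_{\Delta(a;r)}|t|^2_h<\infty$, $t(a)=\xi$ and $\int_{\Delta(a;r)}|t|^2_h\le\pi r^2|\xi|^2_{h(a)}$. Expanding each $f_i$ in powers of $(z-a)$ gives $\int_{\Delta(a;r)}|f_i|^2\ge\pi r^2|f_i(a)|^2$, with equality precisely when $f_i$ is constant, hence
\[
\pi r^2|\xi|^2_{h(a)}=\sum_i\pi r^2|f_i(a)|^2\le\sum_i\int_{\Delta(a;r)}|f_i|^2=\int_{\Delta(a;r)}|t|^2_h\le\pi r^2|\xi|^2_{h(a)}.
\]
Thus all inequalities are equalities, every $f_i$ is constant, and $t=s$.

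\emph{Direction $(2)\Rightarrow(1)$.} By $(2)$ we have $L_h(a,r,\xi)\le1$ for all admissible $(a,r,\xi)$, so Theorem \ref{mainthm:koujigen} with $g_a\equiv0$ gives $\ai\Theta_h\ge0$ on $\Omega$. Suppose $\ai\Theta_h\not\equiv0$; then at some $a_0\in\Omega$ we may write $\ai\Theta_h(a_0)=\Theta_0\,\omega$ with $\Theta_0\in\mathrm{End}(E_{a_0})$ Hermitian, positive semi-definite and nonzero, so $\langle\Theta_0\xi_0,\xi_0\rangle_{h(a_0)}>0$ for a suitable eigenvector $\xi_0\in E_{a_0}\sasyugo\{0\}$. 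In a normal holomorphic frame near $a_0$ (so that, writing $a_0=0$, $h(0)=\Id$ and $(\partial h)(0)=0$) the Taylor expansion of the metric reads
\[
\langle h(z)\xi_0,\xi_0\rangle=|\xi_0|^2_{h(0)}-\langle\Theta_0\xi_0,\xi_0\rangle_{h(0)}|z|^2+\Rea\bigl(\langle Q(z)\xi_0,\xi_0\rangle\bigr)+O(|z|^3)
\]
for a holomorphic quadratic polynomial $Q$. Integrating the constant section $s\equiv\xi_0$ over $\Delta(0;r)$, the $Q$-term and the cubic remainder contribute $0$ and $O(r^5)$, while $\int_{\Delta(0;r)}|z|^2=\tfrac{\pi}{2}r^4$, so
\[
\int_{\Delta(0;r)}|s|^2_h=\pi r^2|\xi_0|^2_{h(0)}-\tfrac{\pi}{2}r^4\langle\Theta_0\xi_0,\xi_0\rangle_{h(0)}+O(r^5)<\pi r^2|\xi_0|^2_{h(0)}
\]
for all small $r>0$; hence $L_h(a_0,r,\xi_0)<1$, contradicting $L_h=1$. (Concretely: since $r<\delta(a_0)$ forces $\overline{\Delta(a_0;r)}\subset\Omega$, every holomorphic section of $E$ on $\Delta(a_0;r)$ vanishing at $a_0$ has finite $h$-norm, so one can perturb the Bergman-extremal section for $(a_0,r,\xi_0)$ by a small such section and still satisfy the bound $\pi r^2|\xi_0|^2_{h(a_0)}$, violating the uniqueness in $(2)$.) Therefore $\ai\Theta_h\equiv0$.

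The step I expect to be the main obstacle is $(2)\Rightarrow(1)$, specifically turning pointwise \emph{strict} Griffiths positivity into a strictly-better-than-optimal $L^2$-bound: this requires the second-order normalization of $h$ and care with the curvature sign convention, and it uses that a single eigenvector of $\Theta_0$ already suffices to break optimality. The inputs for $(1)\Rightarrow(2)$ --- a parallel holomorphic frame in which $h$ is the identity on a simply connected disk, together with the one-variable extremal identity --- are routine.
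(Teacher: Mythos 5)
Your proof is correct. Both directions work, but the route is genuinely different from the paper's in one direction and more careful in the other, so a comparison is in order.

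For $(1)\Rightarrow(2)$ the paper argues in two steps of machinery: the optimal $L^2$-extension theorem for Nakano semi-positive bundles gives $L_h\leq 1$, while the curvature identity $\ai\ddbar|u|^2_h=-\langle\ai\Theta_h u,u\rangle_h+\ai\langle D'_hu,D'_hu\rangle_h$ shows $|u|^2_h$ is subharmonic under flatness, giving $L_h\geq 1$; uniqueness is then referred back to the Hilbert-space decomposition $A=H\oplus H^\perp$ from Section 3. You instead trivialize $(E,h)$ on the disc by a parallel (hence holomorphic) orthonormal frame, reducing both the extremal equality and the uniqueness to the scalar Bergman identity $\int_{\Delta(0;r)}|f|^2\geq\pi r^2|f(0)|^2$ with equality iff $f$ is constant. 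This is more elementary — it bypasses the Ohsawa--Takegoshi input entirely and makes the uniqueness statement transparent — at the cost of being slightly further from the theme of the paper, which is precisely to run everything through the $L^2$-extension index.

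For $(2)\Rightarrow(1)$ both you and the paper first feed $L_h\leq 1$ into the earlier results to get $\ai\Theta_h\geq 0$ and then derive a contradiction from nonvanishing curvature. The paper does this by citing Corollary \ref{cor:douchivector}, which requires a bound $\ai\Theta_h(a)\geq c\,\omega\otimes\Id_E$ with $c>0$, i.e.\ positive \emph{definiteness} at $a$; if $\ai\Theta_h$ is nonzero but everywhere degenerate (e.g.\ $h=\mathrm{diag}(e^{-|z|^2},1)$), such a $c$ does not exist and the cited corollary does not literally apply. You sidestep this by choosing a single eigenvector $\xi_0$ with positive eigenvalue and computing $\int_{\Delta(0;r)}|\xi_0|^2_h$ directly in a first-order normal frame, where the $z^2,\bar z^2$ terms integrate to zero and the $|z|^2$ term produces $-\tfrac{\pi}{2}r^4\langle\Theta_0\xi_0,\xi_0\rangle$, giving $L_h(a_0,r,\xi_0)<1$. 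This is the same Taylor-expansion idea that underlies the paper's Corollary \ref{cor:douchivector}, but applied per-direction rather than uniformly, which handles the degenerate case cleanly. One minor imprecision in your parenthetical remark: it is not true that every holomorphic section on $\Delta(a_0;r)$ vanishing at $a_0$ has finite $h$-norm (boundary blow-up is possible), but the remark is superfluous — the inequality $L_h(a_0,r,\xi_0)<1$ already contradicts $L_h\equiv 1$ without invoking uniqueness.
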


The organization of the manuscript is as follows.
In Section \ref{sec:prelim}, we recall positivity notions for vector bundles and basic facts about the optimal $L^2$-extension property.
In this section, we also introduce notions of $L^2$-extension indices for line bundles and vector bundles. 
In Section \ref{sec:1jigen}, we give a proof of Theorem \ref{mainthm:1jigen} and discuss Pr\'ekopa-type theorems.
In Section \ref{sec:koujigen}, we prove Theorem \ref{mainthm:koujigen} and \ref{mainthm:directimage}. 
In Section \ref{sec:harmonicity}, we establish new characterizations of harmonicity and curvature flatness, and prove Theorem \ref{mainthm:harmoniccharacterization} and \ref{mainthm:flatness}. 
At last, in Appendix \ref{sec:further}, we discuss a generalization of the $L^2$-extension index and related problems.

\vskip3mm
{\bf Acknowledgment. } 
The author would like to thank Genki Hosono for helpful comments. 
He also expresses gratitude to the reviewer for providing valuable advice to improve the paper.
He is supported by Japan Society for the Promotion of Science, Grant-in-Aid for Research Activity Start-up (Grant No. 21K20336) and Grant-in-Aid for Early-Career Scientists (Grant No. 23K12978).

\section{Preliminaries}\label{sec:prelim}
\subsection{Positivity notions for vector bundles}

In this subsection, let us recall basic positivity notions for vector bundles.
Let $X$ denote a complex manifold of $\dim n$, $E\to X$ be a holomorphic vector bundle of rank $r$ and $h$ be a smooth Hermitian metric on $E$. 
We denote by $\Theta_h$ the Chern curvature of $(E,h)$. 
Taking local coordinates $(z_1, \ldots, z_n)$ of $X$ and an orthonormal frame $(e_1, \ldots, e_r)$ of $E$ at some fixed point $x_0\in X$, we write 
$$
\ai\Theta_h=\sum_{\substack{1\leq j,k\leq n\\ 1\leq \lambda, \mu\leq r}}c_{jk\lambda\mu}\dz_j\wedge \dbarz_k\otimes e_\lambda^\star\otimes e_\mu.
$$
By using this expression, we can define the associated Hermitian form $\widetilde{\Theta}_h$ on $T_X\otimes E$ at $x_0$ by 
$$
\widetilde{\Theta}_h(\tau,\tau)=\sum_{\substack{1\leq j,k\leq n\\ 1\leq \lambda, \mu\leq r}}c_{jk\lambda\mu}\tau_{j\lambda}\bar{\tau}_{k\mu},
$$
for $\tau=\sum_{j,\lambda}\tau_{j\lambda}\frac{\partial}{\partial z_j}\otimes e_\lambda\in T_X\otimes E$, where $T_X$ is the holomorphic tangent bundle of $X$. 
Then $(E,h)$ is said to be {\it Nakano positive} (respectively, {\it Nakano semi-positive}) if $\widetilde{\Theta}_h(\tau,\tau)>0$ (respectively, $\widetilde{\Theta}_h(\tau,\tau)\geq 0$) for all non-zero elements $\tau\in T_X\otimes E$, and $(E,h)$ is said to be {\it Griffiths positive} (respectively, {\it Griffiths semi-positive}) if $\widetilde{\Theta}_h(v\otimes s,v\otimes s)>0$ (respectively, $\widetilde{\Theta}_h(v\otimes s,v\otimes s)\geq 0$) for all non-zero elements $v\in T_X$ and $s\in E$. Corresponding negativity is defined similarly. 
For two Hermitian forms $A, B$ on $T_X\otimes E$, we write $A\geq_{\Nak}B$ (respectively, $A\geq_{\Grif}B$) if $A(\tau,\tau)\geq B(\tau,\tau)$ (respectively, $A(v\otimes s, v\otimes s)\geq B(v\otimes s, v\otimes s)$) for all $\tau\in T_X\otimes E$ (respectively, $v\in T_X$ and $s\in E$). 
For two Hermitian metrics $h_A, h_B$ on $E$, we write $\ai\Theta_{h_A}\geq_{\Nak}\ai\Theta_{h_B}$ (respectively, $\ai\Theta_{h_A}\geq_{\Grif}\ai\Theta_{h_B}$) when the corresponding Hermitian forms $\widetilde{\Theta}_{h_A}$ and $\widetilde{\Theta}_{h_B}$ satisfy $\widetilde{\Theta}_{h_A}\geq_{\Nak}\widetilde{\Theta}_{h_B}$ (respectively, $\widetilde{\Theta}_{h_A}\geq_{\Grif}\widetilde{\Theta}_{h_B}$).

By definition, we clearly see that Nakano positivity is a stronger positivity notion that Griffiths positivity.
Note that, if $\dim X=1$ or rank $E=1$, these notions coincide. 
When $\dim X=1$, we just use the terminology ``(semi-)positively curved" and write $\ai\Theta_h\geq 0$, $\ai\Theta_{h_A}\geq \ai\Theta_{h_B}$ and so on. 
Throughout of this paper, we mainly focus on Griffiths (semi-)positivity and assume that $\Omega $ is a 1-dimensional domain in $\C$.

\subsection{Optimal $L^2$-extension property}

We explain the property called {\it the minimal extension property} \cite{HPS18} or {\it the optimal $L^2$-extension property} \cite{DNW21} \cite{DNWZ22} for a $1$-dimensional domain. 
In this article, we simply use the term ``the optimal $L^2$-extension property" since we follow the formulation of Deng--Ning--Wang--Zhou.

\begin{definition}[{minimal extension property \cite{HPS18}, optimal $L^2$-extension property \cite{DNW21}, \cite{DNWZ22}}]\label{def:minimalextensionproperty}
	Let $\varphi$ be an upper semi-continuous function on a domain $\Omega \subset \C$. 
	We say that $\varphi$ satisfies {\it the optimal $L^2$-extension property} if the following condition is satisfied: for each $a\in \Omega$ with $\varphi(a)\neq -\infty$ and any $r>0$ with $\Delta(a;r)\subset \Omega$, there exists a holomorphic function $f$ on $\Delta(a;r)$ satisfying $f(a)=1$ and 
	$$
	\int_{\Delta(a;r)}|f|^2e^{-\varphi}\leq \pi r^2 e^{-\varphi(a)}.
	$$
\end{definition}

In other words, if we can get the optimal $L^2$-extension from any point $a\in \Omega$ to $\Delta(a;r)$ with respect to the $L^2_\varphi$-norm, we say that $\varphi$ satisfies the optimal $L^2$-extension property. 
As mentioned in Introduction, we know that if $\varphi$ is psh, $\varphi$ satisfies the optimal $L^2$-extension property thanks to the work of B\l ocki \cite{Blo13} and Guan--Zhou \cite{GZ15}. 
Conversely, it is known that the following holds. 

\begin{theorem}[{\cite[Theorem 1.6]{DNW21}}, cf. \cite{GZ15}, \cite{HPS18}, \cite{DNWZ22}]\label{thm:minimalextensionproperty}
	Let $\varphi$ be an upper semi-continuous function on a domain $\Omega\subset \C$.  
	If $\varphi$ satisfies the optimal $L^2$-extension property, $\varphi$ is subharmonic. 
\end{theorem}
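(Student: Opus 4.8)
The plan is to convert the optimal $L^2$-extension estimate into the solid sub-mean value inequality for $\varphi$ on every small disk, and then to invoke the classical mean value characterization of subharmonic functions. The one analytic input beyond Jensen's inequality is the elementary fact that $\log|f|^2$ is subharmonic with value $0$ at the centre $a$ whenever $f$ is holomorphic with $f(a)=1$.

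First I would fix $a\in\Omega$. If $\varphi(a)=-\infty$ the mean value inequality at $a$ is vacuous, so assume $\varphi(a)\neq-\infty$, and fix $s\in(0,\delta(a))$. For any $\rho\in(s,\delta(a))$ the optimal $L^2$-extension property applied on $\Delta(a;\rho)$ produces a holomorphic $f$ on $\Delta(a;\rho)$ with $f(a)=1$ and $\int_{\Delta(a;\rho)}|f|^2e^{-\varphi}\le\pi\rho^2e^{-\varphi(a)}$. Restricting the integral to $\Delta(a;s)$ and applying Jensen's inequality with the convex function $\exp$ and the normalized Lebesgue measure $\dV/(\pi s^2)$ on $\Delta(a;s)$ gives
\[
\frac{\rho^2}{s^2}\,e^{-\varphi(a)}\ \ge\ \frac{1}{\pi s^2}\int_{\Delta(a;s)}|f|^2e^{-\varphi}\,\dV\ \ge\ \exp\!\left(\frac{1}{\pi s^2}\int_{\Delta(a;s)}\log|f|^2\,\dV-\frac{1}{\pi s^2}\int_{\Delta(a;s)}\varphi\,\dV\right).
\]
Since $\log|f|^2$ is subharmonic on $\Delta(a;\rho)\supset\overline{\Delta(a;s)}$, its solid mean over $\Delta(a;s)$ is at least $\log|f(a)|^2=0$; taking logarithms and rearranging yields $\frac{1}{\pi s^2}\int_{\Delta(a;s)}\varphi\,\dV\ge\varphi(a)-2\log(\rho/s)$. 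Letting $\rho\to s^{+}$, the left-hand side being independent of $\rho$, gives the solid sub-mean value inequality $\frac{1}{\pi s^2}\int_{\Delta(a;s)}\varphi\,\dV\ge\varphi(a)$ for every $s\in(0,\delta(a))$.

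Granting this inequality at every point (trivially where $\varphi=-\infty$), the classical characterization — an upper semi-continuous, locally integrable function satisfying the sub-mean value inequality over all small disks is subharmonic — concludes the proof. The required local integrability is not an extra hypothesis but a by-product: since $\int_{\Delta(a;s)}|f|^2e^{-\varphi}<\infty$ and $e^{-\varphi}$ is bounded below by a positive constant on the compact set $\overline{\Delta(a;s)}$ (because $\varphi$ is upper semi-continuous there), one has $f\in L^2(\Delta(a;s))$ and $\varphi>-\infty$ a.e.\ on $\Delta(a;s)$; moreover $\log|f|^2$, being subharmonic, bounded above on $\overline{\Delta(a;s)}$ and $\not\equiv-\infty$, lies in $L^1(\Delta(a;s))$; and the Jensen estimate above, whose right-hand side would be $+\infty$ if $\int_{\Delta(a;s)}\varphi=-\infty$ while the left-hand side is finite, then forces $\varphi\in L^1(\Delta(a;s))$. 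Hence $\varphi\in L^1_{\mathrm{loc}}$ on each component on which it is not identically $-\infty$, and on the remaining components the assertion is vacuous by the usual convention.

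The conceptual heart is the single application of Jensen's inequality, which turns the multiplicative $L^2$-estimate into an additive mean value estimate, the normalization $f(a)=1$ and the sub-mean value property of $\log|f|^2$ exactly absorbing the contribution of $f$. I expect the only delicate point to be the measure-theoretic bookkeeping that makes Jensen's inequality legitimate; this is routine once one notes that $\int e^g\,d\mu<\infty$ already makes $\int g\,d\mu$ well-defined in $[-\infty,\infty)$ (because $g^+\le e^g$), so that the finiteness of $\int_{\Delta(a;s)}\log|f|^2\,\dV$ propagates to that of $\int_{\Delta(a;s)}\varphi\,\dV$.
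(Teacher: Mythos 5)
Your proof is correct and is essentially the same argument the paper (following Deng--Ning--Wang's proof, reproduced in the paper's quick proof of the subsequent theorem) uses: take the optimal extension $f$ with $f(a)=1$, apply Jensen's inequality to turn the $L^2$-bound into the solid sub-mean value inequality for $\varphi$ via the subharmonicity of $\log|f|^2$, then invoke the classical mean-value characterization. Your detour through a slightly larger disk $\Delta(a;\rho)$ with $\rho\to s^{+}$ and the explicit integrability bookkeeping are harmless refinements of the same approach.
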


One main topic in this article is to generalize this result to the quantitative estimate of curvature positivity.

\subsection{$L^2$-extension index for line bundles}\label{prelim:subsec:1jigen}
In this subsection, we list the basic properties about the $L^2$-extension index.
Throughout this subsection, we let $\Omega\subset\C$ be a bounded domain and $\varphi$ be a smooth function on $\Omega$. 
First, we show the following elementary but important property. 

\begin{lemma}\label{lem:attain}
	For any $(a,r)\in \Omega_\delta$, there exists a holomorphic function $f$ on $\Delta(a;r)$ satisfying $f(a)=1$ and 
	$$
	L_\varphi(a,r)= \frac{\int_{\Delta(a;r)}|f|^2e^{-\varphi}}{\pi r^2 e^{-\varphi(a)}}.
	$$
\end{lemma}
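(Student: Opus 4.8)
\textbf{Plan for the proof of Lemma \ref{lem:attain}.}

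The statement asserts that the infimum defining $L_\varphi(a,r)$ is attained. The plan is to set $V := A^2(\Delta(a;r),\varphi)$ and realize the extremal problem as a norm-minimization in a Hilbert space. First I would observe that since $\varphi$ is smooth (hence bounded) on the closed disc $\overline{\Delta(a;r)}\subset\Omega$, the weighted space $V$ is a closed subspace of $L^2(\Delta(a;r),e^{-\varphi})$ — the weight is comparable to the unweighted Lebesgue measure — so $V$ is a Hilbert space and, being a space of holomorphic functions with a bounded reproducing property, point evaluation $f\mapsto f(a)$ is a continuous linear functional on $V$. In particular $V\neq\{0\}$ (constants lie in $V$ on the bounded disc with smooth weight) and the affine subset $\mathcal{A}:=\{f\in V\mid f(a)=1\}$ is nonempty, closed, and convex.

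Next I would invoke the standard Hilbert-space fact that a nonempty closed convex set in a Hilbert space contains a unique element of minimal norm: there is a unique $f\in\mathcal{A}$ minimizing $\int_{\Delta(a;r)}|f|^2e^{-\varphi}$. By the displayed identity in the paragraph preceding Definition \ref{intro:l2index} (the variational characterization of $1/(\pi r^2 K_{\Delta(a;r),\varphi}(a))$), the minimal value of this integral over $\mathcal{A}$, divided by $\pi r^2 e^{-\varphi(a)}$, is exactly $L_\varphi(a,r)$. Hence this minimizer $f$ satisfies $f(a)=1$ and $L_\varphi(a,r)=\dfrac{\int_{\Delta(a;r)}|f|^2e^{-\varphi}}{\pi r^2 e^{-\varphi(a)}}$, which is the claim. (Alternatively, and equivalently, one writes the minimizer explicitly as $f(z)=K_{\Delta(a;r),\varphi}(z,a)/K_{\Delta(a;r),\varphi}(a,a)$, using that $K_{\Delta(a;r),\varphi}(a,a)=K_{\Delta(a;r),\varphi}(a)>0$ because $V$ contains nonzero functions not vanishing at $a$; then a direct computation with the reproducing property gives $\int|f|^2e^{-\varphi}=1/K_{\Delta(a;r),\varphi}(a)$.)

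There is no real obstacle here; the only points requiring a word of care are (i) checking $A^2(\Delta(a;r),\varphi)$ is genuinely a Hilbert space and $\delta_a$ is bounded on it, which follows from smoothness of $\varphi$ on a neighborhood of the closed disc plus the submean-value/Bergman estimate for holomorphic functions, and (ii) confirming $K_{\Delta(a;r),\varphi}(a)>0$ so that the normalization $f(a)=1$ is achievable — again immediate since constants belong to $V$. I expect the write-up to be short, with the Hilbert-space projection theorem (or the explicit Bergman-kernel formula) doing all the work.
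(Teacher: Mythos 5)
Your proposal is correct and is exactly the "standard property of the weighted Bergman kernel" that the paper invokes without spelling out: the paper's proof is the one-line remark that the lemma "immediately follows due to the standard property of the weighted Bergman kernel," and your Hilbert-space minimal-norm / reproducing-kernel argument (with the explicit extremal $f(z)=K_{\Delta(a;r),\varphi}(z,a)/K_{\Delta(a;r),\varphi}(a,a)$) is precisely that standard property made explicit. The one detail worth stating plainly is that $r<\delta(a)$ guarantees $\overline{\Delta(a;r)}\subset\Omega$, so $\varphi$ is bounded there and $e^{-\varphi}$ is comparable to Lebesgue measure, which is what you rely on for closedness of $A^2$ and boundedness of $\delta_a$; you note this, so the write-up is complete.
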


This lemma immediately follows due to the standard property of the weighted Bergman kernel. %, and may be well-known. 
By using the notion of the $L^2$-extension index and Lemma \ref{lem:attain}, we can summarize Theorem \ref{thm:minimalextensionproperty} as follows. 
We use the same notation as in Theorem \ref{thm:minimalextensionproperty}.

\begin{theorem}\label{thm:iikae}
	If $L_\varphi\leq 1$, $\varphi$ is subharmonic. 
\end{theorem}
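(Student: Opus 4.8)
\emph{Proof proposal.} The plan is to recognize that the hypothesis $L_\varphi\le 1$ is merely a restatement, in the language of the $L^2$-extension index, of the optimal $L^2$-extension property for $\varphi$; once this is observed, the conclusion follows at once from Theorem~\ref{thm:minimalextensionproperty}.

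First I would reduce to subharmonicity on relatively compact subdomains. Since subharmonicity is a local property, it suffices to show that $\varphi$ is subharmonic on each disk $D=\Delta(b;\rho)$ with $\overline{D}\subset\Omega$, and such disks cover $\Omega$. Fix such a $D$. For any $a\in D$ and any $r>0$ with $\Delta(a;r)\subset D$, we have $\overline{\Delta(a;r)}\subset\overline{D}\subset\Omega$; as $\overline{\Delta(a;r)}$ is compact and $\Omega$ is open, this forces $\Delta(a;r+\varepsilon)\subset\Omega$ for some $\varepsilon>0$, hence $r<\delta(a)$ and $(a,r)\in\Omega_\delta$. Applying Lemma~\ref{lem:attain} together with the assumption $L_\varphi(a,r)\le 1$ produces a holomorphic function $f$ on $\Delta(a;r)$ with $f(a)=1$ and
\[
\int_{\Delta(a;r)}|f|^2e^{-\varphi}=L_\varphi(a,r)\,\pi r^2 e^{-\varphi(a)}\le \pi r^2 e^{-\varphi(a)}.
\]
Since $\varphi$ is smooth we also have $\varphi(a)\neq-\infty$. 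Therefore $\varphi|_D$ satisfies the optimal $L^2$-extension property in the sense of Definition~\ref{def:minimalextensionproperty}.

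Finally, because $\varphi$ is smooth and hence upper semi-continuous, Theorem~\ref{thm:minimalextensionproperty} applies to $\varphi|_D$ and shows that $\varphi|_D$ is subharmonic. Letting $D$ run over all such disks covering $\Omega$, we conclude that $\varphi$ is subharmonic on $\Omega$. I do not expect any genuine obstacle here: the entire analytic content is already contained in Theorem~\ref{thm:minimalextensionproperty} and in the elementary Bergman-kernel fact underlying Lemma~\ref{lem:attain}, and the only point requiring a moment's care is the passage to relatively compact sub-disks, which is what guarantees that every radius occurring in Definition~\ref{def:minimalextensionproperty} is among those parametrized by $\Omega_\delta$.
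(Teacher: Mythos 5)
Your proposal is correct and matches the paper's intent: the paper offers no explicit proof, simply remarking that Theorem~\ref{thm:iikae} ``summarizes'' Theorem~\ref{thm:minimalextensionproperty} via Lemma~\ref{lem:attain}, and your argument fills in precisely that translation. The one point of substance you handle that the paper leaves implicit is the mismatch between the domain of $L_\varphi$ (which parametrizes only radii $r<\delta(a)$) and the hypothesis of Definition~\ref{def:minimalextensionproperty} (which asks for all $r$ with $\Delta(a;r)\subset\Omega$, including the extremal $r=\delta(a)$); your reduction to relatively compact sub-disks resolves this cleanly, though one could equivalently note that subharmonicity is already forced by the sub-mean-value inequality over arbitrarily small radii, which is all the proof of Theorem~\ref{thm:minimalextensionproperty} actually uses.
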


For another example, we can state \cite[Theorem 1.7]{DNW21} as follows. 

\begin{theorem}
	If $\log L_{m\varphi}/m \to 0$ pointwise as $m\to +\infty$, $\varphi$ is subharmonic. 
\end{theorem}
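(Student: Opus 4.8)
\textbf{Proof proposal for Theorem \ref{thm:iikae} (and the subsequent restatement).}

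The plan is to deduce Theorem \ref{thm:iikae} directly from Theorem \ref{thm:minimalextensionproperty} by unwinding the definition of the $L^2$-extension index. The key observation is that the hypothesis $L_\varphi\le 1$ says precisely that $\frac{1}{\pi r^2 K_{\Delta(a;r),\varphi}(a)}\le 1$ for every $(a,r)\in\Omega_\delta$, i.e. $K_{\Delta(a;r),\varphi}(a)\ge \frac{1}{\pi r^2}$. First I would invoke Lemma \ref{lem:attain}: for each $(a,r)\in\Omega_\delta$ there is a holomorphic $f$ on $\Delta(a;r)$ with $f(a)=1$ and
\[
\frac{\int_{\Delta(a;r)}|f|^2e^{-\varphi}}{\pi r^2 e^{-\varphi(a)}}=L_\varphi(a,r)\le 1,
\]
which is exactly the inequality $\int_{\Delta(a;r)}|f|^2e^{-\varphi}\le \pi r^2 e^{-\varphi(a)}$ appearing in Definition \ref{def:minimalextensionproperty}. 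Since $\varphi$ is smooth, $\varphi(a)\ne-\infty$ automatically, so all hypotheses of the optimal $L^2$-extension property are met at every admissible $(a,r)$. Hence $\varphi$ satisfies the optimal $L^2$-extension property, and Theorem \ref{thm:minimalextensionproperty} yields that $\varphi$ is subharmonic.

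For the restatement of \cite[Theorem 1.7]{DNW21}, the argument is a limiting version of the same idea. The hypothesis $\log L_{m\varphi}/m\to 0$ pointwise means that for each fixed $(a,r)\in\Omega_\delta$ one has $L_{m\varphi}(a,r)=e^{m\,o(1)}$, hence $L_{m\varphi}(a,r)\le C_m$ with $C_m^{1/m}\to 1$. By Lemma \ref{lem:attain} applied to the weight $m\varphi$ there is $f_m\in\mathcal O(\Delta(a;r))$ with $f_m(a)=1$ and $\int_{\Delta(a;r)}|f_m|^2e^{-m\varphi}\le C_m\,\pi r^2 e^{-m\varphi(a)}$. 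This is exactly the form of the hypothesis in \cite[Theorem 1.7]{DNW21} (an ``almost optimal'' $L^2$-extension with constants subexponential in $m$), so that theorem applies verbatim and gives subharmonicity of $\varphi$. I expect no genuine obstacle here: both statements are essentially bookkeeping, translating the index inequality into the extension inequality and citing the already-established Deng--Ning--Wang--Zhou results. The only point requiring a line of care is confirming that ``$\log L_{m\varphi}/m\to 0$ pointwise'' matches the precise quantitative hypothesis of \cite[Theorem 1.7]{DNW21}; if that reference instead requires, say, a uniform bound on compact subsets, one would note that pointwise convergence together with the subharmonicity-type monotonicity of $r\mapsto L_{m\varphi}(a,r)$ (or a normal-families argument on the extremal functions) upgrades it appropriately.
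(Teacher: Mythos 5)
Your proposal takes a genuinely different route from the paper's. Setting aside your first paragraph, which addresses Theorem~\ref{thm:iikae} rather than the theorem actually in question, your plan for the $\log L_{m\varphi}/m\to 0$ statement is to convert the index bound into an extension inequality with a subexponential constant $C_m$ via Lemma~\ref{lem:attain}, and then cite \cite[Theorem~1.7]{DNW21} verbatim. This is logically admissible, but the paper explicitly introduces this theorem \emph{as} a restatement of that reference, so a proof-by-citation is essentially circular: it records the translation without demonstrating the mechanism. The paper instead gives a short self-contained argument: from the extremal $f_m$ with $f_m(a)=1$ realizing $L_{m\varphi}(a,r)$, take logarithms and apply Jensen's inequality together with plurisubharmonicity of $\log|f_m|^2$ to obtain
\[
-m\varphi(a)+\log L_{m\varphi}(a,r)\;\geq\;-\frac{m}{\pi r^2}\int_{\Delta(a;r)}\varphi,
\]
then divide by $m$ and let $m\to+\infty$ to recover the sub-mean-value inequality for $\varphi$. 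That argument buys you a self-contained proof, and it also resolves the worry you raise at the end of your proposal about matching the precise hypothesis of \cite{DNW21}: the Jensen route works with purely pointwise subexponential control, with no uniformity on compacts and no normal-families step needed, because the limit in $m$ is taken separately for each fixed $(a,r)$.
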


This type of theorem was proved in a more general setting \cite{DWZZ18}.
For the reader's convenience and to familiarize her or himself with the concept of the $L^2$-extension index, we give a quick proof of the theorem. 
The proof is almost the same as in \cite{DNW21}.

\begin{proof}
	Fix $(a,r)\in \Omega_\delta$ and $m\in \N$. 
	Thanks to Lemma \ref{lem:attain}, we can obtain a holomorphic function $f_m$ on $\Delta(a;r)$ satisfying $f_m(a)=1$ and 
	$$
	L_{m\varphi}(a,r)=\frac{\int_{\Delta(a;r)}|f_m|^2e^{-m\varphi}}{\pi r^2 e^{-m\varphi(a)}}, 
	$$
	that is, 
	$$
	L_{m\varphi}(a,r)e^{-m\varphi(a)}= \int_{\Delta(a;r)}|f_m|^2e^{-m\varphi}\frac{1}{\pi r^2}. 
	$$
	Taking the logarithm of the above inequality and using Jensen's inequality, we get 
	\begin{align*}
		-m\varphi(a)+\log L_{m\varphi}(a,r)&\geq \log \left( \int_{\Delta(a;r)}|f_m|^2e^{-m\varphi}\frac{1}{\pi r^2}\right)\\
		&\geq \int_{\Delta(a;r)} (\log |f_m|^2-m\varphi)\frac{1}{\pi r^2}\\
		&\geq \frac{1}{\pi r^2}\int_{\Delta(a;r)} -m\varphi
	\end{align*}
since $\log |f_m|^2$ is psh and $f_m(a)=1$. 
Hence, it follows that 
$$
\frac{1}{\pi r^2}\int_{\Delta(a;r)} \varphi \geq \varphi(a)- \frac{\log L_{m\varphi}(a,r)}{m}. 
$$
Taking $m\to +\infty$, we get 
$$
\frac{1}{\pi r^2}\int_{\Delta(a;r)} \varphi \geq \varphi(a),
$$
which completes the proof. 
\end{proof}

We can also show the following property. 
This result describes the behavior of $L_\varphi(a, \cdotp)$ near the origin.

\begin{proposition}\label{prop:lim0}
	$\lim_{r\to +0}L_\varphi(a, r)=1$ for any $a\in \Omega$. 
\end{proposition}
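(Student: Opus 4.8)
\textbf{Proof proposal for Proposition \ref{prop:lim0}.}

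The plan is to compute the small-radius asymptotics of the weighted Bergman kernel $K_{\Delta(a;r),\varphi}(a)$ and show that $\pi r^2 K_{\Delta(a;r),\varphi}(a)\to 1$ as $r\to +0$. The heuristic is that on a tiny disk $\Delta(a;r)$ the smooth weight $\varphi$ is nearly constant, equal to $\varphi(a)$ up to $O(r)$, so the weighted Bergman kernel should be close to the unweighted one, for which $K_{\Delta(a;r)}(a)=1/(\pi r^2)$ exactly, giving $L_\varphi(a,r)\to 1$. To make this precise I would exploit the variational characterization recalled in the excerpt, namely
\[
\frac{1}{\pi r^2 K_{\Delta(a;r),\varphi}(a)}=\inf\left\{\frac{\int_{\Delta(a;r)}|f|^2e^{-\varphi}}{\pi r^2 e^{-\varphi(a)}}~\middle|~f\in A^2(\Delta(a;r),\varphi),\ f(a)=1\right\},
\]
and bound this quantity both above and below by $1+o(1)$.

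For the upper bound, I would simply plug in the constant test function $f\equiv 1$, which is admissible, yielding
\[
L_\varphi(a,r)\leq \frac{1}{\pi r^2 e^{-\varphi(a)}}\int_{\Delta(a;r)}e^{-\varphi}.
\]
Since $\varphi$ is continuous (indeed smooth), $e^{-\varphi}\to e^{-\varphi(a)}$ uniformly on $\Delta(a;r)$ as $r\to 0$, so the right-hand side tends to $1$; concretely, $\sup_{\Delta(a;r)}|e^{-\varphi}-e^{-\varphi(a)}|\to 0$, hence $\frac{1}{\pi r^2}\int_{\Delta(a;r)}e^{-\varphi}=e^{-\varphi(a)}+o(1)$. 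For the lower bound, let $f_r$ be the extremal function from Lemma \ref{lem:attain}, with $f_r(a)=1$ and $L_\varphi(a,r)=\frac{\int_{\Delta(a;r)}|f_r|^2e^{-\varphi}}{\pi r^2 e^{-\varphi(a)}}$. Using $e^{-\varphi}\geq (\inf_{\Delta(a;r)}e^{-\varphi})$ and then the sub-mean-value inequality for the subharmonic function $|f_r|^2$ (namely $|f_r(a)|^2\leq \frac{1}{\pi r^2}\int_{\Delta(a;r)}|f_r|^2$, so $\frac{1}{\pi r^2}\int_{\Delta(a;r)}|f_r|^2\geq 1$), I would obtain
\[
L_\varphi(a,r)\geq \frac{\inf_{\Delta(a;r)}e^{-\varphi}}{e^{-\varphi(a)}}\cdot\frac{1}{\pi r^2}\int_{\Delta(a;r)}|f_r|^2\geq \frac{\inf_{\Delta(a;r)}e^{-\varphi}}{e^{-\varphi(a)}},
\]
and the last ratio tends to $1$ by continuity of $\varphi$. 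Combining the two bounds gives $\lim_{r\to +0}L_\varphi(a,r)=1$.

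I do not anticipate a genuine obstacle here: the argument is a soft squeeze using only continuity of $\varphi$, the admissibility of the constant function, and the sub-mean-value property of $|f_r|^2$ (equivalently, the reproducing/extremal property of the Bergman kernel), all of which are available from the material preceding the statement. The one point requiring a line of care is the uniform convergence $e^{-\varphi}\to e^{-\varphi(a)}$ on the shrinking disks, but this is immediate from continuity of $\varphi$ at $a$ together with the fact that $\mathrm{diam}\,\Delta(a;r)=2r\to 0$. If one wished to avoid even invoking Lemma \ref{lem:attain}, the lower bound could instead be derived by applying the sub-mean-value inequality directly to any admissible $f$ and optimizing, but using the extremal function makes the estimate cleanest.
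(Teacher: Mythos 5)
Your proof is correct. The lower bound coincides with the paper's: both extract the extremal function $f_r$ via Lemma \ref{lem:attain}, bound $e^{-\varphi}$ from below by $e^{-\varphi(a)-\varepsilon}$ on a small disk using continuity, and apply the sub-mean-value inequality to $|f_r|^2$. Your upper bound, however, is genuinely simpler than the paper's. The paper fixes a radius $R$, takes the extremal function $f_R$ on $\Delta(a;R)$, restricts it to $\Delta(a;r)$ for $r<R$, and applies the mean-value theorem for integrals to write $\int_{\Delta(a;r)}|f_R|^2 e^{-\varphi}=\pi r^2 |f_R(\zeta_r)|^2 e^{-\varphi(\zeta_r)}$ with $\zeta_r\to a$. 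You instead just plug in the constant test function $f\equiv 1$, yielding $L_\varphi(a,r)\leq \frac{1}{\pi r^2 e^{-\varphi(a)}}\int_{\Delta(a;r)}e^{-\varphi}\to 1$ by continuity of $\varphi$. This avoids any appeal to Lemma \ref{lem:attain} or to the mean-value theorem for the upper bound and is the more economical argument; the paper's route has the minor virtue of exhibiting the shape of an (approximately) extremal function, but nothing in the proposition requires that. Both routes rely only on continuity of $\varphi$, which the smoothness hypothesis supplies.
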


\begin{proof}
	Fix $a\in \Omega$. 
	Let $\varepsilon>0$ be an arbitrary positive number. 
	For each $r\in (0,\delta(a))$, due to Lemma \ref{lem:attain}, there is a holomorphic function $f_r$ on $\Delta(a;r)$ satisfying $f_r(a)=1$ and 
	$$
	L_\varphi(a,r)=\frac{\int_{\Delta(a;r)}|f_r|^2e^{-\varphi}}{\pi r^2e^{-\varphi(a)}}. 
	$$
	Since $\varphi$ is continuous, there exists $r_1\in (0,\delta(a))$ such that for any $z\in \Delta(a;r_1)$, $|\varphi(z)-\varphi(a)|<\varepsilon$. 
	Using the mean-value inequality for $|f_r|^2$, we have that 
	$$
	\int_{\Delta(a;r)}|f_r|^2e^{-\varphi}\geq e^{-\varphi(a)-\varepsilon}\int_{\Delta(a;r)}|f_r|^2\geq \pi r^2e^{-\varphi(a)-\varepsilon}.
	$$
	Hence, it follows that 
	$$
	L_\varphi(a,r)=\frac{\int_{\Delta(a;r)}|f_r|^2e^{-\varphi}}{\pi r^2e^{-\varphi(a)}}\geq e^{-\varepsilon}> 1-\varepsilon
	$$
	for any $r\in (0,r_1)$. 
	
	On the other hand, we fix $R\in (0,\delta(a))$. 
	Thanks to the mean-value theorem, for any $r\in (0,R)$, there exists $\zeta_r\in\Delta(a;r)$ such that 
	$$
	\int_{\Delta(a;r)}|f_R|_{\Delta(a;r)}|^2e^{-\varphi}=\pi r^2 |f_R(\zeta_r)|^2e^{-\varphi(\zeta_r)}.
	$$
	Then we get 
	$$
	\frac{\int_{\Delta(a;r)}|f_R|_{\Delta(a;r)}|^2e^{-\varphi}}{\pi r^2e^{-\varphi(a)}}=\frac{|f_R(\zeta_r)|^2e^{-\varphi(\zeta_r)}}{e^{-\varphi(a)}}.
	$$
	Since $\zeta_r\to a$ as $r\to+0$, it holds that 
	$$
	\lim_{r\to +0}\frac{\int_{\Delta(a;r)}|f_R|_{\Delta(a;r)}|^2e^{-\varphi}}{\pi r^2e^{-\varphi(a)}}=1.
	$$
	Then there is $r_2\in (0,R)$ such that for any $r\in(0, r_2)$ 
	$$
	1-\varepsilon<\frac{\int_{\Delta(a;r)}|f_R|_{\Delta(a;r)}|^2e^{-\varphi}}{\pi r^2e^{-\varphi(a)}}<1+\varepsilon. 
	$$
	By the definition of $L_\varphi$, we see that 
	$$
	L_\varphi(a,r)\leq \frac{\int_{\Delta(a;r)}|f_R|_{\Delta(a;r)}|^2e^{-\varphi}}{\pi r^2e^{-\varphi(a)}}< 1+\varepsilon
	$$
	for $r\in(0,r_2)$. 
	Thus, letting $r_0:=\min\{ r_1, r_2\}$, we have that 
	$$
	1-\varepsilon<L_\varphi(a,r)<1+\varepsilon
	$$
	for any $r\in (0,r_0)$. 
\end{proof}

At the last of this subsection, we show a specific calculation of the $L^2$-extension index. 

\begin{example}\label{ex:optimalexample}
	Let $\Delta:=\Delta(0;1)$ and $\varphi(z)=\lambda|z|^2$ for $\lambda>0$. We can easily see that 
	$$
	L_{\lambda |z|^2}(0,r)=\frac{\int_{\Delta(0;r)}e^{-\lambda|z|^2}}{\pi r^2e^{-\varphi(0)}}
	$$
	for $r\in (0,1)$. Then it follows that 
	$$
	L_{\lambda|z|^2}(0,r)=\frac{1}{\lambda r^2}(1-e^{-\lambda r^2})
	$$
	due to the simple computation. 
	%Since the Taylor expansion of $\frac{1}{x}(1-e^{-x})$ is given by
	%$$
	%\frac{1-e^{-x}}{x}=1-\frac{x}{2}+\frac{x^2}{3!}-\cdots,
	%$$
	%for any sufficiently small $\delta>0$ we have that 
	%$$
	%\frac{1-e^{-x}}{x}\leq e^{\left(\delta-\frac{1}{2}\right)x}
	%$$
	%on $(0,\eta_\delta)$ for some $\eta_\delta>0$ as in Lemma \ref{lem:taylortenkai}. Hence, it holds that 
	%$$
	%L_{\lambda|z|^2}(0,r)\leq e^{-\left(\frac{1}{2}-\delta\right)\lambda r^2}.
	%$$
	%Then, thanks to Theorem \ref{mainthm:1jigen}, we have that 
	%$$
	%\ai\ddbar(\lambda|z|^2)(0)=\lambda\omega\geq (1-2\delta)\lambda \omega \to \lambda\omega
	%$$
	%as $\delta\to+0$, which implies the optimality of the estimate in Theorem \ref{mainthm:1jigen}. 
\end{example}

\subsection{$L^2$-extension index for vector bundles}\label{prelim:subsec:koujigen}

In this subsection, let us introduce the higher rank analogues of the results in Section \ref{prelim:subsec:1jigen}.
First, we define $L^2$-extension indices for smooth Hermitian vector bundles.
Throughout this subsection, we also let $\Omega$ be a bounded domain in $\C$.

\begin{definition}\label{def:l2indexvector}
	Let $\pi:E\to \Omega$ be a holomorphic vector bundle over $\Omega$ and $h$ be a smooth Hermitian metric on $E$.
	We define {\it the $L^2$-extension index} $L_h$ of $h$ on $\Omega_\delta\times_\Omega E\sasyugo \{0\}$, by
%	\begin{align*}
%		L_h(a,r, \xi)=\frac{1}{\pi r^2 K_{\Delta(a;r), h}(a)}.
%	\end{align*}
%Here $K_{\Delta(a;r), h}$ is the Bergman kernel for $(E,h)$ on $\Omega$ defined by 
%$$
%K_{\Delta(a;r), h}=\sup\left\{ \frac{}{den} s\in A^2(\Delta(a;r), h)\right\}
%$$
	$$
	L_h(a,r, \xi)=\inf\left\{ \frac{\int_{\Delta(a;r)}|s|^2_h}{\pi r^2 |\xi|^2_{h(a)}} ~\middle|~ s\in A^2(\Delta(a;r), h), s(a)=\xi \right\},
	$$ 
	where $E\sasyugo\{0\}$ is the complement of the zero section of $E$, $\Omega_\delta\times_\Omega E\sasyugo\{0\} =\{ (a, r, \xi)\in \Omega_\delta\times E\sasyugo \{0\} \mid a=\pi(\xi) \}$ and $A^2(\Delta(a;r), h)$ is the space of square integrable holomorphic sections of $E$ with respect to $h$ on $\Delta(a;r)$. 
\end{definition}

If rank$E=1$, the above definition is equal to Definition \ref{intro:l2index}. 
The same properties as in Subsection \ref{prelim:subsec:1jigen} also hold. 

\begin{lemma}\label{lem:vectorattain}
	For any $(a, r, \xi)\in \Omega_\delta\times_\Omega E\sasyugo\{0\}$, there is a holomorphic section $s\in A^2(\Delta(a;r), h)$ such that $s(a)=\xi$ and 
	$$
	L_h(a,r, \xi)=\frac{\int_{\Delta(a;r)}|s|^2_h}{\pi r^2 |\xi|^2_{h(a)}}. 
	$$
\end{lemma}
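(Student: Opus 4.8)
The plan is to show that the infimum defining $L_h(a,r,\xi)$ is attained by a concrete element built from the weighted Bergman projection, exactly as in the line bundle case (Lemma \ref{lem:attain}), and then verify that this element is holomorphic, square-integrable, and takes the prescribed value at $a$. Concretely, consider the Hilbert space $A^2(\Delta(a;r),h)$ of $L^2_h$-holomorphic sections of $E$ over $\Delta(a;r)$, equipped with the inner product $\langle s,t\rangle = \int_{\Delta(a;r)} \langle s,t\rangle_h$. Fix a local holomorphic frame to identify sections with $\C^r$-valued holomorphic functions; then $A^2(\Delta(a;r),h)$ is a reproducing kernel Hilbert space, so there is a (vector-valued) Bergman kernel and, for the fixed point $a$, evaluation $s\mapsto s(a)\in E_a$ is a bounded linear map. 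The affine subspace $\{s : s(a)=\xi\}$ is a nonempty closed convex subset (nonempty because we can use the $L^2$-extension from the $1$-dimensional Ohsawa--Takegoshi theorem, or simply because constants in the frame give elements with the right value that are square-integrable on the relatively compact disc), so it contains a unique element $s_0$ of minimal norm, namely the orthogonal projection of $0$ onto it. This $s_0$ realizes the infimum by definition.

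The key steps, in order, are: (1) set up $A^2(\Delta(a;r),h)$ as a Hilbert space and note $h$ is smooth hence locally comparable to the trivial metric, so the norm is equivalent on compact subsets to the ordinary $L^2$ norm; in particular $\Delta(a;r)\Subset\Omega$ guarantees completeness and that the trivial-frame constant section $z\mapsto \xi$ has finite $L^2_h$-norm; (2) observe that $s\mapsto s(a)$ is continuous on $A^2(\Delta(a;r),h)$ (by the mean value property / reproducing kernel), so $\{s: s(a)=\xi\}$ is closed, nonempty, convex; (3) invoke the Hilbert space projection theorem to get the unique minimizer $s_0$; (4) conclude $L_h(a,r,\xi)=\int_{\Delta(a;r)}|s_0|^2_h / (\pi r^2 |\xi|^2_{h(a)})$. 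One may also remark that $s_0$ can be written explicitly via the weighted Bergman kernel operator of $E$ evaluated against $\xi$, matching the $\mathrm{rank}=1$ formula $f = K_{\Delta(a;r),\varphi}(\cdot,a)/K_{\Delta(a;r),\varphi}(a)$, but this is not needed for the existence statement.

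There is essentially no hard obstacle here: the statement is the vector-bundle bookkeeping analogue of Lemma \ref{lem:attain}, and the only point requiring a word of care is the non-emptiness of the competitor set and the closedness of the evaluation constraint, both of which follow from $\Delta(a;r)\Subset\Omega$ together with smoothness of $h$ (so that on the relatively compact disc the metric is bounded above and below). The mildest subtlety is that evaluation at $a$ lands in $E_a$ rather than $\C$, but after trivializing $E$ over a neighborhood of $\overline{\Delta(a;r)}$ this is literally the $\C^r$-valued Bergman theory, so the reproducing property and projection argument go through componentwise. I would therefore present the proof as: trivialize, apply the scalar-valued reasoning of Lemma \ref{lem:attain} coordinatewise, and translate back.
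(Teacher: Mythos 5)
Your proposal is correct, but it takes a genuinely different route from the paper. The paper's proof is a concrete compactness argument: it picks a minimizing sequence $\{s_n\}$ with $s_n(a)=\xi$, fixes a frame $(e_1,\dots,e_r)$ so that sections become $\C^r$-valued holomorphic functions, uses $\Delta(a;r)\Subset\Omega$ together with smoothness of $h$ to get two-sided bounds $c|u|^2\le |u|^2_h\le C|u|^2$ on the disc, deduces uniform $L^2$-bounds on the components $s_n^i$, extracts a locally uniformly convergent subsequence via Montel's theorem, and passes to the limit by Fatou's lemma. You instead invoke the abstract Hilbert space framework: $A^2(\Delta(a;r),h)$ is a Hilbert space, evaluation at $a$ is a bounded linear map (via the reproducing kernel), so the competitor set $\{s:s(a)=\xi\}$ is a nonempty closed convex affine subspace, and the projection theorem gives the minimum-norm element. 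Both arguments rest on exactly the same analytic input (relative compactness of $\Delta(a;r)$ and comparability of $h$ to the trivial metric there), and both are standard in Bergman-space theory; the projection route is a bit more structural and hands you uniqueness of the minimizer for free, while the Montel--Fatou route that the author writes out is more elementary and self-contained, avoiding the need to verify completeness and boundedness of the evaluation functional explicitly. Either is acceptable; yours is the slicker packaging of the same underlying facts.
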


\begin{proof}
	%The proof is almost the same as the proof of Lemma \ref{lem:attain}.
	Let $(a, r, \xi)\in \Omega_\delta\times_\Omega E\sasyugo\{0\}$ and $A=L_h(a,r, \xi)$. 
	For each $n\in\N$, there exists $s_n\in A^2(\Delta(a;r),h)$ such that $s_n(a)=\xi$ and
	$$
	A\leq \frac{\int_{\Delta(a;r)}|s_n|^2_h}{\pi r^2 |\xi|^2_{h(a)}}\leq A+\frac{1}{n}.
	$$
	%Since $S_a:=\{ v\in E_a \mid |v|_{h(a)}=1\}$ is compact, there exists a subsequence $\{ \xi_{n_k}\}_k$ such that $|\xi_{n_k}-\xi|_{h(a)}\to 0$ for some $\xi\in E_a$. 
	%Extracting this subsequence, we may assume that $|\xi_n-\xi|_{h(a)}\to 0$ for $\{ \xi_n\}_{n\in\N}$. 
	
	We choose a frame $(e_1, \ldots, e_r)$ of $E$ and write $u=u^1e_1+\cdots +u^re_r$ for a holomorphic section $u$ of $E$. 
	Since $\Delta(a;r)$ is relatively compact in $\Omega$, there exist constants $c, C>0$ such that $c|u|^2\leq |u|^2_h\leq C|u|^2$ on $\Delta(a;r)$ for $u\in A^2(\Delta(a;r),h)$. 
	Here $|u|^2$ denotes $|u|^2=|u^1|^2+\cdots+|u^n|^2$. 
	Therefore, 
	$$
	\int_{\Delta(a;r)}|s_n^i|^2\leq \frac{1}{c}\pi r^2 |\xi|^2_{h(a)}\left(A+1\right)
	$$
	for all $n\in\N$ and $1\leq i\leq r$. 
	%Let the notation be the same as in the proof of Lemma \ref{lem:attain}.
	%By using the same argument and notation as in the proof of Lemma \ref{lem:attain}, 
	By Montel's theorem, we obtain a subsequence $\{ s^i_{n_k}\}_k$ of $\{s^i_n\}$ such that $\{ s^i_{n_k}\}_k$ converges compactly to some holomorphic function $s^i$ for each $i$.
	%  and 
	% $$
	% \lim_{k\to \infty}\int_{\Delta_j}|s^i_{n_k}|^2=\int_{\Delta_j}|s^i|^2
	% $$ 
	% for each $i$ on $\Delta_j$. 
	%We simply write $\{s^i_n\}$ for $\{ s^i_{n_k}\}$ for each $i$.
	Extracting a subsequence and renumbering the subscripts, we may assume that for each $i$, $\{ s^i_n\}_n$ satisfies the above conditions. 
	We define $s:=s^1e_1+\cdots+s^re_r\in H^0(\Delta(a;r),E)$.
	Note that $s(a)=\xi$.
	Then Fatou's lemma implies that 
	\[
		\frac{\int_{\Delta(a;r)}|s|^2_h}{\pi r^2 |\xi|^2_{h(a)}}\leq A,
	\]
which completes the proof. 
	% We have that
	% \begin{align*}
	% 	\int_{\Delta_j}\left||s_n|^2_h-|s|^2_h\right|&=\int_{\Delta_j} \left| |s_n|_h-|s|_h\right|\left(|s_n|_h+|s|_h\right)\\
	% 	&\leq \sup_{\Delta_j}\left(|s_n|_h+|s|_h\right)\int_{\Delta_j}\left| |s_n|_h-|s|_h\right|\\
	% 	&\leq \sup_{\Delta_j}\left(|s_n|_h+|s|_h\right)\int_{\Delta_j}|s_n-s|_h\\
	% 	&\leq \sup_{\Delta_j}\left(|s_n|_h+|s|_h\right)\sqrt{C}\int_{\Delta_j}\left(|s^1_n-s^1|+\cdots +|s^r_n-s^r|\right).
	% \end{align*}
	% Since $\sup_{\Delta_j}\left(|s_n|_h+|s|_h\right)$ has an upper bound independent of $n$, it holds that 
	% $$
	% \lim_{n\to \infty}\int_{\Delta_j}|s_n|^2_h=\int_{\Delta_j}|s|^2_h.
	% $$
	% By repeating the argument in the proof of Lemma \ref{lem:attain} again, we can conclude that $s$ satisfies $s(a)=\xi$ and 
	% $$
	% L_h(a,r,\xi)=\frac{\int_{\Delta(a;r)}|s|^2_h}{\pi r^2 |\xi|^2_{h(a)}}. 
	% $$
\end{proof}

\begin{proposition}\label{prop:lim0vector}
	For any $\xi\in E\sasyugo \Omega$,
	$\lim_{r\to +0}L_h(a,r, \xi)=1$, where $a=\pi(\xi)$. 
\end{proposition}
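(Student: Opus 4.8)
The plan is to mirror the proof of Proposition \ref{prop:lim0}, squeezing $L_h(a,r,\xi)$ between a lower bound and an upper bound that both tend to $1$ as $r\to+0$. Throughout, fix $a=\pi(\xi)$ and work in a holomorphic frame of $E$ over a fixed disk $\Delta(a;R)\subset\Omega$, so that $h$ is represented by a smooth positive-definite Hermitian-matrix-valued function and a holomorphic section is a vector of holomorphic functions.

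For the lower bound, fix $\varepsilon>0$. For each small $r$, apply Lemma \ref{lem:vectorattain} to get an extremal holomorphic section $s_r$ on $\Delta(a;r)$ with $s_r(a)=\xi$ and $L_h(a,r,\xi)=\int_{\Delta(a;r)}|s_r|^2_h\,/\,(\pi r^2|\xi|^2_{h(a)})$. By continuity of $h$ there is $r_1\in(0,R)$ with $h(z)\geq(1-\varepsilon)h(a)$ (as Hermitian forms) for $z\in\Delta(a;r_1)$, so $\int_{\Delta(a;r)}|s_r|^2_h\geq(1-\varepsilon)\int_{\Delta(a;r)}|s_r|^2_{h(a)}$ whenever $r<r_1$. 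Now $h(a)$ is a constant positive Hermitian form, so after diagonalizing it the function $|s_r|^2_{h(a)}$ is a finite sum of squared moduli of holomorphic functions, hence subharmonic; the sub-mean-value inequality gives $\frac{1}{\pi r^2}\int_{\Delta(a;r)}|s_r|^2_{h(a)}\geq|s_r(a)|^2_{h(a)}=|\xi|^2_{h(a)}$. Combining, $L_h(a,r,\xi)\geq 1-\varepsilon$ for all $r\in(0,r_1)$.

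For the upper bound, let $\sigma\in H^0(\Delta(a;R),E)$ be the section whose components in the chosen frame are the (constant) components of $\xi$, so $\sigma(a)=\xi$. By the definition of $L_h$ as an infimum, $L_h(a,r,\xi)\leq\int_{\Delta(a;r)}|\sigma|^2_h\,/\,(\pi r^2|\xi|^2_{h(a)})$ for every $r<R$. The function $z\mapsto|\sigma(z)|^2_{h(z)}$ is continuous with value $|\xi|^2_{h(a)}$ at $a$, hence $\frac{1}{\pi r^2}\int_{\Delta(a;r)}|\sigma|^2_h\to|\xi|^2_{h(a)}$ as $r\to+0$ (directly from continuity, or via the mean-value-theorem argument used in the proof of Proposition \ref{prop:lim0}). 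Therefore $\limsup_{r\to+0}L_h(a,r,\xi)\leq 1$.

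Since $\varepsilon>0$ is arbitrary, the lower bound gives $\liminf_{r\to+0}L_h(a,r,\xi)\geq 1$, and together with the upper bound this proves $\lim_{r\to+0}L_h(a,r,\xi)=1$. The only step that genuinely differs from the scalar case — and the one I expect to be the mild obstacle — is the lower bound: $|s|^2_h$ is not subharmonic for a general Hermitian metric, so one cannot apply the mean-value inequality to it directly; the fix is to first replace $h$ by the frozen metric $h(a)$ at the cost of a factor $1-\varepsilon$, and only then use subharmonicity of $|s|^2_{h(a)}$. The rest is a routine transcription of Proposition \ref{prop:lim0}.
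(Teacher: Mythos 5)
Your proof is correct and follows essentially the same strategy as the paper: a squeeze argument where the lower bound is obtained by freezing the metric at $a$ (the paper's orthonormal frame at $a$ is exactly your diagonalization of $h(a)$) and applying the sub-mean-value inequality to $\sum_i |s_r^i|^2$, and the upper bound comes from testing the infimum against a fixed holomorphic section. The only cosmetic difference is in the upper bound, where you use the constant-component section $\sigma$ and plain continuity of $|\sigma|^2_h$, whereas the paper uses the extremal section $s_R$ for a fixed radius $R$ together with the integral mean-value theorem; your variant is a touch leaner but not a distinct method.
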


\begin{proof}
	Let $\varepsilon>0$. We choose an orthonormal frame $(e_1, \ldots, e_r)$ of $E$ at $a\in\Omega$. 
	Let us consider the following value 
	$$
	\frac{|s|^2_{h}}{|s|^2}
	$$
	for $s=s^1e_1+\cdots+s^re_r\in H^0(\Delta(a;r),E)$, where $|s|^2=|s^1|^2+\cdots+|s^r|^2$. 
	Note that its minimum value is the smallest eigenvalue of $h$, which varies continuously.
	Since $|s(a)|^2_h/|s(a)|^2=1$, there exists $r_1\in(0,\delta(a))$, which is independent of $s$, such that for any $z\in\Delta(a;r_1)$
	$$
	\frac{|s|^2_{h}}{|s|^2}>1-\varepsilon.
	$$
	We take $s_r\in H^0(\Delta(a;r),E)$ satisfying $s_r(a)=\xi$ and 
	$$
	L_h(a,r,\xi)=\frac{\int_{\Delta(a;r)}|s_r|^2_h}{\pi r^2 |\xi|^2_{h(a)}}
	$$
	for each $r\in(0,r_1)$. 
	Then 
	\begin{align*}
	L_h(a,r,\xi)&= \frac{\int_{\Delta(a;r)}|s_r|^2_h}{\pi r^2 |\xi|^2_{h(a)}}\\
	&> \frac{(1-\varepsilon)\int_{\Delta(a;r)}|s_r|^2}{\pi r^2 |\xi|^2_{h(a)}}\\
	&= \frac{(1-\varepsilon)\int_{\Delta(a;r)}(|s_r^1|^2+\cdots+|s_r^r|^2)}{\pi r^2(|\xi^1|^2+\cdots+|\xi^r|^2)}\\
	%&\geq (1-\varepsilon)(|s^1_r(a)|^2+\cdots+|s^r_r(a)|^2)\\
	&\geq (1-\varepsilon). 
	\end{align*}
	
	On the other hand, for some fixed $R\in(0, \delta(a))$, we take $s_R$, which is the same as above. 
	By using the mean-value theorem, for any $r\in(0,R)$ there exists $\zeta_r\in\Delta(a;r)$ such that 
	$$
	\frac{\int_{\Delta(a;r)}|s_R|_{\Delta(a;r)}|^2_h}{\pi r^2|\xi|^2_{h(a)}}=\frac{|s_R(\zeta_r)|^2_{h(\zeta_r)}}{|\xi|^2_{h(a)}}.
	$$
	Since $\zeta_r\to a$ as $r\to +0$, we see that 
	$$
	\lim_{r\to +0}\frac{\int_{\Delta(a;r)}|s_R|_{\Delta(a;r)}|^2_h}{\pi r^2|\xi|^2_{h(a)}}=1. 
	$$
	
	By combining the two facts above and repeating the argument in the proof of Proposition \ref{prop:lim0}, we arrive at the conclusion that
	$
	\lim_{r\to +0}L_h(a,r, \xi)=1.
	$
\end{proof}

\section{$L^2$-extension indices for line bundles and curvature positivity}\label{sec:1jigen}

Our main goal in this section is to give a proof of Theorem \ref{mainthm:1jigen}. 
We also discuss Pr\'ekopa-type theorems.  
We show further applications as well.

First, in order to prove Theorem \ref{mainthm:1jigen}, we prepare the following elementary lemma. 

\begin{lemma}\label{lem:taylortenkai}
	Let $f(x)=\sqrt{\frac{e^x-1}{x}}$ for $x>0$.
	For any positive number $\delta>0$, there exists $\eta_\delta>0$ such that 
	$$
	\sqrt{\frac{e^x-1}{x}}\leq e^{\left(\frac{1}{4}+\frac{\delta}{2}\right)x}
	$$ 
	on $(0, \eta_\delta)$. 
\end{lemma}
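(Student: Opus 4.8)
The plan is to reduce the claimed inequality to an elementary estimate on $\log\frac{e^x-1}{x}$ near the origin. Since both sides of the asserted inequality are positive, squaring and taking logarithms shows that the statement is equivalent to
\[
\log\frac{e^x-1}{x}\le\left(\tfrac12+\delta\right)x
\]
for all sufficiently small $x>0$. So it suffices to control the function $g(x):=\log\frac{e^x-1}{x}$ as $x\to+0$.

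First I would observe that $h(x):=\frac{e^x-1}{x}=\sum_{k\ge 0}\frac{x^k}{(k+1)!}$ extends to a real-analytic (in particular $C^1$) function on a neighbourhood of $0$ with $h(0)=1>0$; hence $g=\log h$ is $C^1$ near $0$, with $g(0)=0$. A short computation with the power series — either differentiating $h$ directly, or expanding $\log\bigl(1+\frac{x}{2}+\frac{x^2}{6}+\cdots\bigr)$ — gives $g(x)=\frac{x}{2}+\frac{x^2}{24}+O(x^3)$, and in particular $g'(0)=\frac12$.

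Consequently $\lim_{x\to+0}\frac{g(x)}{x}=g'(0)=\frac12<\frac12+\delta$, so there is $\eta_\delta>0$ with $\frac{g(x)}{x}\le\frac12+\delta$, i.e. $g(x)\le\left(\frac12+\delta\right)x$, for every $x\in(0,\eta_\delta)$. Exponentiating the inequality $\frac12 g(x)\le\bigl(\frac14+\frac{\delta}{2}\bigr)x$ then yields $\sqrt{\frac{e^x-1}{x}}=e^{g(x)/2}\le e^{(\frac14+\frac{\delta}{2})x}$ on $(0,\eta_\delta)$, which is the claim.

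There is essentially no serious obstacle here; the only point requiring a (trivial) verification is the value $g'(0)=\frac12$, which follows from $e^x-1=x+\frac{x^2}{2}+O(x^3)$, so that $\frac{e^x-1}{x}=1+\frac{x}{2}+O(x^2)$ and hence $\log\frac{e^x-1}{x}=\frac{x}{2}+O(x^2)$. One could equally invoke L'Hôpital's rule for the limit $\lim_{x\to+0}g(x)/x$ instead of the Taylor expansion.
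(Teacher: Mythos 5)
Your proof is correct and uses essentially the same idea as the paper's: both reduce the claim to a first-order comparison near $x=0$. The paper expands $\frac{e^x-1}{x}$ and $e^{(\frac12+\delta)x}$ directly and notes that their difference is $\delta x + o(x)>0$ for small $x$, whereas you take logarithms first and verify $g'(0)=\tfrac12<\tfrac12+\delta$; these are two equivalent packagings of the same Taylor-expansion argument.
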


\begin{proof}
	It is enough to show that 
	$$
	\frac{e^x-1}{x}\leq e^{\left(\frac{1}{2}+\delta\right)x}.
	$$
	The Taylor expansions of $\frac{e^x-1}{x}$ and $e^{\left(\frac{1}{2}+\delta\right)x}$ are given by 
	\begin{align*}
	\frac{e^x-1}{x}&=1+\frac{x}{2}+\cdots + \frac{x^n}{(n+1)!}+\cdots,\\
	e^{\left(\frac{1}{2}+\delta\right)x}&=1+\left(\frac{1}{2}+\delta \right)x+\cdots +\frac{\left(\frac{1}{2}+\delta\right)^nx^n}{n!}+\cdots .
    \end{align*}
Hence, there exists $\eta_\delta>0$ such that 
	$$
\sqrt{\frac{e^x-1}{x}}\leq e^{\left(\frac{1}{4}+\frac{\delta}{2}\right)x}
$$ 
on $(0, \eta_\delta)$ since 
$
e^{\left(\frac{1}{2}+\delta\right)x}-\frac{e^x-1}{x}=\delta x + o(x). 
$
\end{proof}

Then we prove Theorem \ref{mainthm:1jigen}. 

\begin{proof}[Proof of Theorem \ref{mainthm:1jigen}]
	Fix an arbitrary point $a\in\Omega$.
	We also take $\gamma_a$ and a lower semi-continuous function $g_a\colon[0, \gamma_a]\to \R_{\geq 0}$ as in Theorem \ref{mainthm:1jigen}. % and a positive number $\varepsilon>0$. 
	If $g_a(0)>0$, we take an arbitrary number $\varepsilon$ such that $\varepsilon\in (0, g_a(0))$. 
	Since $g_a$ is lower semi-continuous, there exists $r_\varepsilon\in (0,\gamma_a)$ such that $g_a(0)-\varepsilon<g_a(r)$ for $r\in (0,r_\varepsilon)$. 
	By Lemma \ref{lem:attain}, for $r\in(0,r_\varepsilon)$ there exists a holomorphic function $f_r$ on $\Delta(a;r)$ satisfying $f_r(a)=1$ and 
	$$
	\int_{\Delta(a;r)}|f_r|^2e^{-\varphi}=L_\varphi(a,r)\pi r^2e^{-\varphi(a)}. 
	$$
	
	We now define a function $\varphi_{a,\varepsilon}$ by $\varphi_{a,\varepsilon}(z):=\frac{\varphi(z)}{2}-(g_a(0)-\varepsilon)|z-a|^2$.  
	Note that $\varphi_{a,\varepsilon}(a)=\frac{\varphi(a)}{2}$. 
	Then for $r\in(0, r_\varepsilon)$ we have that 
	%\begin{align*}
		%\int_{\Delta(a;r)}|f_r|^2e^{-\varphi_{a,\varepsilon}}&= \int_{\Delta(a;r)}|f_r|^2e^{-\varphi}e^{(g_a(0)-\varepsilon)|z-a|^2}\\
		%&\leq e^{(g_a(0)-\varepsilon)r^2}\int_{\Delta(a;r)}|f_r|^2e^{-\varphi}\\
		%&= e^{(g_a(0)-\varepsilon)r^2}L_\varphi(a,r)\pi r^2e^{-\varphi(a)}\\
		%&\leq e^{(g_a(0)-\varepsilon-g_a(r))r^2}\pi r^2e^{-\varphi(a)}\\
		%&\leq \pi r^2e^{-\varphi(a)}\\
		%&=\pi r^2 e^{-\varphi_{a,\varepsilon}(a)}
	%\end{align*}
\begin{align*}
	\int_{\Delta(a;r)} |f_r|e^{-\varphi_{a,\varepsilon}}&=\int_{\Delta(a;r)}|f_r|e^{-\frac{\varphi}{2}}e^{(g_a(0)-\varepsilon)|z-a|^2}\\
	&\leq \left(\int_{\Delta(a;r)}|f_r|^2e^{-\varphi}\right)^{1/2}\left( \int_{\Delta(a;r)}e^{2(g_a(0)-\varepsilon)|z-a|^2}\right)^{1/2}\\
	&= \sqrt{L_\varphi(a,r)}\sqrt{\pi}re^{-\frac{\varphi(a)}{2}}\sqrt{\frac{\pi}{2(g_a(0)-\varepsilon)}\left( e^{2(g_a(0)-\varepsilon)r^2}-1\right)}\\
	&\leq \pi r^2 e^{-\frac{\varphi(a)}{2}}e^{-\frac{g_a(r)}{2} r^2}\sqrt{\frac{e^{2(g_a(0)-\varepsilon)r^2}-1}{2(g_a(0)-\varepsilon)r^2}}.%\\
	%&\leq \pi r^2 e^{-\frac{\varphi(a)}{2}}e^{-\frac{g_a(r)}{2} r^2}e^{(g_a(0)-\varepsilon)r^2}
\end{align*}
from the assumption that $L_\varphi(a,r)\leq e^{-g_a(r)r^2}$.
%$g_a(0)-\varepsilon>0$, $L_\varphi(a,r)\leq e^{-g_a(r)r^2}$ and $g_a(0)-\varepsilon-g_a(r)<0$. 
We fix an arbitrary small number $\delta>0$. 
Thanks to Lemma \ref{lem:taylortenkai}, there exists $\eta_\delta>0$ such that 
$$
\sqrt{\frac{e^x-1}{x}}\leq e^{\left(\frac{1}{4}+\frac{\delta}{2}\right)x}
$$ 
on $(0, \eta_\delta)$. 
Set $r_\delta:=\sqrt{\frac{\eta_\delta}{2g_a(0)}}$, which is independent of $\varepsilon$. 
If $r\in (0,r_\delta)$, we have that 
$$
\sqrt{\frac{e^{2(g_a(0)-\varepsilon)r^2}-1}{2(g_a(0)-\varepsilon)r^2}}\leq e^{\left( \frac{1}{2}+\delta\right)\left(g_a(0)-\varepsilon\right)r^2}.
$$
Letting $r_{\varepsilon,\delta}:=\min\{ r_\varepsilon,r_\delta\}>0$, for $r\in(0,r_{\varepsilon,\delta})$ we obtain that 
\begin{align}
		\int_{\Delta(a;r)} |f_r|e^{-\varphi_{a,\varepsilon}}&=\int_{\Delta(a;r)}|f_r|e^{-\frac{\varphi}{2}}e^{(g_a(0)-\varepsilon)|z-a|^2}\label{ineq:1}\\
		&\leq \pi r^2 e^{-\frac{\varphi(a)}{2}}e^{-\frac{g_a(r)}{2} r^2}\sqrt{\frac{e^{2(g_a(0)-\varepsilon)r^2}-1}{2(g_a(0)-\varepsilon)r^2}}\label{ineq:2}\\
		&\leq \pi r^2 e^{-\frac{\varphi(a)}{2}}e^{-\frac{g_a(r)}{2} r^2}e^{\left(\frac{g_a(0)}{2}-\frac{\varepsilon}{2}\right)r^2}e^{(g_a(0)-\varepsilon)\delta r^2}\label{ineq:3}\\
		&\leq \pi r^2 e^{-\frac{\varphi(a)}{2}}e^{g_a(0)\delta r^2}\label{ineq:4}\\
		&= \pi r^2 e^{-\varphi_{a,\varepsilon}(a)}e^{g_a(0)\delta r^2}\label{ineq:5}
\end{align}
from $g_a(0)-\varepsilon-g_a(r)<0$. 
Taking the logarithm of the above inequality and using Jensen's inequality, we have 
$$
\frac{1}{\pi r^2}\int_{\Delta(a;r)}\varphi_{a,\varepsilon}\geq \varphi_{a,\varepsilon}(a)-g_a(0)\delta r^2. 
$$
We write the Taylor expansion of $\varphi_{a,\varepsilon}$ at $a\in\Omega$ by 
\begin{align*}
\varphi_{a,\varepsilon}(a+\zeta)&=\varphi_{a,\varepsilon}(a)+\frac{\partial\varphi_{a,\varepsilon}}{\partial z}(a)\zeta+\frac{\partial \varphi_{a,\varepsilon}}{\partial \bar{z}}(a)\bar{\zeta}\\
&+\frac{1}{2}\frac{\partial^2\varphi_{a,\varepsilon}}{\partial z^2}(a)\zeta^2+\frac{1}{2}\frac{\partial^2\varphi_{a,\varepsilon}}{\partial \bar{z}^2}(a)\bar{\zeta}^2+\frac{\partial^2\varphi_{a,\varepsilon}}{\partial z\partial \bar{z}}(a)|\zeta|^2+o (|\zeta|^2).
\end{align*}
Letting $\zeta=Re^{\ai\theta}$, we get 
$$
\frac{2}{\pi r^4}\int_{\Delta(0;r)}(\varphi_{a,\varepsilon}(a+\zeta)-\varphi_{a,\varepsilon}(a))=\frac{\partial^2 \varphi_{a,\varepsilon}}{\partial z\partial\bar{z}}(a)+o(1). 
$$
It also holds that 
$$
\frac{2}{\pi r^4}\int_{\Delta(0;r)}(\varphi_{a,\varepsilon}(a+\zeta)-\varphi_{a,\varepsilon}(a))\geq -2g_a(0)\delta.
$$
Taking $r\to +0$, we have that $\partial^2 \varphi_{a,\varepsilon}/\partial z\partial\bar{z}(a)\geq -2g_a(0)\delta$.
Since $\delta>0$ is arbitrary, taking the limit $\delta\to+0$, we get  $\partial^2 \varphi_{a,\varepsilon}/\partial z\partial\bar{z}(a)\geq 0$,
which implies that 
$$
\ai\ddbar\varphi(a)\geq 2(g_{a}(0)-\varepsilon)\omega. 
$$
Since $\varepsilon\in (0,g_a(0))$ is arbitrary, it follows that  
$$
\ai\ddbar\varphi(a)\geq 2g_{a}(0)\omega. 
$$

If $g_a(0)=0$, 
%for any $r\in (0,\gamma_a)$, there exists a holomorphic function $f_r$ satisfying $f_r(a)=1$ and 
%\begin{align*}
%	\int_{\Delta(a;r)}|f_r|^2e^{-\varphi}&=L_\varphi(a,r)\pi r^2e^{-\varphi(a)}\\
%	&\leq e^{-g_a(r)r^2}\pi r^2e^{-\varphi(a)}\\
%	&\leq \pi r^2e^{-\varphi(a)}
%\end{align*}
%from Lemma \ref{lem:attain} and the condition that $g_a\geq 0$. 
%Hence, by repeating the above argument, we have that 
%$$
%\ai\ddbar\varphi(a)\geq 0 = 2g_a(0)\omega.
%$$
$\ai\ddbar\varphi(a)\geq 2g_a(0)\omega=0$ follows simply by Theorem \ref{thm:minimalextensionproperty} since $g_a\geq 0$. 

In both cases, 
$$
\ai\ddbar\varphi(a)\geq 2g_a(0)\omega
$$ 
is proved. 

%(2) The proof is almost the same as in (1). 
%Fix an arbitrary point $a\in\Omega$. We also take $\gamma_a$ and an upper semi-continuous function $g_a\colon[0,\gamma_a]\to\R_{\geq 0}$ as in Theorem \ref{mainthm:1jigen}. 
%In the case that $g_a(0)>0$, we take an arbitrary number $\varepsilon\in(0,g_a(0))$. 
%Since $g_a$ is upper semi-continuous, there exists $r_\varepsilon\in(0,\gamma_a)$ such that $g_a(r)<g_a(0)+\varepsilon$ for $r\in(0,r_\varepsilon)$.
%For each $r\in(0,r_\varepsilon)$, we take a holomorphic function $f_r$ on $\Delta(a;r)$ such that $f_r(a)=1$ and 
%$$
%\int_{\Delta(a;r)}|f_r|^2e^{-\varphi}=L_\varphi(a,r)\pi r^2e^{-\varphi(a)}.
%$$
%We define a function $\widetilde{\varphi}_{a,\varepsilon}(z):=\varphi(z)+(g_a(0)+\varepsilon)|z-a|^2$. 
%Then for $r\in(0,r_\varepsilon)$ we have that
\end{proof}

We remark that the estimate in Theorem \ref{mainthm:1jigen} is optimal in the following sense. 

\begin{corollary}\label{cor:douchi}
	Let $\varphi$ be a smooth function and $g$ be a non-negative function on a bounded domain $\Omega\subset \C$. 
	Fix $a\in \Omega$. 
	Then the following properties are equivalent$\colon$
	\begin{enumerate}
		\item For any $\varepsilon>0$, there exists $r_\varepsilon>0$ such that  
		$$
		L_\varphi(a,r)\leq e^{-\max\{ \frac{(g(a)-\varepsilon)}{2}, 0\}r^2}
		$$
		for $r\in(0,r_\varepsilon)$. 
		\item $\ai\ddbar\varphi(a)\geq g(a)\omega$. 
	\end{enumerate} 
\end{corollary}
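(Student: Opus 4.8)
The plan is to prove the two implications separately. The direction (1) $\Rightarrow$ (2) is essentially a corollary of Theorem \ref{mainthm:1jigen}: given the hypothesis, for each fixed $\varepsilon \in (0, g(a))$ one sets the constant function $g_a \equiv \max\{(g(a)-\varepsilon)/2, 0\}$ on a suitable interval $[0,\gamma_a]$ with $\gamma_a < \min\{r_\varepsilon, \delta(a)\}$; this $g_a$ is (trivially) lower semi-continuous and semi-positive, and the hypothesis $L_\varphi(a,r) \le e^{-\max\{(g(a)-\varepsilon)/2,\,0\}r^2}$ is exactly $L_\varphi(a,r) \le e^{-g_a(r)r^2}$. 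Theorem \ref{mainthm:1jigen} then yields $\ai\ddbar\varphi(a) \ge 2g_a(0)\omega = \max\{g(a)-\varepsilon,\,0\}\,\omega$. Letting $\varepsilon \to +0$ gives $\ai\ddbar\varphi(a) \ge g(a)\omega$ (the case $g(a)=0$ being immediate from Theorem \ref{thm:minimalextensionproperty} or Theorem \ref{thm:iikae} applied after noting $L_\varphi \le 1$ near $a$ forces subharmonicity at $a$, but in fact one only needs $\ai\ddbar\varphi(a)\ge 0$, which also follows by the same limiting argument).

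For the converse (2) $\Rightarrow$ (1), the idea is to exploit the extremal characterization of $L_\varphi$ together with Example \ref{ex:optimalexample}, which computes the index of a model quadratic weight. Since $\ai\ddbar\varphi(a) \ge g(a)\omega$ means $\partial^2\varphi/\partial z\partial\bar z (a) \ge g(a)/2$, a Taylor expansion at $a$ gives, for any $\varepsilon > 0$, a radius $r_\varepsilon > 0$ such that on $\Delta(a; r_\varepsilon)$ one has $\varphi(z) \ge \varphi(a) + 2\,\Rea\!\big(\tfrac{\partial\varphi}{\partial z}(a)(z-a) + \tfrac12\tfrac{\partial^2\varphi}{\partial z^2}(a)(z-a)^2\big) + \tfrac{g(a)-\varepsilon}{2}|z-a|^2$ — that is, $\varphi$ is bounded below by a quadratic weight of the form (pluriharmonic part) $+ \lambda|z-a|^2$ with $\lambda = (g(a)-\varepsilon)/2$ (if this is negative there is nothing to prove, so assume $\lambda \ge 0$; the $\varepsilon$ can be chosen so $\lambda>0$). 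Writing $\psi(z)$ for the pluriharmonic real part, so that $e^{-\psi} = |H(z)|^2$ for a zero-free holomorphic $H$ with $|H(a)|^2 = e^{-\psi(a)}$, one has $\int_{\Delta(a;r)}|f|^2 e^{-\varphi} \le \int_{\Delta(a;r)}|f|^2 |H|^2 e^{-\lambda|z-a|^2} e^{-\psi(a)}$ wait — more carefully, one uses the trial function $f(z) = H(z)/H(a)$, which satisfies $f(a) = 1$, and computes $\int_{\Delta(a;r)}|f|^2 e^{-\varphi} \le \frac{1}{|H(a)|^2}\int_{\Delta(a;r)}|H|^2 e^{-\psi} e^{-\lambda|z-a|^2} = e^{\psi(a)}\int_{\Delta(a;r)} e^{-\lambda|z-a|^2}$. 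Hence $L_\varphi(a,r) \le \frac{e^{\psi(a)}\int_{\Delta(a;r)}e^{-\lambda|z-a|^2}}{\pi r^2 e^{-\varphi(a)}}$, and since $e^{-\varphi(a)} = e^{-\psi(a)}$ this equals $\frac{\int_{\Delta(0;r)}e^{-\lambda|z|^2}}{\pi r^2}$, which by Example \ref{ex:optimalexample} is $\frac{1}{\lambda r^2}(1 - e^{-\lambda r^2}) \le e^{-\lambda r^2/2}$ for $r$ small (this last elementary inequality, $\frac{1-e^{-t}}{t} \le e^{-t/2}$ for $t$ near $0$, is precisely Lemma \ref{lem:taylortenkai} restated, or a direct Taylor estimate). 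Taking $r$ in a possibly smaller interval so that this last bound holds gives $L_\varphi(a,r) \le e^{-\lambda r^2/2}$, wait — I want $L_\varphi(a,r)\le e^{-\lambda r^2}$ with $\lambda = (g(a)-\varepsilon)/2$; adjusting by absorbing the factor $1/2$ into a further shrinking of $\varepsilon$ (replace $\varepsilon$ by $\varepsilon/2$ and note $\frac{1-e^{-t}}{t}\le e^{-(1-\varepsilon')t/2}$ near $0$ for any $\varepsilon'>0$), one gets exactly $L_\varphi(a,r) \le e^{-\max\{(g(a)-\varepsilon)/2,\,0\}r^2}$ on $(0, r_\varepsilon)$, as desired.

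The main obstacle I anticipate is the bookkeeping of the error terms in the converse direction — specifically, making sure that the $o(|z-a|^2)$ from the Taylor expansion of $\varphi$, the elementary inequality $\frac{1-e^{-t}}{t} \le e^{-(1-\varepsilon')t/2}$ (valid only near $t = 0$), and the choice of $\varepsilon'$ in terms of $\varepsilon$ all fit together so that the final exponent is \emph{exactly} $\max\{(g(a)-\varepsilon)/2, 0\}r^2$ rather than something with an extra constant. This is purely a matter of choosing the constants in the right order (first fix $\varepsilon$; this determines a target $\lambda$; then choose an auxiliary $\varepsilon'$ small enough that $\lambda(1-\varepsilon')/\text{(stuff)}$ still dominates $(g(a)-\varepsilon)/2$; then shrink $r_\varepsilon$ to make all the $o(\cdot)$ and near-zero inequalities valid). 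The pluriharmonic splitting $\varphi \ge \psi + \lambda|z-a|^2$ with $\psi$ the real part of a holomorphic function works cleanly on the simply connected disc $\Delta(a;r)$, so there is no topological subtlety there; and the extremality of $L_\varphi$ (Lemma \ref{lem:attain}, or just the infimum definition) makes the upper bound via the explicit trial function $H/H(a)$ legitimate. I would present the (1) $\Rightarrow$ (2) direction first as the short one, then carry out the constant-chasing for (2) $\Rightarrow$ (1).
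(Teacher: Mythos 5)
Your direction $(1)\Longrightarrow(2)$ is essentially identical to the paper's: apply Theorem \ref{mainthm:1jigen} with the constant function $g_a\equiv \max\{(g(a)-\varepsilon)/2,0\}$ and let $\varepsilon\to 0$. That part is fine.

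For $(2)\Longrightarrow(1)$, however, there is a factor-of-two error right at the start that propagates and cannot be absorbed by shrinking $\varepsilon$. You assert that $\ai\ddbar\varphi(a)\geq g(a)\omega$ means $\partial^2\varphi/\partial z\partial\bar z(a)\geq g(a)/2$; in the paper's convention (visible in the proof of Theorem \ref{mainthm:1jigen}, where $\partial^2\varphi_{a,\varepsilon}/\partial z\partial\bar z(a)\geq 0$ is read off as $\ai\ddbar\varphi(a)\geq 2(g_a(0)-\varepsilon)\omega$), one has $\ai\ddbar\varphi = \frac{\partial^2\varphi}{\partial z\partial\bar z}\,\ai\dz\wedge\dbarz$, so the correct statement is $\partial^2\varphi/\partial z\partial\bar z(a)\geq g(a)$, with no $1/2$. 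This wrong $1/2$ feeds into your choice $\lambda=(g(a)-\varepsilon)/2$, which is half the value the Taylor expansion actually furnishes. The Gaussian computation then gives $L_\varphi(a,r)\leq \frac{1-e^{-\lambda r^2}}{\lambda r^2}$, and the only true elementary bound near $0$ is $\frac{1-e^{-t}}{t}\leq e^{-(\frac12-\delta)t}$ (note: $\frac{1-e^{-t}}{t}\leq e^{-t}$, and even $\leq e^{-t/2}$, are \emph{false} near $t=0$, since the Taylor coefficients of $t^2$ are $\frac16$ and $\frac18$ respectively). With your $\lambda$ this yields only $L_\varphi(a,r)\lesssim e^{-(g(a)-\varepsilon)r^2/4}$, a factor of two weaker in the exponent than the desired $e^{-(g(a)-\varepsilon)r^2/2}$. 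Replacing $\varepsilon$ by $\varepsilon/2$ does not rescue this: you would need $(1-\varepsilon')(g(a)-\varepsilon/2)\geq 2(g(a)-\varepsilon)$, which fails for small $\varepsilon$. If you instead set $\lambda = g(a)-\tilde\varepsilon$ (the correct coefficient of $|z-a|^2$), the same chain of inequalities gives $L_\varphi(a,r)\leq e^{-(\frac12-\delta)(g(a)-\tilde\varepsilon)r^2}$, and choosing $\delta,\tilde\varepsilon$ small relative to $\varepsilon$ closes the argument. (You also have the trial function inverted: with $e^{-\psi}=|H|^2$ you want $f = H(a)/H$, so that $|f|^2e^{-\varphi}\leq |H(a)|^2 e^{-\lambda|z-a|^2}$; as written $f=H/H(a)$ produces $e^{-2\psi}$, which does not simplify.)

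Once corrected, your route is genuinely different from the paper's. You split $\varphi$ below as (pluriharmonic)\,$+\,\lambda|z-a|^2$, use the explicit trial function $H(a)/H$, appeal to the Gaussian model from Example \ref{ex:optimalexample}, and finish with Lemma \ref{lem:taylortenkai}. The paper instead takes the simpler first-order trial function $f=e^{c(z-a)}$ with $c=\partial\varphi/\partial z(a)$, sets $\Psi=-|f|^2e^{-\varphi}$ and $\widetilde\Psi = \Psi - e^{-\varphi(a)}(g(a)-\varepsilon)|z-a|^2$, verifies $\widetilde\Psi$ is psh near $a$, applies the sub-mean-value inequality, and closes with the trivial bound $1-x\leq e^{-x}$. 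The paper's method avoids both the Gaussian integral and Lemma \ref{lem:taylortenkai} altogether and has no factor-chasing; your method is more explicit about what the extremal weight looks like but forces you through exactly the delicate constants you flagged as the main obstacle.
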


Due to this corollary, we cannot expect that in Theorem \ref{mainthm:1jigen} there is a positive constant $\delta>0$ satisfying $\ai\ddbar\varphi(a)\geq (2+\delta)g_a(0)\omega$. 
If $g$ is a positive function, we can rephrase this corollary as follows. 
\begin{corollary}
Keep the setting. If $g$ is positive, the following are equivalent$\colon$
	\begin{enumerate}\label{cor:douchi2}
		\item For any $\varepsilon\in (0,g(a))$, there exists $r_\varepsilon>0$ such that 
		$$
		L_\varphi(a,r)\leq e^{-\frac{(g(a)-\varepsilon)}{2}r^2}
		$$
		for $r\in(0,r_\varepsilon)$. 
		\item $\ai\ddbar\varphi(a)\geq g(a)\omega$. 
	\end{enumerate}
\end{corollary}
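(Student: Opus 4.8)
The plan is to deduce this from Corollary \ref{cor:douchi}, which itself should follow by combining Theorem \ref{mainthm:1jigen} (for the direction $(1)\Rightarrow(2)$) with a direct $L^2$-estimate computation (for $(2)\Rightarrow(1)$). So I would first establish Corollary \ref{cor:douchi}, and then observe that when $g$ is \emph{positive} the truncation $\max\{(g(a)-\varepsilon)/2,\,0\}$ is simply $(g(a)-\varepsilon)/2$ for all $\varepsilon\in(0,g(a))$, which turns condition (1) of Corollary \ref{cor:douchi} into condition (1) of Corollary \ref{cor:douchi2} verbatim. That last step is purely formal.

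For the direction $(1)\Rightarrow(2)$ of Corollary \ref{cor:douchi}, I would apply Theorem \ref{mainthm:1jigen} with the choice $g_a(r):=\max\{(g(a)-\varepsilon)/2,\,0\}$, a nonnegative constant function (hence lower semi-continuous and semi-positive) on $[0,\gamma_a]$ for a suitable $\gamma_a\le r_\varepsilon$. Then Theorem \ref{mainthm:1jigen} gives $\ai\ddbar\varphi(a)\ge 2g_a(0)\omega=\max\{g(a)-\varepsilon,\,0\}\omega$. Letting $\varepsilon\to+0$ yields $\ai\ddbar\varphi(a)\ge g(a)\omega$ (here one uses that if $g(a)>0$ then eventually $g(a)-\varepsilon>0$, and if $g(a)=0$ the inequality is trivial from $\varphi$ being subharmonic, which is Theorem \ref{thm:minimalextensionproperty} applied via $g_a\equiv0$).

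For the converse $(2)\Rightarrow(1)$, I would run the standard optimal $L^2$-extension estimate against a perturbed weight. Assume $\ai\ddbar\varphi(a)\ge g(a)\omega$, fix $\varepsilon>0$, and set $c:=\max\{(g(a)-\varepsilon)/2,\,0\}$. Since $\ddbar\varphi/\partial z\partial\bar z(a)\ge g(a)/2> (g(a)-\varepsilon)/2\ge c-\varepsilon/4$ (say), continuity of the Laplacian gives $r_\varepsilon>0$ with $\varphi(z)\ge \varphi(a)+2\Rea(\alpha(z-a))+ (2c)|z-a|^2 - o(|z-a|^2)$ on $\Delta(a;r_\varepsilon)$ for the appropriate linear term $\alpha$; more precisely one arranges $\psi(z):=\varphi(z)-2c|z-a|^2$ to be subharmonic near $a$ (possible since its Laplacian at $a$ is $\ge g(a)-2c\ge\varepsilon>0$, shrinking $r_\varepsilon$). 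Applying the optimal $L^2$-extension property (Theorem \ref{thm:minimalextensionproperty}, i.e. the B\l ocki--Guan--Zhou extension) to the subharmonic weight $\psi$ on $\Delta(a;r)$ produces a holomorphic $f$ with $f(a)=1$ and $\int_{\Delta(a;r)}|f|^2 e^{-\psi}\le \pi r^2 e^{-\psi(a)}=\pi r^2 e^{-\varphi(a)}$; since $e^{-\varphi}=e^{-\psi}e^{-2c|z-a|^2}\le e^{-\psi}e^{-2c\cdot 0}$ is not directly useful, instead I keep the weight $e^{2c|z-a|^2}$ on the other side: $\int_{\Delta(a;r)}|f|^2e^{-\varphi}=\int_{\Delta(a;r)}|f|^2e^{-\psi}e^{-2c|z-a|^2}$, and estimating $e^{-2c|z-a|^2}$ by a mean-value/direct integration comparison against $e^{-2cr^2}\cdot(\text{correction}\to 1)$ gives $L_\varphi(a,r)\le e^{-cr^2}(1+o(1))$ as $r\to 0$, which after absorbing the $o(1)$ into a further shrink of $\varepsilon$ (or by replacing $\varepsilon$ with $\varepsilon/2$ from the start) yields $L_\varphi(a,r)\le e^{-\max\{(g(a)-\varepsilon)/2,0\}r^2}$ for $r$ small. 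The delicate bookkeeping — making the perturbed weight genuinely subharmonic on a fixed disc and controlling the $o(1)$ factor uniformly so that it can be swallowed by the $\varepsilon$-slack — is the main obstacle; everything else is the routine optimal-extension machinery already invoked in the paper.
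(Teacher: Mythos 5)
Your reduction of Corollary \ref{cor:douchi2} to Corollary \ref{cor:douchi}, and your $(1)\Rightarrow(2)$ argument (apply Theorem \ref{mainthm:1jigen} with the constant function $g_a\equiv(g(a)-\varepsilon)/2$, then let $\varepsilon\to 0$), both match the paper and are fine.

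The $(2)\Rightarrow(1)$ direction, however, has a genuine gap. You set $c=(g(a)-\varepsilon)/2$, build a subharmonic perturbation $\psi=\varphi-2c|z-a|^2$, apply the optimal $L^2$-extension theorem to $\psi$ to get $f$ with $\int_{\Delta(a;r)}|f|^2e^{-\psi}\le\pi r^2e^{-\varphi(a)}$, and then hope to extract the gain from the splitting $e^{-\varphi}=e^{-\psi}e^{-2c|z-a|^2}$. But the factor $e^{-2c|z-a|^2}$ ranges from $1$ at $z=a$ down to $e^{-2cr^2}$ at the boundary, and the density $|f|^2e^{-\psi}$ produced by the extension theorem comes with no control on where its mass sits; if it concentrates near $a$, the weighted integral is essentially $\int|f|^2e^{-\psi}$ and you only recover $L_\varphi\le 1$. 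The ``mean-value'' step you invoke gives $\int|f|^2e^{-\psi}e^{-2c|z-a|^2}=e^{-2c|\zeta-a|^2}\int|f|^2e^{-\psi}$ for some unknown $\zeta\in\Delta(a;r)$, and there is no reason $|\zeta-a|$ is comparable to $r$. Even in the favorable harmonic case, where $|f|^2e^{-\psi}$ is genuinely constant, the resulting ratio is $\frac{1-e^{-2cr^2}}{2cr^2}$, which is \emph{larger} than $e^{-cr^2}$ for small $r$ (the Taylor difference starts at $+\frac{1}{24}(2cr^2)^2$), so even there the desired bound only appears after a careful absorb-into-$\varepsilon$ argument that your sketch does not carry out.

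The paper's proof of $(2)\Rightarrow(1)$ sidesteps all of this: it does not invoke the Ohsawa--Takegoshi extension at all. Instead it \emph{explicitly} writes down the competitor $f=e^{c(z-a)}$ with $c=\frac{\partial\varphi}{\partial z}(a)$, so that the linear term of $\log(|f|^2e^{-\varphi})$ at $a$ vanishes. It then sets $\Psi=-|f|^2e^{-\varphi}$, computes $\partial^2\Psi/\partial z\partial\bar z(a)=e^{-\varphi(a)}\,\partial^2\varphi/\partial z\partial\bar z(a)\ge e^{-\varphi(a)}g(a)$, perturbs to $\widetilde\Psi=\Psi-e^{-\varphi(a)}(g(a)-\varepsilon)|z-a|^2$ which is subharmonic on a small disc, and applies the plain sub-mean-value inequality to $\widetilde\Psi$. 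That yields
\[
e^{-\varphi(a)}\Bigl(1-\tfrac{(g(a)-\varepsilon)}{2}r^2\Bigr)\ge\frac{1}{\pi r^2}\int_{\Delta(a;r)}|f|^2e^{-\varphi},
\]
and $1-x\le e^{-x}$ finishes. Since $L_\varphi$ is an infimum over \emph{all} extensions, producing one good $f$ suffices; there is no need to produce the Bergman extremal. This is exactly the point of the paper's remark that Corollary \ref{cor:douchi2} (where $g>0$, so $\varepsilon<g(a)$ always) is proved without any appeal to the Ohsawa--Takegoshi theorem. Replace your $(2)\Rightarrow(1)$ step with this explicit construction to close the gap.
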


%We stress that the above corollaries are also used to prove Theorem \ref{mainthm:prekopadouchi}. 
%Here we give a proof. 

\begin{proof}[Proof of Corollary \ref{cor:douchi}]
	$(1)\Longrightarrow (2).$ 
	%If $g(a)=0$, this implication is just a consequence of the minimal extension property. 
	We only need to show the proof in the case that $g(a)>0$.
	If $g(a)>0$, by taking $\varepsilon\in(0,g(a))$, we know that there exists $r_\varepsilon$ such that
	$$
	L_\varphi(a,r)\leq e^{-\frac{(g(a)-\varepsilon)}{2}r^2}
	$$ 
	for $r\in(0,r_\varepsilon)$. 
	Then applying Theorem \ref{mainthm:1jigen} and $\varepsilon\to +0$, we get 
	$$
	\ai\ddbar\varphi(a)\geq (g(a)-\varepsilon)\omega\to g(a)\omega.
	$$
	
	$(2)\Longrightarrow(1).$ 
	If $\varepsilon\geq g(a)$, we need to show $L_\varphi(a,r)\leq 1$. 
	The non-negativity of $g$ implies the plurisubharmonicity of $\varphi$.
	Using the optimal $L^2$-extension theorem \cite{Blo13}, \cite{GZ15}, we can say that $L_\varphi(a,r)\leq 1$ for $r\in (0,\delta(a))$. 
	Then it is enough to show the proof in the situation that $g(a)>0$ and $\varepsilon\in(0,g(a))$. 
	Set $c:= \frac{\partial\varphi}{\partial z}(a)$ and 
	$$
	f:=e^{c(z-a)}.
	$$
	Note that $f$ is a holomorphic function with $f(a)=1$. 
	We also define %$\widetilde{\varphi}$ by $\widetilde{\varphi}(z)=\varphi(z)-(g(a)-\varepsilon)|z-a|^2$ and 
	$\Psi$ by
	$$
	\Psi=-|f|^2e^{-{\varphi}}=-e^{-({\varphi}-c(z-a)-\overline{c(z-a)})}. 
	$$
	Then we can compute 
	\begin{align*}
		\frac{\partial^2\Psi}{\partial z\partial \bar{z}}=&-e^{-({\varphi}-c(z-a)-\overline{c(z-a)})}\left( \frac{\partial{\varphi}}{\partial z}-c\right) \left( \frac{\partial{\varphi}}{\partial \bar{z}}-\overline{c}\right)\\
		&+e^{-({\varphi}-c(z-a)-\overline{c(z-a)})}\frac{\partial^2{\varphi}}{\partial z\partial \bar{z}}.
	\end{align*}
We get 
\begin{align*}
\frac{\partial^2\Psi}{\partial z\partial \bar{z}}(a)&=e^{-\varphi(a)}\frac{\partial^2{\varphi}}{\partial z\partial\bar{z}}(a)\\
%&=e^{-\varphi(a)}\left( \frac{\partial^2{\varphi}}{\partial z\partial\bar{z}}(a)-(g(a)-\varepsilon)\right)\\
&\geq e^{-\varphi(a)}g(a)>0. 
\end{align*}
We denote by $\widetilde{\Psi}$ 
$$
\widetilde{\Psi}:={\Psi}-e^{-\varphi(a)}(g(a)-\varepsilon)|z-a|^2.
$$
It holds that 
\begin{align*}
	\frac{\partial^2\widetilde{\Psi}}{\partial z\partial\bar{z}}(a)&=	\frac{\partial^2{\Psi}}{\partial z\partial\bar{z}}(a)-e^{-\varphi(a)}(g(a)-\varepsilon)\\
	&\geq \varepsilon e^{-\varphi(a)}>0.
\end{align*}
	Hence, there exists $r_\varepsilon$ such that $\widetilde{\Psi}$ is psh on $\Delta(a;r_\varepsilon)$. 
	Then we have that 
	$$
	\frac{1}{\pi r^2}\int_{\Delta(a;r)}\widetilde{\Psi}\geq \widetilde{\Psi}(a)=-e^{-\varphi(a)},
	$$
	that is, 
	$$
	e^{-\varphi(a)}-e^{-\varphi(a)}(g(a)-\varepsilon)\frac{1}{\pi r^2}\int_{\Delta(a;r)}|z-a|^2\geq \frac{1}{\pi r^2}\int_{\Delta(a;r)}|f|^2e^{-\varphi}
	$$
	for $r\in(0, r_\varepsilon)$. 
	In summary, we see that 
	$$
	e^{-\varphi(a)}\left( 1-\frac{(g(a)-\varepsilon)}{2}r^2 \right)\geq \frac{1}{\pi r^2}\int_{\Delta(a;r)}|f|^2e^{-\varphi},
	$$
	which implies that 
	$$
	e^{-\frac{(g(a)-\varepsilon)}{2}r^2}\geq \frac{\int_{\Delta(a;r)}|f|^2e^{-\varphi}}{\pi r^2 e^{-\varphi(a)}}\geq L_\varphi(a,r).
	$$
\end{proof}

%As another formulation of Corollary \ref{cor:douchi2}, we get the following. 

%\begin{corollary}\label{cor:douchianother}
%Let $\varphi$ be a smooth function on a bounded domain $\Omega\subset \C$ and $c>0$ be a positive constant. 
%	Then the following are equivalent: 
%	\begin{enumerate}
%		\item For any $a\in\Omega$, $r\in(0,\delta(a))$ and $\varepsilon\in(0,c)$, $L_\varphi(a,r)\leq e^{-\frac{(c-\varepsilon)}{2}r^2}$.  
%		\item $\ai\ddbar\varphi\geq c\omega$ on $\Omega$. 
%	\end{enumerate}
%\end{corollary}

%\begin{proof}[Proof of Corollary \ref{cor:douchianother}]
%	It is enough to prove $(2)\Longrightarrow(1)$. 
%	We use the same notation and construct the same $\Psi$ as in the proof of Corollary \ref{cor:douchi} and \ref{cor:douchi2}. 
%\end{proof}

The proof of Corollary \ref{cor:douchi2} is the same. 
Note that we do not need to use the Ohsawa--Takegoshi $L^2$-extension theorem in order to prove Corollary \ref{cor:douchi2}, which is one interesting point. 
%We can easily see that Theorem \ref{mainthm:prekopadouchi} is just a consequence of Corollary \ref{cor:douchi}. 

%As another corollary, we obtain the following result. 

% \begin{corollary}\label{cor:flatline}
% 	Keep the setting. 
% 	If $L_\varphi\equiv 1$, $\ai\ddbar\varphi\equiv 0$, that is, $\varphi$ is a harmonic function.
% \end{corollary}

% \begin{proof}
% 	Theorem \ref{thm:iikae} implies that $\ai\ddbar\varphi\geq 0$. 
% 	If $\ai\ddbar\varphi(a)>0$ for some point $a\in\Omega$, we take $c>0$ such that $\ai\ddbar\varphi(a)\geq c\omega$. 
% 	Then there exists $r'>0$ such that for any $r\in(0,r')$ 
% 	$$
% 	L_\varphi(a, r)\leq e^{-\frac{c}{4}r^2}
% 	$$
% 	from Corollary \ref{cor:douchi2}. 
% 	This contradicts $L_\varphi\equiv 1$. 
% \end{proof}

% This corollary implies that if one cannot get any sharper estimates with respect to $\varphi$, $\varphi$ must be harmonic. 
%
Next, we discuss Pr\'ekopa-type theorems.
As an immediate consequence of Corollary \ref{cor:douchi}, we obtain the following result.

\begin{theorem}\label{thm:prekopateiryouka}
	Let $\varphi:U_\tau\times \Omega_z\to \R$ be a function, which is smooth on $\overline{U\times \Omega}$.
	Here $U_\tau\subset\C_\tau$ and $\Omega_z\subset \C^n_z$ are bounded domains. We define the function $\Phi$ on U by 
	$$
	e^{-\Phi(\tau)}:=\int_{z\in\Omega}e^{-\varphi(\tau,z)}.
	$$
	Let $g$ be a non-negative function on $U$ and $\omega=\ai \dtau\wedge \dbartau$. 
	Then the following properties are equivalent$\colon$ 
	\begin{enumerate}
		\item $\ai \ddbar \Phi\geq g\omega$.
		\item For any point $a\in U$ and any $\varepsilon>0$, there exists $r_\varepsilon\in (0,\delta(a))$ such that 
		$$
		L_\Phi(a, r)\leq e^{-\max\{ \frac{(g(a)-\varepsilon)}{2}, 0\}r^2}
		$$
		for $r\in (0, r_\varepsilon)$. 
	\end{enumerate}
\end{theorem}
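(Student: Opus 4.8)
The plan is to read Theorem~\ref{thm:prekopateiryouka} as the pointwise specialization of Corollary~\ref{cor:douchi} to the single marginal weight $\Phi$ on the one-dimensional bounded domain $U\subset\C_\tau$. Once we know that $\Phi$ is a genuine smooth function on $U$, Corollary~\ref{cor:douchi} applies at every $a\in U$ with its base domain taken to be $U$, its weight taken to be $\Phi$, and its non-negative function the given $g$; the two properties listed in Theorem~\ref{thm:prekopateiryouka} are then exactly the two properties appearing in Corollary~\ref{cor:douchi}, quantified over $a\in U$. (Note the potential clash of notation: the domain called ``$\Omega$'' in Corollary~\ref{cor:douchi} plays the role of the base $U$ here, not of the fiber domain $\Omega_z$.)

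First I would check that $\Phi$ is smooth on $U$. Write $e^{-\Phi(\tau)}=\int_{\Omega}e^{-\varphi(\tau,z)}\,\dl(z)$, with $\dl$ Lebesgue measure on $\C^n_z$. Fix $V\Subset U$. Since $\varphi$ is smooth on the \emph{compact} set $\overline{U\times\Omega}$, every $\tau$- and $\bar\tau$-derivative of the integrand $e^{-\varphi(\tau,z)}$ is uniformly bounded on $\overline{V\times\Omega}$, and $\Omega$ has finite Lebesgue measure because it is bounded; hence differentiation under the integral sign is legitimate to all orders, so $\tau\mapsto\int_\Omega e^{-\varphi(\tau,z)}\,\dl(z)$ is $C^\infty$ on $U$. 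It is also strictly positive, being the integral of a positive function over a nonempty open set. Therefore $\Phi=-\log\big(\int_\Omega e^{-\varphi(\cdot,z)}\,\dl(z)\big)$ is smooth on the bounded domain $U$, so $\delta(a)>0$ for every $a\in U$ and the $L^2$-extension index $L_\Phi$ on $U_\delta$ is defined exactly as in Definition~\ref{intro:l2index}.

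Next I would invoke Corollary~\ref{cor:douchi} at a fixed point $a\in U$, applied to the weight $\Phi$: it gives the equivalence of (i) ``for every $\varepsilon>0$ there exists $r_\varepsilon\in(0,\delta(a))$ with $L_\Phi(a,r)\leq e^{-\max\{\frac{g(a)-\varepsilon}{2},0\}r^2}$ for all $r\in(0,r_\varepsilon)$'' and (ii) ``$\ai\ddbar\Phi(a)\geq g(a)\omega$''; the side condition $r_\varepsilon<\delta(a)$ is harmless since $r_\varepsilon$ may always be shrunk. Property (1) of Theorem~\ref{thm:prekopateiryouka} asserts (ii) for every $a\in U$, and property (2) asserts (i) for every $a\in U$, so the equivalence $(1)\Longleftrightarrow(2)$ follows immediately.

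I do not expect a real obstacle here: all the analytic substance is already contained in Corollary~\ref{cor:douchi} (hence in Theorem~\ref{mainthm:1jigen} for the direction producing curvature positivity, and in the optimal Ohsawa--Takegoshi estimate together with the explicit subharmonicity computation for the converse). The only non-formal step is the smoothness of $\Phi$, and this is precisely where the standing hypothesis that $\varphi$ is smooth up to the boundary of $U\times\Omega$ is used: it is what allows the differentiation under the integral sign and hence lets $L_\Phi$ fall under Definition~\ref{intro:l2index}. For orientation one may observe that the degenerate case $g\equiv0$ recovers the statement ``$\Phi$ is subharmonic if and only if it satisfies the optimal $L^2$-extension property'', which is the usual Pr\'ekopa-type conclusion read off for the marginal weight.
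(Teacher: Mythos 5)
Your proposal is correct and matches the paper's own route exactly: the paper introduces Theorem~\ref{thm:prekopateiryouka} with the words ``As an immediate consequence of Corollary~\ref{cor:douchi},'' i.e.\ it is obtained by applying that corollary pointwise to the smooth marginal weight $\Phi$ on $U$. Your added verification that $\Phi$ is indeed smooth (via differentiation under the integral sign, using smoothness of $\varphi$ on $\overline{U\times\Omega}$ and boundedness of $\Omega$) is a useful clarification that the paper leaves implicit but does not change the argument.
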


This theorem can be seen as a quantification of Pr\'ekopa-type theorems. 
Let us explain the reason for that below.

First, we introduce the following fundamental result obtained by Berndtsson. 

\begin{theorem}[\cite{Ber98}]\label{thm:BerndtssonPrekopa}
	Let $\varphi$ be a smooth psh function on $\overline{U\times \Omega}$, where $U_\tau\subset \C$ is a domain and $\Omega_z\subset \C^n$ is a bounded pseudoconvex domain. 
	Assume that $\Omega$ is a Reinhardt domain and $\varphi$ is independent of $\arg(z_j)$ for $j=1, \ldots, n$. 
	Then $\Phi$ defined by 
	$$
	e^{-\Phi(\tau)}=\int_{z\in\Omega}e^{-\varphi(\tau,z)}
	$$
	is psh as well. 
\end{theorem}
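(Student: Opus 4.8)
\textbf{Plan of proof for Theorem \ref{thm:BerndtssonPrekopa} (Berndtsson's Pr\'ekopa theorem).}

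The plan is to deduce the plurisubharmonicity of $\Phi$ from the positivity of a direct image, or more elementarily, directly from the optimal $L^2$-extension property via the framework already set up in this paper. Concretely, I would first observe that, because $\varphi$ is independent of $\arg(z_j)$ and $\Omega$ is Reinhardt, the fiber integral $e^{-\Phi(\tau)}=\int_\Omega e^{-\varphi(\tau,z)}$ can be recognized (up to the constant coming from the angular variables) as a weighted Bergman-type quantity: for fixed $\tau$, the monomial $z\mapsto 1$ is the relevant holomorphic object, and $e^{-\Phi(\tau)}$ is, up to a fixed positive constant, its squared $L^2_{\varphi(\tau,\cdot)}$-norm on $\Omega$. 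The point of the Reinhardt/$\arg$-invariance hypothesis is precisely that this normalization constant does not depend on $\tau$, so that $\Phi$ differs from $\tau\mapsto -\log\|1\|^2_{L^2(\Omega,\varphi(\tau,\cdot))}$ by an additive constant.

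Next I would set up the one-variable statement in $\tau$: to prove $\ai\ddbar\Phi\geq 0$ it suffices, by Theorem \ref{thm:iikae} (equivalently Theorem \ref{thm:minimalextensionproperty}), to check that $\Phi$ satisfies the optimal $L^2$-extension property, i.e. $L_\Phi(a,r)\leq 1$ for every $a\in U$ and every $r$ with $\Delta(a;r)\subset U$. So fix such $a$ and $r$. I must produce a holomorphic function $G$ on $\Delta(a;r)$ with $G(a)=1$ and $\int_{\Delta(a;r)}|G|^2 e^{-\Phi}\leq \pi r^2 e^{-\Phi(a)}$. The natural candidate is obtained by first solving the extension problem fiberwise: by the optimal Ohsawa--Takegoshi theorem of B\l ocki \cite{Blo13} and Guan--Zhou \cite{GZ15} applied to the product $\Delta(a;r)\times\Omega\subset\C_\tau\times\C^n_z$ with the psh weight $\varphi$ and the hyperplane $\{\tau=a\}$, extend the holomorphic function $(z)\mapsto 1$ on $\{a\}\times\Omega$ to a holomorphic function $F(\tau,z)$ on $\Delta(a;r)\times\Omega$ with $F(a,\cdot)\equiv 1$ and $\int_{\Delta(a;r)\times\Omega}|F|^2 e^{-\varphi}\leq \pi r^2 \int_\Omega e^{-\varphi(a,z)} = \pi r^2 e^{-\Phi(a)}$. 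Then I would define $G(\tau)$ so that, in each fiber, $F(\tau,\cdot)$ is compared to the extremal function: more precisely, set $G(\tau):=$ the value at $z=0$-type normalization, or better, use that $\int_\Omega |F(\tau,z)|^2 e^{-\varphi(\tau,z)}\,dV(z)\geq |c_F(\tau)|^2 e^{-\Phi(\tau)}$ where $c_F(\tau)$ is the ``constant term'' of $F(\tau,\cdot)$ in the $L^2_\varphi(\tau)$-orthogonal decomposition — this is exactly where the Reinhardt symmetry gives that $1$ is orthogonal to all other monomials, so $c_F(\tau) = \frac{\int_\Omega F(\tau,z)e^{-\varphi(\tau,z)}}{\int_\Omega e^{-\varphi(\tau,z)}}$ is holomorphic in $\tau$ with $c_F(a)=1$. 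Taking $G:=c_F$, the fiberwise inequality integrates to $\int_{\Delta(a;r)}|G|^2 e^{-\Phi}\leq \int_{\Delta(a;r)\times\Omega}|F|^2 e^{-\varphi}\leq \pi r^2 e^{-\Phi(a)}$, which is the desired optimal estimate.

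The main obstacle I anticipate is verifying the fiberwise inequality $\int_\Omega |F(\tau,z)|^2 e^{-\varphi(\tau,z)}\,dV(z)\geq |c_F(\tau)|^2 e^{-\Phi(\tau)}$ together with the holomorphicity of $\tau\mapsto c_F(\tau)$; both rest on the orthogonality of $1$ to the nonconstant monomials in $L^2(\Omega, e^{-\varphi(\tau,\cdot)})$, which in turn is exactly the content of the Reinhardt domain plus $\arg(z_j)$-independence hypotheses (the angular integrations kill all cross terms). One must also be slightly careful that $F$ produced by Ohsawa--Takegoshi is genuinely $L^2$ fiber-by-fiber for a.e. $\tau$ and that $c_F$ extends holomorphically across the exceptional set, but this follows from Fubini and the $L^2$ bound plus a standard removable-singularity argument. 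Once these points are in place, Theorem \ref{thm:iikae} applied on $U$ yields $\ai\ddbar\Phi\geq 0$, i.e. $\Phi$ is psh. (Alternatively, one can phrase the whole argument as the Griffiths semipositivity of the rank-one direct image $\pi_\star(\mathcal{O}\, e^{-\varphi})$ and invoke Theorem \ref{mainthm:directimage} with $g\equiv 0$, but the direct $L^2$-extension route above is the most self-contained.)
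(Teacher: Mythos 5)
Your proof is correct and reaches the same endpoint as the paper's own sketch of this theorem: both reduce the plurisubharmonicity of $\Phi$ to verifying $L_\Phi(a,r)\le 1$ on each $\Delta(a;r)\subset U$ (via Theorem~\ref{thm:iikae}/\ref{thm:minimalextensionproperty}), and both obtain the required one-variable extension of $1$ from a full-dimensional Ohsawa--Takegoshi extension on $\Delta(a;r)\times\Omega$ by exploiting the Reinhardt symmetry. The step where you diverge from the paper is the mechanism for extracting the $\tau$-only function. The paper first passes to the \emph{minimum-norm} $L^2$ extension, appeals to its uniqueness to argue it inherits the $\arg(z_j)$-invariance of $\varphi$, and then notes that a function holomorphic in $z$ and independent of $\arg(z_j)$ must be independent of $z$ altogether, so the minimal extension is already a function of $\tau$. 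You instead keep an arbitrary OT extension $F$ and project fiberwise onto the constants, observing that by orthogonality of distinct monomials in $L^2(\Omega,e^{-\varphi(\tau,\cdot)})$ the projection coefficient $c_F(\tau)$ is precisely the zeroth Laurent coefficient $F_0(\tau)$ of $F(\tau,\cdot)$ --- hence holomorphic in $\tau$ with $c_F(a)=1$ --- and then apply Bessel's inequality fiberwise before integrating in $\tau$. The two routes are mathematically equivalent (both amount to averaging over the torus action), but yours has the minor advantage of not needing to discuss existence, uniqueness, or invariance of the minimum-norm extension: Bessel's inequality quietly encodes the fact that torus-averaging decreases the $L^2$-norm, and the zeroth Laurent coefficient is manifestly holomorphic by a contour-integral formula. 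Both arguments use the Reinhardt plus $\arg$-invariance hypotheses in the same essential way, namely to make the constants an orthogonal summand in each fiber Hilbert space.
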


Pr\'ekopa's theorem is originally a result in convex geometry.
The statement is that if $\varphi\colon\R_t\times\R^n_x\to \R$ is a convex function, then the function $\Phi\colon\R\to \R$ defined by $e^{-\Phi(t)}:=\int_{x\in\R^n}e^{-\varphi(t,x)}$ is also convex. 
We call the above type theorem {\it Pr\'ekopa-type theorem} in this article. 
We remark that the smoothness of $\varphi$ and the boundedness of $\Omega$ are not necessary due to an approximate argument, but we go on this setting for simplicity without loss of generality. 
%Here we explain the reason why Theorem \ref{mainthm:prekopadouchi} recovers the above theorem. 

Actually, we can see that Berndtsson's theorem above can be reduced to the equivalence of the plurisubharmonicity of the weight and the optimality of the Ohsawa--Takegoshi $L^2$-extension theorem. 
Here we explain the reason. 
Fix $a\in U$ and $r\in (0, \delta(a))$. 
If $\varphi$ is psh, thanks to the usual optimal $L^2$-extension theorem \cite{Blo13}, \cite{GZ15}, we get a holomorphic function $f(\tau,z)$ on $\Delta(a;r)\times \Omega$ such that $f(a, z)\equiv 1$ and 
\begin{equation*}%\label{eq:BerndtssonPrekopa}
\int_{\Delta(a;r)\times \Omega}|f(\tau,z)|^2e^{-\varphi(\tau,z)}\leq \pi r^2 \int_\Omega e^{-\varphi(a, z)}.
\end{equation*}
We take $f$ which has the minimum $L^2$-norm.
This minimum extension is uniquely determined due to the following reason. 
Let $A^2(\Delta(a;r)\times \Omega, \varphi)
%\{ f\in \mathcal{O}(\Delta(a;r)\times \Omega) \mid \int_{\Delta(a;r)\times \Omega}|f(\tau,z)|^2e^{-\varphi(\tau,z)}<+\infty \}
=:A$ and $H:=\{ f\in A\mid f|_{\{a\}\times \Omega}\equiv 0\}$. 
We write $A=H\oplus H^\bot$. Then for above $f$, $\pi_{H^\bot}(f)$ is the unique extension with minimum $L^2$-norm.
We simply write $f$ for $\pi_{H^\bot}(f)$. 
Following the argument in \cite{Ber98}, we can say that the minimum extension $f$ has the same invariance property as of $\varphi$, that is, $f$ is independent of $\arg(z_j)$ for $j=1, \ldots, n$. 
Since $f$ is holomorphic in $z_j$, $f$ is also independent of $z_j$, which implies that $f$ is just a function of $\tau$. 
We also denote it by $f$. 
Then 
% $$
% \frac{\int_{\Delta(a;r)}|f(\tau)|^2e^{-\Phi(\tau)}}{\pi r^2 e^{-\Phi(a)}}\leq 1
% $$
% holds, which asserts that $L_\Phi(a, r)\leq 1$. 
\[
	\int_{\Delta(a;r)}|f(\tau)|^2e^{-\Phi(\tau)} \leq \pi r^2 e^{-\Phi(a)},
\]
that is, $\Phi$ satisfies the optimal $L^2$-extension property $L_\Phi(a,r)\leq 1$. 
Hence, $\Phi$ is psh. 
In this sense, Theorem \ref{thm:prekopateiryouka} indicates a direction of quantification for Pr\'ekopa's theorem.
Indeed, if sharper estimates can be obtained for each fiber, the strict positivity of $\Phi$ can also be determined accordingly. Conversely, if $\Phi$ has strict positivity, local sharper estimates can be obtained for each fiber.

%Along this line, by using our main theorem and Corollary \ref{cor:douchi} and \ref{cor:douchi2}, we obtain a quantification of Pr\'ekopa-type result as follows. 
% By taking $g\equiv 0$ and using $(2)\Longrightarrow(1)$ in Theorem \ref{mainthm:prekopadouchi} (which is just the optimal $L^2$-extension property), we can say that $\Phi$ is psh. 
% Hence, Theorem \ref{mainthm:prekopadouchi} is one improvement of Theorem \ref{thm:BerndtssonPrekopa}, which gives the quantitative relationship between the positivity of curvature and the sharpness of the $L^2$-estimate. 

As another concrete example, sharper estimates can be systematically constructed as follows.

%As another application of Theorem \ref{mainthm:prekopadouchi}, we give a new example of sharper estimates. 

\begin{theorem}\label{thm:prekopatype}
	Let $\varphi:U_\tau\times \Omega_z\to \R$ be a psh function, which is smooth on $\overline{U\times \Omega}$.
	Here $U_\tau\subset\C_\tau$ and $\Omega_z\subset \C^n_z$ are bounded domains. 
	%Suppose that there exists an integrable function $F$ on $\Omega$ such that 
	%$$
	%\left| \frac{\partial}{\partial \tau}\left( e^{-\varphi(\tau,z)}\right)\right|\leq F(z)
	%$$
	%for $(\tau,z)\in U\times \Omega$. 
	%Suppose that $\varphi(\tau,z)$ is independent of $\arg z_j$ for each $1\leq j\leq n$, where $z=(z_1, \ldots, z_n)$, and $\int_{z\in\Omega} e^{-\varphi(\tau,z)}<+\infty$ for each $\tau\in U$. 
	We also assume that $\varphi$ satisfies  
	$$
	\frac{\partial^2\varphi}{\partial\tau\partial\bar{\tau}} (\tau,z)-\left|\frac{\partial\varphi}{\partial\tau}(\tau,z)\right|^2\geq g(\tau),
	$$
	where $g$ is a non-negative function on $U$.
	Let $\Phi:U\to \R$ be the function defined by 
	$$
	e^{-\Phi(\tau)}:=\int_{z\in\Omega}e^{-\varphi(\tau,z)}
	$$
	and $G:U\to \R$ be the function defined by
	$$
	G(\tau)=g(\tau)+\frac{\left| \int_{z\in \Omega}\frac{\partial\varphi}{\partial\tau}(\tau,z)e^{-\varphi(\tau,z)}\right|^2}{\left( \int_{z\in\Omega} e^{-\varphi(\tau,z)}\right)^2}. 
	$$
	Then
	for any point $a\in U$ and any $\varepsilon>0$, there exists $r_\varepsilon\in (0,\delta(a))$ such that for $r\in (0,r_\varepsilon)$
	\begin{equation*}\label{eq:hyouka}
		 L_\Phi(a,r) \leq e^{-\max\{\frac{(G(a)-\varepsilon)}{2},0\}r^2},
	\end{equation*}
that is, there exists a holomorphic function $f_r$ on $\Delta(a;r)$ satisfying $f_r(a)=1$ and 
$$
\int_{\Delta(a;r)\times \Omega} |f_r(\tau)|^2e^{-\varphi(\tau,z)}\leq e^{-\max\{\frac{(G(a)-\varepsilon)}{2},0\}r^2}\pi r^2 \int_{\Omega}e^{-\varphi(a,z)}.
$$
\end{theorem}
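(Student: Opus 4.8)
The plan is to deduce Theorem \ref{thm:prekopatype} from the equivalence in Corollary \ref{cor:douchi}, applied with the potential $\Phi$ on $U$ in the role of the weight and $G$ in the role of the lower bound. First I would record that the hypotheses of that corollary are genuinely met: since $\varphi$ is smooth on the compact set $\overline{U\times\Omega}$ and $\Omega$ is bounded, the function $I(\tau):=\int_{z\in\Omega}e^{-\varphi(\tau,z)}$ is smooth and strictly positive, hence $\Phi=-\log I$ is smooth on $U$; and $G$, being the sum of $g\geq 0$ and a squared modulus, is smooth and non-negative. So it suffices to verify condition (2) of Corollary \ref{cor:douchi}, namely $\ai\ddbar\Phi(a)\geq G(a)\,\omega$ for every $a\in U$, where $\omega=\ai\,\dtau\wedge\dbartau$.

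The heart of the matter is then the Hessian computation for $\Phi$, which is essentially the one underlying Berndtsson's proof of Theorem \ref{thm:BerndtssonPrekopa}. Differentiating under the integral sign, which is legitimate by smoothness on the compact closure, I get $\partial_\tau\Phi = I^{-1}\int_{z\in\Omega}(\partial_\tau\varphi)e^{-\varphi}$, and differentiating once more in $\bar\tau$ (quotient and chain rules, together with the fact that $\varphi$ is real, so $(\partial_\tau\varphi)(\partial_{\bar\tau}\varphi)=|\partial_\tau\varphi|^2$) I obtain
\[
\frac{\partial^2\Phi}{\partial\tau\partial\bar\tau}
=\frac{\int_{z\in\Omega}\bigl(\tfrac{\partial^2\varphi}{\partial\tau\partial\bar\tau}-|\partial_\tau\varphi|^2\bigr)e^{-\varphi}}{\int_{z\in\Omega}e^{-\varphi}}
+\frac{\bigl|\int_{z\in\Omega}(\partial_\tau\varphi)e^{-\varphi}\bigr|^2}{\bigl(\int_{z\in\Omega}e^{-\varphi}\bigr)^2}.
\]
The second term is exactly $G(\tau)-g(\tau)$. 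For the first term I would use the hypothesis $\tfrac{\partial^2\varphi}{\partial\tau\partial\bar\tau}(\tau,z)-|\partial_\tau\varphi(\tau,z)|^2\geq g(\tau)$ pointwise in $z$ and integrate against the probability measure with density $e^{-\varphi(\tau,\cdot)}/I(\tau)$ on $\Omega$, which shows that term is $\geq g(\tau)$. Adding the two bounds gives $\partial^2\Phi/\partial\tau\partial\bar\tau\geq G(\tau)$, i.e. $\ai\ddbar\Phi\geq G\,\omega$ on $U$, which is precisely condition (2).

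It remains to translate the conclusion back to the fibered picture. Corollary \ref{cor:douchi}, via the implication $(2)\Rightarrow(1)$, yields for each $a\in U$ and $\varepsilon>0$ some $r_\varepsilon\in(0,\delta(a))$ with $L_\Phi(a,r)\leq e^{-\max\{(G(a)-\varepsilon)/2,0\}r^2}$ for $r\in(0,r_\varepsilon)$, which is the first assertion. For the reformulation, Lemma \ref{lem:attain} furnishes a holomorphic $f_r$ on $\Delta(a;r)$ with $f_r(a)=1$ and $\int_{\Delta(a;r)}|f_r|^2e^{-\Phi}=L_\Phi(a,r)\,\pi r^2e^{-\Phi(a)}$; substituting $e^{-\Phi(\tau)}=\int_{z\in\Omega}e^{-\varphi(\tau,z)}$ and applying Fubini turns this into $\int_{\Delta(a;r)\times\Omega}|f_r(\tau)|^2e^{-\varphi(\tau,z)}\leq e^{-\max\{(G(a)-\varepsilon)/2,0\}r^2}\pi r^2\int_{\Omega}e^{-\varphi(a,z)}$, as desired.

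The logical skeleton is short because all the genuinely hard analysis is packaged into Corollary \ref{cor:douchi}; the only step needing a little care is the bookkeeping in the second $\bar\tau$-derivative — tracking the three terms produced by the quotient and chain rules — together with the verification that $\Phi$ is smooth and $G$ non-negative so that Corollary \ref{cor:douchi} genuinely applies. It is worth noting that the plurisubharmonicity of $\varphi$ is not actually invoked in the estimate itself: it only makes the assumption $g\geq 0$ natural and places the result in the Pr\'ekopa-type circle of ideas alongside Theorem \ref{thm:BerndtssonPrekopa}.
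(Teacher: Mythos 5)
Your proposal is correct and follows essentially the same route as the paper: both perform the same second-derivative computation for $\Phi=-\log\int_\Omega e^{-\varphi}$ to get $\ai\ddbar\Phi\geq G\,\omega$, and then invoke the equivalence between curvature lower bounds and sharper $L^2$-extension estimates (you cite Corollary \ref{cor:douchi} directly while the paper cites its fibered restatement, Theorem \ref{thm:prekopateiryouka}, which is itself an immediate consequence of that corollary). The only cosmetic difference is that you spell out the passage from the $L_\Phi$ bound to the explicit Fubini-style inequality via Lemma \ref{lem:attain}, which the paper leaves implicit.
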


\begin{proof}[Proof of Theorem \ref{thm:prekopatype}]
	Since 
	$$
	\Phi(\tau)=-\log \int_{\Omega}e^{-\varphi(\tau,z)}, 
	$$
	we can see that 
	\begin{align*}
		\frac{\partial^2\Phi}{\partial\tau\partial\bar{\tau}}&=\frac{\int_\Omega \left( \frac{\partial\varphi}{\partial\tau\partial\bar{\tau}}- \left| \frac{\partial\varphi}{\partial\tau}\right|^2\right)e^{-\varphi}}{\int_\Omega e^{-\varphi}}+\frac{\left| \int_{\Omega}\frac{\partial\varphi}{\partial\tau}e^{-\varphi}\right|^2}{\left( \int_\Omega e^{-\varphi}\right)^2}\\
		&\geq g+\frac{\left| \int_{\Omega}\frac{\partial\varphi}{\partial\tau}e^{-\varphi}\right|^2}{\left( \int_\Omega e^{-\varphi}\right)^2}=G
	\end{align*}
by simple computation. Note that $G\geq g$ and $G$ is a non-negative function as well.
As a corollary of Theorem \ref{thm:prekopateiryouka}, we can say that for any $a\in U$ and any $\varepsilon>0$, there exists $r_\varepsilon\in(0,\delta(a))$ such that 
$$
L_\Phi(a,r) \leq e^{-\max\{\frac{(G(a)-\varepsilon)}{2},0\}r^2}
$$
for $r\in(0,r_\varepsilon)$. 
\end{proof}

\begin{remark}
	If we only assume that 
	$$
	\frac{\partial^2\varphi}{\partial\tau\partial\bar{\tau}}\geq g
	$$
	in Theorem \ref{thm:prekopatype}, 
	the plurisubharmonicity of $\Phi$ does not follow. 
	This type of example was found by Kiselman \cite{Kis78}.
	Let $\varphi(\tau,z):=|\tau-\bar{z}|^2-|\tau|^2$ be a function on $\C\times \C$. Since $\varphi(\tau,z)=|z|^2-2\Rea(\tau z)$, $\varphi$ is psh and satisfies 
	$$
	\frac{\partial^2\varphi}{\partial\tau\partial\bar{\tau}}\geq 0.
	$$
	Then $\Phi$ defined by 
	$$
	e^{-\Phi(\tau)}=\int_{z\in\C}e^{-\varphi(\tau,z)}
	$$
	is equal to $\Phi(\tau)=-|\tau|^2-C$ for some constant $C$, which is not clearly psh and does not satisfy $L_\Phi\leq 1$. 
	If we want to obtain the plurisubharmonicity of $\Phi$ or the sharper estimate, we need to assume that, for instance, $\ai\ddbar\varphi$ is ``sufficiently" positive such as in Theorem \ref{thm:prekopatype} or  $\varphi$ is invariant under some group actions such as in Theorem \ref{thm:BerndtssonPrekopa} (for further studies, cf. \cite{DZZ14}).
\end{remark}

\section{$L^2$-extension indices for vector bundles and curvature positivity}\label{sec:koujigen}

In this section, we establish a higher rank analogue of the results in Section \ref{sec:1jigen}.
First, we give a proof of Theorem \ref{mainthm:koujigen}.
The proof is almost the same as the proof of Theorem \ref{mainthm:1jigen}. 
One part of the following proof is inspired by the argument in \cite[Theorem 6.4]{DWZZ18}.

\begin{proof}[Proof of Theorem \ref{mainthm:koujigen}]
	Take $a\in\Omega$, $\gamma_a$ and $g_a$ as in Theorem \ref{mainthm:koujigen}. 
	If $g(a)>0$, we take an arbitrary $\varepsilon\in(0,g_a(0))$.
	We also take $r_\varepsilon\in(0,\gamma_a)$ such that $g_a(0)-\varepsilon<g_a(r)$ for $r\in(0, r_\varepsilon)$.
	
	We define $\widetilde{h}:=he^{2(g_a(0)-\varepsilon)|z-a|^2}$.
	Then its dual metric is $\widetilde{h}^\star=h^\star e^{-2(g_a(0)-\varepsilon)|z-a|^2}$.
	We are going to prove that for any local holomorphic section $u$ of $E^\star$ on an open neighborhood of $a$, $|u|^2_{\widetilde{h}^\star}$ is psh. 
	We may assume that $u(a)\neq 0$. 
	By taking $r_\varepsilon$ small enough if necessary, we can assume that $u$ is defined on $\Delta(a;r_\varepsilon)$ and $u\neq 0$ there.

	By definition, there exists $\xi\in E_a$ such that $|\xi|_{\widetilde{h}(a)}=|\xi|_{h(a)}=1$ and $|u(a)|_{\widetilde{h}^\star(a)}=|\langle u(a), \xi\rangle|$.
	From Lemma \ref{lem:vectorattain}, for each $r\in(0, r_\varepsilon)$, there exists $s_r\in A^2(\Delta(a;r),\varphi)$ satisfying $s_r(a)=\xi$ and 
	$$
	L_h(a,r, \xi)=\frac{\int_{\Delta(a;r)}|s_r|^2_h}{\pi r^2 |\xi|^2_{h(a)}}=\frac{\int_{\Delta(a;r)}|s_r|^2_h}{\pi r^2}. 
	$$
	Since $\log |\langle u, s_r\rangle|$ is psh, thanks to the optimal $L^2$-extension theorem \cite{Blo13}, \cite{GZ15}, there exists a holomorphic function $f_r$ on $\Delta(a;r)$ satisfying $f_r(a)=1$ and 
	$$
	\int_{\Delta(a;r)} |f_r|^2e^{-\log |\langle u, s_r\rangle|}\leq \pi r^2 e^{-\log |\langle u(a), \xi\rangle|}=\pi r^2e^{-\log |u(a)|_{\widetilde{h}^\star(a)}}.  
	$$
	We have 
	$
	|u|_{\widetilde{h}^\star}\geq {|\langle u, s_r\rangle|}/{|s_r|_{\widetilde{h}}}, 
	$
	which implies that 
	$$
	e^{-\frac{1}{2}\log |u|_{\widetilde{h}^\star}}\leq |s_r|^{1/2}_{\widetilde{h}}e^{-\frac{1}{2}\log |\langle u, s_r\rangle|}.
	$$
	Then it holds that 
	\begin{align*}
		\int_{\Delta(a;r)}|f_r| e^{-\frac{1}{2}\log |u|_{\widetilde{h}^\star}}&\leq \int_{\Delta(a;r)}|f_r| |s_r|^{1/2}_{\widetilde{h}} e^{-\frac{1}{2}\log |\langle u,s_r\rangle|}\\
		&\leq \left( \int_{\Delta(a;r)}|s_r|_{\widetilde{h}}\right)^{1/2}\left( \int_{\Delta(a;r)} |f_r|^2e^{-\log |\langle u,s_r\rangle|} \right)^{1/2}\\		
		&\leq \left( \int_{\Delta(a;r)}|s_r|_he^{(g_a(0)-\varepsilon)|z-a|^2}\right)^{1/2}\sqrt{\pi}re^{-\frac{1}{2}\log |u(a)|_{\widetilde{h}^\star(a)}}\\
		&\leq \left( \int_{\Delta(a;r)}|s_r|^2_h\right)^{1/4}\left( \int_{\Delta(a;r)}e^{2(g_a(0)-\varepsilon)|z-a|^2} \right)^{1/4}\sqrt{\pi}re^{-\frac{1}{2}\log |u(a)|_{\widetilde{h}^\star(a)}}\\
		&\leq \sqrt[4]{\pi}\sqrt{r}\sqrt{|\xi|_{h(a)}}e^{-\frac{1}{4}g_a(r)r^2}{\sqrt[4]{\pi}}\left( \frac{e^{2(g_a(0)-\varepsilon)r^2}-1}{2(g_a(0)-\varepsilon)}\right)^{1/4}\sqrt{\pi}re^{-\frac{1}{2}\log |u(a)|_{\widetilde{h}^\star(a)}}\\
		&=\pi r^2e^{-\frac{1}{4}g_a(r)r^2}\left( \frac{e^{2(g_a(0)-\varepsilon)r^2}-1}{2(g_a(0)-\varepsilon)r^2}\right)^{1/4}e^{-\frac{1}{2}\log |u(a)|_{\widetilde{h}^\star(a)}}.
	\end{align*}
Here we use the assumption that $L_h(a,r,\xi)\leq e^{-g_a(r)r^2}$. 
By repeating the same argument as in the proof of Theorem \ref{mainthm:1jigen}, for $\delta>0$, there exists $r_\delta>0$ such that 
$$
\left( \frac{e^{2(g_a(0)-\varepsilon)r^2}-1}{2(g_a(0)-\varepsilon)r^2}\right)^{1/4}\leq e^{(\frac{1}{4}+\frac{\delta}{2})(g_a(0)-\varepsilon)r^2} 
$$
for $r\in (0, r_\delta)$. 
Letting $r_{\varepsilon,\delta}:=\min\{ r_\varepsilon, r_\delta\}$, we have that 
\begin{align*}
	\int_{\Delta(a;r)}|f_r| e^{-\frac{1}{2}\log |u|_{\widetilde{h}^\star}}&\leq \pi r^2e^{-\frac{1}{4}g_a(r)r^2}\left( \frac{e^{2(g_a(0)-\varepsilon)r^2}-1}{2(g_a(0)-\varepsilon)r^2}\right)^{1/4}e^{-\frac{1}{2}\log |u(a)|_{\widetilde{h}^\star(a)}}\\
	&\leq \pi r^2e^{-\frac{1}{4}g_a(r)r^2}e^{(\frac{1}{4}+\frac{\delta}{2})(g_a(0)-\varepsilon)r^2} e^{-\frac{1}{2}\log |u(a)|_{\widetilde{h}^\star(a)}}\\
	&\leq \pi r^2 e^{\frac{r^2}{4}(g_a(0)-\varepsilon-g_a(r))}e^{\frac{\delta}{2}g_a(0)r^2}e^{-\frac{1}{2}\log |u(a)|_{\widetilde{h}^\star(a)}}\\
	&\leq \pi r^2 e^{-\frac{1}{2}\log |u(a)|_{\widetilde{h}^\star(a)}}e^{\frac{\delta}{2}g_a(0)r^2}.
\end{align*}
Note that $\frac{1}{2}\log |u|_{\widetilde{h}^\star}$ satisfies the same inequality as (\ref{ineq:1})-(\ref{ineq:5}).
Hence, by repeating the same argument as in the proof of Theorem \ref{mainthm:1jigen}, we can conclude that 
$\log |u|_{\widetilde{h}^\star}$ is psh, that is, $\widetilde{h}$ is Griffiths semi-positive. 
Then it holds that 
$$
\ai\Theta_h(a)\geq 2(g_a(0)-\varepsilon)\omega\otimes \Id_E.
$$
Taking $\varepsilon\to+0$, we finish the proof. 

\end{proof}

As an application, we can establish the higher rank analogue of Corollary \ref{cor:douchi2}. 
We also remark that the following corollary can be used to prove Theorem \ref{mainthm:directimage}. 

\begin{corollary}\label{cor:douchivector}
	Let $\Omega$, $E$ and $h$ be the same things as in Theorem \ref{mainthm:koujigen}.
	We also let $g$ be a semi-positive function on $\Omega$.  
	Fix $a\in \Omega$. 
	Then the following are equivalent$\colon$
	\begin{enumerate}
		\item For any $\varepsilon>0$ and any $\xi\in E_a\sasyugo \{0\}$, there exists $r_\varepsilon\in(0,\delta(a))$ such that 
		$$
		L_h(a, r, \xi)\leq e^{-\max\{\frac{(g(a)-\varepsilon)}{2},0\}r^2}
		$$
		for $r\in (0, r_\varepsilon)$. 
		\item $\ai\Theta_h(a)\geq g(a)\omega \otimes \Id_E$.
	\end{enumerate}
\end{corollary}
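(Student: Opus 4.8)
The plan is to prove the two implications separately. The direction $(1)\Rightarrow(2)$ will be a direct appeal to Theorem~\ref{mainthm:koujigen}, and the converse $(2)\Rightarrow(1)$ will reduce, fibre by fibre, to the one-dimensional computation carried out in the proof of Corollary~\ref{cor:douchi}. For $(1)\Rightarrow(2)$: fix $\varepsilon>0$, and apply Theorem~\ref{mainthm:koujigen} at the point $a$ with the constant function $g_a\equiv\max\{(g(a)-\varepsilon)/2,\,0\}$ on $[0,\gamma_a]$, $\gamma_a\in(0,r_\varepsilon)$; this $g_a$ is semi-positive and trivially lower semi-continuous, and hypothesis~(1) is exactly the bound $L_h(a,r,\xi)\le e^{-g_a(r)r^2}$ for all $\xi\in E_a\setminus\{0\}$, $r\in(0,\gamma_a)$ that the theorem requires (its proof uses the hypothesis only at $a$, and one direction $\xi$ at a time, so $r_\varepsilon$ may depend on $\xi$). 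Hence $\ai\Theta_h(a)\ge 2g_a(0)\,\omega\otimes\Id_E=\max\{g(a)-\varepsilon,\,0\}\,\omega\otimes\Id_E$, and letting $\varepsilon\to+0$ and using $g\ge0$ gives $\ai\Theta_h(a)\ge g(a)\,\omega\otimes\Id_E$.

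For $(2)\Rightarrow(1)$ I would first dispose of the easy ranges of $\varepsilon$: when $g(a)>0$, a bound with parameter $\varepsilon'\in(0,g(a))$ forces the weaker bounds for all $\varepsilon\ge\varepsilon'$, so it suffices to treat $\varepsilon\in(0,g(a))$; and when $g(a)=0$, condition~(1) merely asks $L_h(a,r,\xi)\le1$ for small $r$, which is handled exactly as the corresponding case of Corollary~\ref{cor:douchi} (via Griffiths semi-positivity of $(E,h)$ and the optimal $L^2$-extension property for Griffiths semi-positive bundles). So assume $g(a)>0$, fix $\varepsilon\in(0,g(a))$ and $\xi\in E_a\setminus\{0\}$, choose a local holomorphic frame of $E$ that is \emph{normal} at $a$ (i.e.\ $h_{\lambda\mu}(a)=\delta_{\lambda\mu}$, $dh_{\lambda\mu}(a)=0$), and let $s$ be the holomorphic section with constant coefficients equal to $\xi$ in that frame; then $s(a)=\xi$ and the $(1,0)$-part $D'_h s$ of the Chern connection vanishes at $a$. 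From the Bochner--Kodaira identity for a holomorphic section,
\[
\ai\,\ddbar\bigl(-|s|^2_h\bigr)=\langle\ai\Theta_h s,s\rangle_h-\ai\{D'_h s,D'_h s\}_h,
\]
whose last term is a semi-positive $(1,1)$-form vanishing at $a$ by the choice of $s$, evaluation at $a$ and~(2) give $\ai\,\ddbar(-|s|^2_h)(a)=\langle\ai\Theta_h(a)\xi,\xi\rangle_{h(a)}\ge g(a)\,|\xi|^2_{h(a)}\,\omega$.

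From here the argument is that of $(2)\Rightarrow(1)$ in Corollary~\ref{cor:douchi}, with $-|s|^2_h$ in place of $-|f|^2e^{-\varphi}$; no linear correction $e^{c(z-a)}$ is needed because $s$ has been chosen so that $D'_h s(a)=0$. Put $\Psi:=-|s|^2_h$ and $\widetilde{\Psi}:=\Psi-|\xi|^2_{h(a)}(g(a)-\varepsilon)|z-a|^2$; by the Hessian bound above, $\partial^2\widetilde{\Psi}/\partial z\partial\bar{z}(a)\ge\varepsilon\,|\xi|^2_{h(a)}>0$, so $\widetilde{\Psi}$ is psh on a disc $\Delta(a;r_\varepsilon)\subset\Omega$ on which $s$ is defined. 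For $r\in(0,r_\varepsilon)$ the sub-mean value inequality for $\widetilde{\Psi}$ over $\Delta(a;r)$, together with $\frac{1}{\pi r^2}\int_{\Delta(a;r)}|z-a|^2=\frac{r^2}{2}$ and $1-x\le e^{-x}$, gives $\int_{\Delta(a;r)}|s|^2_h\le e^{-(g(a)-\varepsilon)r^2/2}\,\pi r^2\,|\xi|^2_{h(a)}$; since $s\in A^2(\Delta(a;r),h)$ with $s(a)=\xi$, this is exactly $L_h(a,r,\xi)\le e^{-\max\{(g(a)-\varepsilon)/2,\,0\}r^2}$.

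The step I expect to be the real obstacle is the fibrewise passage from the curvature lower bound~(2) to a function with a matching complex-Hessian lower bound at $a$. In the line-bundle case this is automatic from $\ai\ddbar\varphi$, but for a vector bundle the natural candidate $-|s|^2_h$ picks up the connection term $\ai\{D'_h s,D'_h s\}_h$ in the Bochner--Kodaira identity, and this term has the ``wrong'' sign; it must be eliminated, which is precisely why $s$ is taken through a frame normal at $a$ (equivalently, one chooses holomorphic $s$ with $s(a)=\xi$ and $D'_h s(a)=0$). Getting the signs in that identity right, and verifying that the resulting $s$ lies in $A^2(\Delta(a;r),h)$, are the only delicate points; everything else is the verbatim one-dimensional argument, with the borderline $\varepsilon\ge g(a)$ handled, as in Corollary~\ref{cor:douchi}, by the optimal $L^2$-extension property rather than by this computation.
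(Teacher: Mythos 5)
Your proof is correct and follows essentially the same path as the paper: $(1)\Rightarrow(2)$ by applying Theorem~\ref{mainthm:koujigen} with a constant choice of $g_a$ (and your parenthetical remark that the proof of that theorem uses the hypothesis one $\xi$ at a time, so $r_\varepsilon$ may depend on $\xi$, correctly addresses a point the paper glosses over), and $(2)\Rightarrow(1)$ by passing to a holomorphic frame normal at $a$, testing against the constant-coefficient section $s$ with $s(a)=\xi$, and then running the one-variable sub-mean-value argument of Corollary~\ref{cor:douchi}. The only cosmetic difference is that the paper derives $\ai\ddbar(-|s|^2_h)(a)=\langle\ai\Theta_h(a)\xi,\xi\rangle$ from the explicit order-two Taylor expansion of $h$ in the normal frame (Demailly, Ch.~V, Prop.~12.10), whereas you obtain the same identity from the curvature formula $\ai\ddbar|s|^2_h=-\langle\ai\Theta_hs,s\rangle_h+\ai\langle D'_hs,D'_hs\rangle_h$ together with $D'_hs(a)=0$, and you handle the range $\varepsilon\ge g(a)$ (when $g(a)>0$) a bit more cleanly by monotonicity in $\varepsilon$ rather than by a separate appeal to the optimal $L^2$-extension theorem.
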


\begin{proof}[Proof of Corollary \ref{cor:douchivector}]
	The implication $(1)\Longrightarrow (2)$ is just a consequence of Theorem \ref{mainthm:koujigen} since we have that $\ai\Theta_h(a)\geq \max\{ g(a)-\varepsilon, 0\}\omega\otimes\Id_E\geq (g(a)-\varepsilon)\omega\otimes\Id_E\to g(a)\omega\otimes \Id_E$ as $\varepsilon\to 0$. 
	We only show $(2)\Longrightarrow(1)$. 
	If $\varepsilon\geq g(a)$, we have to prove $L_h(a,r,\xi)\leq 1$.
	This follows from the optimal $L^2$-extension theorem for (Nakano) semi-positive vector bundle. 
	Hence, we only need to consider the case that $\varepsilon < g(a)$. 
	Thanks to \cite[Chapter V, Proposition (12.10)]{Dem-agbook}, 
	we can take a holomorphic frame $(e_1, \ldots,e_r)$ of $E$ such that 
	$$
	h(e_\lambda(z), e_\mu(z))=\delta_{\lambda\mu}-c_{\lambda\mu}|z-a|^2+O(|z-a|^3),
	$$
	where $(\ai\Theta_{h})_a=\sum_{1\leq\lambda,\mu\leq r}c_{\lambda\mu}\dz\wedge\dbarz\otimes e_\lambda^\star\otimes e_\mu$. 
	We write $h_{\lambda\mu}$ for $h(e_\lambda, e_\mu)$ and $\xi=\xi^1e_1(a)+\cdots+\xi^re_r(a)$.
	We define a section $s\in H^0(\Omega, E)$ with constant coefficients by 
	$s(z):=\xi^1e_1(z)+\cdots +\xi^re_r(z)$. Then 
	$$
	|s|^2_h(z)=\sum_{1\leq \lambda,\mu\leq r} h_{\lambda\mu}\xi_\lambda\bar{\xi}_\mu=\sum_{1\leq \lambda\leq r}|\xi_\lambda|^2-|z-a|^2\sum_{1\leq \lambda,\mu\leq r} c_{\lambda\mu}\xi_\lambda\bar{\xi}_\mu+O(|z-a|^3). 
	$$
	Define $\Psi$ by $\Psi=-|s|^2_h$. 
	Then we have 
	$$
	\ai\ddbar\Psi(a)=\langle \ai\Theta_{h}\xi,\xi \rangle_h(a)\geq g(a)|\xi|^2_{h(a)}\omega
	$$
	from the assumption of $(2)$. 
	Set $\widetilde{\Psi}:=\Psi-(g(a)-\varepsilon)|\xi|^2_{h(a)}|z-a|^2$. 
	We then get 
	$$
	\ai\ddbar\widetilde{\Psi}(a)\geq \varepsilon|\xi|^2_{h(a)}\omega.
	$$
	Hence, there exists $r_\varepsilon>0$ such that $\widetilde{\Psi}$ is psh on $\Delta(a;r_\varepsilon)$. 
	It implies that 
	$$
	\frac{1}{\pi r^2}\int_{\Delta(a;r)}\widetilde{\Psi}\geq \widetilde{\Psi}(a)=-|s|^2_h(a)=-|\xi|^2_{h(a)}
	$$
	for $r\in (0,r_\varepsilon)$. 
	Repeating the same argument as in the proof of Corollary \ref{cor:douchi}, we can get 
	$$
	e^{-\frac{(g(a)-\varepsilon)}{2}r^2}\geq \frac{\int_{\Delta(a;r)}|s|^2_h}{\pi r^2 |\xi|^2_{h(a)}}\geq L_h(a,r,\xi).
	$$
\end{proof}

Considering the case that $\Omega=B$, $E=\pi_\star(K_{\mathcal{X}/B}\otimes \mathcal{L})$ and $h=H$, we can prove Theorem \ref{mainthm:directimage}. 

\begin{proof}[Proof of Theorem \ref{mainthm:directimage}]
	$(1)\Longrightarrow(2)$. Fix $a\in B$, $\xi\in E_a=H^0(X_a,K_{X_a}\otimes L_a)$ and $\varepsilon>0$. 
	We may assume that $\xi\neq 0$. 
	Corollary \ref{cor:douchivector} asserts that there exists $r_\varepsilon\in(0,\delta(a))$ such that for any $r\in(0,r_\varepsilon)$
	$$
	L_{H}(a,r,\xi)\leq e^{-\max\{\frac{(g(a)-\varepsilon)}{2},0\}r^2}. 
	$$
	Thanks to Lemma \ref{lem:vectorattain}, we get a holomorphic section $u\in A^2(\Delta(a;r), H)$ satisfying $u(a)=\xi$ and 
	$$
	\int_{\Delta(a;r)}|u|^2_H\leq e^{-\max\{\frac{(g(a)-\varepsilon)}{2},0\}r^2}\pi r^2 |\xi|^2_{H(a)}.  
	$$
	Define 
	$$
	s:=u\wedge \dtau\in H^0(\Delta(a;r), K_B\otimes E)=H^0(\pi^{-1}(\Delta(a;r), K_\mathcal{X}\otimes \mathcal{L}).
	$$
	Then $s$ satisfies $s|_{X_a}=\xi\wedge \dtau$ and 
	$$
	\int_{\pi^{-1}(\Delta(a;r))}c_{n+1}hs\wedge \bar{s}\leq e^{-\max\{\frac{(g(a)-\varepsilon)}{2},0\}r^2}\pi r^2 \int_{X_a} c_nh_a\xi\wedge\bar{\xi}.
	$$
	
	$(2)\Longrightarrow(1)$. 
	Take $a,\xi,\varepsilon, r_\varepsilon, r,s$ satisfying the condition of $(2)$ in Theorem \ref{mainthm:directimage}. 
	Then $s\in H^0(\pi^{-1}(\Delta(a;r), K_\mathcal{X}\otimes \mathcal{L})$ defines the unique section $u\in H^0(\Delta(a;r), \pi_\star(K_{\mathcal{X}/B}\otimes \mathcal{L}))$ such that $s=u\wedge\dtau$. 
	Then $u$ satisfies 
	$$
	\frac{\int_{\Delta(a;r)}|u|^2_H}{\pi r^2 |\xi|^2_{H(a)}}\leq e^{-\max\{\frac{(g(a)-\varepsilon)}{2},0\}r^2}
	$$
	as well, which implies that $L_H(a,r,\xi)\leq e^{-\max\{\frac{(g(a)-\varepsilon)}{2},0\}r^2}$ for $r\in(0,r_\varepsilon)$. 
	By Corollary \ref{cor:douchivector}, we see that 
	$$
	\ai\Theta_H(a)\geq g(a)\omega\otimes\Id_{\pi_\star(K_{\mathcal{X}/B}\otimes \mathcal{L})}. 
	$$
	
\end{proof}

If $\mathcal{L}$ is a positive line bundle, $\pi_\star(K_{\mathcal{X}/B}\otimes \mathcal{L})$ is positive as well (cf. \cite{Ber09}). 
%Moreover, when $\mathcal{L}$ is strictly positive along all fibers or 
Hence, in this setting, we get a sharper estimate from each fiber locally. 
Conversely, if we get a sharper estimate from each fiber, we can say that $(\pi_\star(K_{\mathcal{X}/B}\otimes \mathcal{L}), H)$ is strictly positive.

\section{New characterization of harmonicity}\label{sec:harmonicity}
As mentioned in Introduction, it is an interesting attempt to give a new characterization of a certain kind of curvature flatness in terms of the optimal Ohsawa--Takegoshi $L^2$-extension theorem. 
In this section, we firstly give a new characterization of harmonicity and a proof of Theorem \ref{mainthm:harmoniccharacterization}. 

\begin{proof}[Proof of Theorem \ref{mainthm:harmoniccharacterization}]
	(1)$\Longrightarrow$(2). 
We fix any $a\in \Omega$ and $r\in (0,\delta(a))$. 
Using Lemma \ref{lem:attain}, we take a holomorphic function $f$ on $\Delta(a;r)$ satisfying $f(a)=1$ and 
\[
\frac{\int_{\Delta(a;r)}|f|^2e^{-\varphi}}{\pi r^2e^{-\varphi(a)}}=L_\varphi(a,r). 
\]
Since $\varphi$ is harmonic, and subharmonic as well, due to the optimal $L^2$-extension theorem, we have that $L_\varphi(a,r)\leq 1$, that is, 
\[
	\frac{\int_{\Delta(a;r)}|f|^2e^{-\varphi}}{\pi r^2e^{-\varphi(a)}}\leq 1. 
\]
On the other hand, since $\varphi$ is harmonic, there exists a holomorphic function $h$ on $\Delta(a;r)$ such that $2\Rea(h)=\varphi$. 
Hence, for any holomorphic function $g$ on $\Delta(a;r)$, $|g|^2e^{-\varphi}=|ge^{-h}|^2$ is subharmonic. 
Then we get 
\[
\int_{\Delta(a;r)}|f|^2e^{-\varphi} \geq \pi r^2 e^{-\varphi(a)}. 	
\]
Consequently, it holds that 
\[
\int_{\Delta(a;r)}|f|^2e^{-\varphi}	= \pi r^2 e^{-\varphi(a). }
\]
Since the extension $f$ with minimum $L^2$-norm is uniquely determined (cf. the argument in Section \ref{sec:1jigen}), the proof is completed. 

	(2)$\Longrightarrow$(1). 	Theorem \ref{thm:minimalextensionproperty} implies that $\ai\ddbar\varphi\geq 0$. 
	If $\ai\ddbar\varphi(a)>0$ for some point $a\in\Omega$, we take $c>0$ such that $\ai\ddbar\varphi(a)\geq c\omega$. 
	Then there exists $r'>0$ such that for any $r\in(0,r')$ 
	$$
	L_\varphi(a, r)\leq e^{-\frac{c}{4}r^2}
	$$
	from Corollary \ref{cor:douchi2}. 
	This contradicts $L_\varphi\equiv 1$. 
\end{proof}

This theorem implies that if one cannot get any sharper estimates with respect to $\varphi$, $\varphi$ must be harmonic, and vice versa. 
As a higher rank analogue of this theorem, we prove Theorem \ref{mainthm:flatness}. 

% \begin{theorem}\label{thm:flatvector}
% 	Let $\Omega,E$ and $h$ be the same things as in Theorem \ref{mainthm:koujigen}. 
% 	If $L_h\equiv 1$, then $\ai\Theta_h\equiv 0$. 
% \end{theorem}

\begin{proof}
(1)$\Longrightarrow$(2). 
Take $(a,r,,\xi)\in \Omega_\delta\times_\Omega E\sasyugo \{0\} $. 
By using Lemma \ref{lem:vectorattain}, there exists a holomorphic section $s\in A^2(\Delta(a;r),h)$ such that $s(a)=\xi$ and 
\[
L_h(a,r,\xi)=\frac{\int_{\Delta(a;r)}|s|^2_h}{\pi r^2 |\xi|^2_{h(a)}}. 
\]
Thanks to the optimal $L^2$-extension theorem for (Nakano) semi-positive vector bundle, we see that $L_h(a,r,\xi)\leq 1$, that is, 
\[
	\frac{\int_{\Delta(a;r)}|s|^2_h}{\pi r^2 |\xi|^2_{h(a)}} \leq 1. 
\]
On the other hand, we fix any holomorphic section $u\in A^2(\Delta(a;r), h)$ with $u(a)=\xi$. 
The curvature formula implies that 
\begin{align*}
\ai\ddbar |u|^2_h &= - \langle \ai\Theta_h u,u\rangle_h + \ai \langle D'_h u, D'_h u\rangle_h \\
& = \ai \langle D'_h u, D'_h u\rangle_h \geq 0.
\end{align*}
Therefore, $|u|^2_h$ is a subharmonic function. 
Then it holds that 
\[
\frac{\int_{\Delta(a;r)} |s|^2_h}{\pi r^2 |\xi|^2_{h(a)}} \geq 1. 
\]
Hence, we get 
\[
	\frac{\int_{\Delta(a;r)}|s|^2_h}{\pi r^2 |\xi|^2_{h(a)}} = 1.
\]
The uniqueness of $s$ also follows (cf. the argument in Section \ref{sec:1jigen}).

(2)$\Longrightarrow$(1).
	Thanks to the optimal $L^2$-extension property for a holomorphic vector bundle (cf. \cite{DNWZ22}), we can say that $\ai\Theta_h\geq 0$. 
 If $\ai\Theta_h \not\equiv 0$, there exist $a\in \Omega$ and $\xi\in E_{a}$ with $\xi\neq 0$ such that $\langle \ai\Theta_h \xi, \xi\rangle_{h}(a)>0$. 
 We fix a positive number $c>0$ satisfying $\langle \ai\Theta_h \xi, \xi\rangle_{h}(a) \geq c |\xi|^2_{h(a)}\omega$.
 Similar to the proof of Corollary \ref{cor:douchivector}, we take a holomorphic frame $(e_1,\ldots, e_r)$ of $E$ and define a section $s$ with constant coefficients by $s=\xi^1 e_1+\cdots+\xi^re_r$, where $\xi=\xi^1e_1(a)+\cdots + \xi^re_r(a)$. 
 Taking $\varepsilon=c/2$ and repeating the argument there, we can prove that the $L^2$-extension index in $\xi$-direction satisfies
 %there exists some $r_c>0$ such that 
 \[
 L_h(a,r,\xi)\leq e^{-\frac{c}{4}r^2}
 \]
for any $r\in (0,r_c)$, where $r_c$ is some positive constant. 
 This contradicts $L_h\equiv 1$.
	%If $\ai\Theta_h(a)>0$ for some point $a\in\Omega$,  we take $c>0$ such that $\ai\Theta_h(a)\geq c\omega\otimes\Id_E$. 
	%We fix $\xi\in E_a\sasyugo\{0\}$.
	%Then, by using Corollary \ref{cor:douchivector}, we can take $r'>0$ such that for any $r\in(0,r')$
	% $$
	% L_h(a,r,\xi)\leq e^{-\frac{c}{4}r^2}.
	% $$
	% This contradicts $L_h\equiv 1$. 
\end{proof}

\appendix

\section{Further studies and related problems}\label{sec:further}
Shortly after this paper appeared on the arXiv, a new paper by Wang Xu was announced \cite{Xu22}, which generalized Theorem \ref{mainthm:koujigen} to higher dimensional cases. It seems interesting to generalize the definition of $L^2$-extension indices to higher dimensions and reformulate his results by using this notion.
In the appendix, we give a definition of $L^2$-extension indices for smooth Hermitian metrics on holomorphic vector bundles over an $n$-dimensional domain. 
We prepare notation. 
Let $\Omega$ be a bounded domain in $\C^n$, $\pi\colon E\to \Omega$ be a holomorphic vector bundle over $\Omega$ and $h$ be a smooth Hermitian metric on $E$. 
We also set $\Delta_r=\{ z\in \C \mid |z|<r\}$, $\mathbb{B}^m_s=\{ z\in \C^m \mid |z|<s \}$ and $P_{A,r,s}=A(\Delta_r\times\mathbb{B}_s^{n-1})$ for $r,s>0$ and $A\in \mathbf{U}(n)$.
Here $\mathbf{U}(n)$ is the set of all $n$-dimensional unitary groups and $P_{A,r,s}$ is nothing but holomorphic cylinder. 
Set
%$\widetilde{\delta}(a)=\{ (r,s)\in \R_{>0}\times \R_{>0} \mid P_{A,r,s}\subset \Omega, A\in \mathrm{U}(n)\}$ for $a\in\Omega$ and $\Omega_{\widetilde{\delta}}$ by $\Omega_{\widetilde{\delta}}=\{ (a,r,s)\in\Omega\times \R\times\R\mid (r,s)\in \widetilde{\delta}(a)\}$. 
$\Omega_{\widetilde{\delta}}=\{ (a,r,s,A)\in\Omega\times \R_{>0}\times\R_{>0}\times\mathbf{U}(n) \mid a+P_{A,r,s}\subset \Omega\}$ and $\Omega_{\widetilde{\delta}}\times_{\Omega} E\sasyugo\{ 0\}=\{ (a,r,s,A,\xi)\in \Omega_{\widetilde{\delta}}\times E\sasyugo\{ 0\} \mid a=\pi (\xi)\}$. 
We define an {\it $L^2$-extension index} $L_h\colon\Omega_{\widetilde{\delta}}\times_{\Omega} E\sasyugo\{ 0\}\to \R$ by 
\begin{equation}\label{eq:l2indiceshigher}
L_h(a,r,s,A,\xi)=\inf \left\{ \frac{\int_{a+P_{A,r,s}}|s|^2_h}{|P_{A,r,s}||\xi|^2_{h(a)}} ~\middle|~ s\in A^2(a+P_{A,r,s}, h), s(a)=\xi \right\},
\end{equation}
where $|P_{A,r,s}|$ is the volume of $P_{A,r,s}$. 
Here we follow the convention in Section \ref{sec:prelim}. 
Note that this definition is a generalization of Definition \ref{def:l2indexvector}. 

We explain why we take holomorphic cylinders. 
To get the Griffiths positivity of the metric (or more simply the plurisubharmonicity of the weight), we need to consider not polydiscs but holomorphic cylinders in (\ref{eq:l2indiceshigher}). 
See \cite[Lemma 3.1 and Remark 3.2]{DNW21} and \cite[Theorem 1.3]{DNWZ22} for detail. 

The same results discussed in the previous sections
can be established in parallel.
Indeed, for example, we can establish a characterization of pluriharmonic functions, which is a higher rank analogue of Theorem \ref{mainthm:harmoniccharacterization}. 

\begin{theorem}\label{thm:pluriharmonic}
	Let $\varphi$ be a smooth function on a domain $\Omega\subset \C^n$. Then the following are equivalent$\colon$
	\begin{enumerate}
		\item $\ai\ddbar\varphi=0$.
		\item $L_\varphi=1$, that is, for any $(a,r,s,A)\in \Omega_{\widetilde{\delta}}$, there exists a \textit{\textbf{unique}} holomorphic function $f$ on $a+P_{A,r,s}$ satisfying $f(a)=1$ and \[
		\int_{a+P_{A,r,s}}|f|^2e^{-\varphi}\leq |P_{A,r,s}|e^{-\varphi(a)}.	
		\]
	\end{enumerate}
\end{theorem}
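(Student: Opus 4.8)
The plan is to reduce the $n$-dimensional statement to the one-variable results already established, via slicing along holomorphic lines. For the implication $(1)\Longrightarrow(2)$, I would fix $(a,r,s,A)\in\Omega_{\widetilde{\delta}}$ and note that since $\varphi$ is pluriharmonic on a neighborhood of the cylinder $a+P_{A,r,s}$ (which is simply connected, being biholomorphic to a product of balls), there is a holomorphic function $h$ with $2\Rea(h)=\varphi$ there. Then for any holomorphic $g$ on $a+P_{A,r,s}$, the function $|g|^2e^{-\varphi}=|ge^{-h}|^2$ is plurisubharmonic, hence in particular subharmonic along each complex line. Taking the candidate minimizer $f$ (which exists by the same Bergman-kernel argument as in Lemma~\ref{lem:attain}, adapted to cylinders) with $f(a)=1$, the sub-mean-value property gives
\[
\int_{a+P_{A,r,s}}|f|^2e^{-\varphi}\geq |P_{A,r,s}|e^{-\varphi(a)},
\]
while the optimal $L^2$-extension property for the psh (indeed pluriharmonic) weight $\varphi$ gives the reverse inequality; hence equality holds, and $L_\varphi(a,r,s,A)=1$. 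Uniqueness of the extremal $f$ follows from the Hilbert-space projection argument recalled in Section~\ref{sec:1jigen}: the minimum-norm element of the affine subspace $\{f\in A^2(a+P_{A,r,s},\varphi)\mid f(a)=1\}$ is unique.

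For $(2)\Longrightarrow(1)$, I would first invoke the optimal $L^2$-extension property (the cylinder version, as in \cite[Theorem 1.3]{DNWZ22} / \cite[Lemma 3.1]{DNW21}) to deduce from $L_\varphi\leq 1$ that $\varphi$ is psh, i.e. $\ai\ddbar\varphi\geq 0$. Then I would argue by contradiction: if $\ai\ddbar\varphi(a)\not\equiv 0$ at some point $a$, choose a complex tangent direction $v$ with $\sum_{j,k}\partial_j\partial_{\bar k}\varphi(a)\,v_j\bar v_k>0$, and choose $A\in\mathbf{U}(n)$ so that $v$ is the first coordinate direction of the cylinder. Restricting $\varphi$ to the one-dimensional slice $a+\{(\zeta,0,\dots,0)\}$ and shrinking to slices inside the cylinder, one obtains, by the one-variable quantitative estimate of Corollary~\ref{cor:douchi2} applied on the disc factor, a sharper estimate $L$-value strictly below $1$ on small cylinders — contradicting $L_\varphi\equiv 1$. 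Making this last comparison precise requires relating the Bergman-kernel extremal on the full cylinder to the extremal on a single disc slice; the key point is that a competitor on the cylinder can be built from a competitor on the disc (independent of the $\mathbb{B}^{n-1}_s$-variables), so $L_\varphi(a,r,s,A)\leq L_{\varphi|_{\text{slice}}}(a,r)$ up to controlling the variation of $\varphi$ in the transverse directions.

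The main obstacle I anticipate is precisely this last estimate: the weight $\varphi$ is not constant in the $\mathbb{B}^{n-1}_s$-directions, so bounding $L_\varphi$ on the cylinder by the one-variable index on a slice is not immediate. I would handle it by sending $s\to 0$ (the cylinder degenerates to a disc) and using continuity of $\varphi$ together with the fact that $\int_{a+P_{A,r,s}}|f|^2e^{-\varphi}$ for an $f$ depending only on the first variable factors as $|\mathbb{B}^{n-1}_s|$ times a slice integral with a weight uniformly close to $\varphi|_{\text{slice}}$; quantifying this, and combining with the second-derivative Taylor expansion of $\varphi$ at $a$ exactly as in the proof of Theorem~\ref{mainthm:1jigen}, yields the strict bound $L_\varphi(a,r,s,A)\leq e^{-cr^2}<1$ for all sufficiently small $r$ and $s$, giving the desired contradiction. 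Alternatively — and perhaps more cleanly — one can bypass slicing in this direction by adapting the proof of Theorem~\ref{mainthm:1jigen} directly to cylinders: use a cylinder-analogue of Lemma~\ref{lem:attain}, the Cauchy--Schwarz/Jensen argument of inequalities (\ref{ineq:1})--(\ref{ineq:5}) with $|z-a|^2$ replaced by the squared modulus of the first cylinder coordinate, and conclude $\partial_1\partial_{\bar 1}\varphi(a)\leq 0$ in that direction; ranging over all $A\in\mathbf{U}(n)$ forces $\ai\ddbar\varphi(a)\leq 0$, which together with $\ai\ddbar\varphi\geq 0$ gives $\ai\ddbar\varphi=0$.
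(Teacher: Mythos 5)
The paper does not actually supply a proof of this theorem: the appendix simply asserts that the results ``can be established in parallel'' to the one-dimensional case (Theorem~\ref{mainthm:harmoniccharacterization}). So there is no argument in the paper to compare against line by line; the job is to check whether your parallelism is sound, and it is, with one caveat.

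Your $(1)\Longrightarrow(2)$ is correct and follows the paper's one-dimensional template: pluriharmonicity gives $|g|^2e^{-\varphi}=|ge^{-h}|^2$ plurisubharmonic for holomorphic $g$, hence the cylinder sub-mean-value inequality (integrate the circle sub-mean-value in the disc factor and the ball sub-mean-value in the transverse factor) yields $\ge$, while the optimal $L^2$-extension property on holomorphic cylinders for psh weights (the higher-dimensional result underlying \cite{DNWZ22}, \cite{Kik22}) yields $\le$; an even slicker route is to exhibit the explicit extremal $f=e^{h-h(a)}$, for which $|f|^2e^{-\varphi}\equiv e^{-\varphi(a)}$. Uniqueness is the min-norm projection in the Hilbert space $A^2(a+P_{A,r,s},\varphi)$. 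For $(2)\Longrightarrow(1)$, your ``Option A'' is the right argument and you correctly isolated the only nontrivial point — the weight is not constant transversally. Taking a competitor $f=f(z_1)$, bounding $e^{-\varphi(z_1,w)}\le e^{Ms}e^{-\varphi(z_1,0)}$ uniformly, and then applying the one-variable estimate of Corollary~\ref{cor:douchi2} to the slice weight $\varphi(\cdot,0)$ to get a factor $e^{-cr^2/2}$, one chooses first $r$ small and then $s\ll r^2$ so that $e^{Ms}e^{-cr^2/2}<1$, giving $L_\varphi(a,r,s,A)<1$ and the desired contradiction with $L_\varphi\equiv 1$.

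The one genuine gap is in your ``Option B,'' offered as a cleaner alternative. A direct transplant of Corollary~\ref{cor:douchi}'s auxiliary function to the cylinder — setting $\widetilde{\Psi}=-|f|^2e^{-\varphi}-c'|z_1-a_1|^2$ — does not in general produce a plurisubharmonic $\widetilde{\Psi}$ near $a$: one has $\ai\ddbar(-|f|^2e^{-\varphi})(a)=e^{-\varphi(a)}\ai\ddbar\varphi(a)$, which is only positive \emph{semi}-definite, so subtracting $c'$ from the $(1,1)$ entry can destroy positive semi-definiteness (e.g.\ take $\ai\ddbar\varphi(a)$ to be the all-ones rank-one matrix). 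Likewise the transverse restriction $w\mapsto -|f(0,w)|^2e^{-\varphi(0,w)}$ need not be psh on a full small ball, so the cylinder sub-mean-value inequality for $\widetilde{\Psi}$ is not automatic. One can repair this by also adding $\varepsilon|w|^2$ to $\widetilde{\Psi}$, choosing $\varepsilon$ first and then $c'\ll\varepsilon$ so that $e^{-\varphi(a)}\ai\ddbar\varphi(a)+\varepsilon\,\Id_{w}-c'E_{11}>0$, and finally taking $s$ small relative to $r$ so that the extra positive term $\varepsilon\int_P|w|^2$ is dominated by $c'\int_P|z_1|^2$. But as written, Option~B skips this, and the salvage essentially re-introduces the ``$s\to 0$'' step, so Option~A remains the cleaner route. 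Also note that your parenthetical ``conclude $\partial_1\partial_{\bar1}\varphi(a)\le 0$'' is not what the Theorem~\ref{mainthm:1jigen}-type argument gives under $L_\varphi\le 1$ — that direction yields $\ge 0$; the upper bound must come from the contradiction with $L_\varphi\equiv 1$ as you do in Option~A.
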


If we are only dealing with plurisubharmonicity or Griffiths positivity, it is sufficient to consider only the one-dimensional case.
%Hence, we will not go any deeper in this paper. 

We can also extend the definition of the $L^2$-extension index to general upper semi-continuous functions. 
For an upper semi-continuous function $\varphi$ on a bounded domain $\Omega \subset \C^n$, %we define a subset $\Omega'_{\widetilde{\delta}}$ of $\Omega_{\widetilde{\delta}}$ as follows: $\Omega'_{\widetilde{\delta}}=\{ (a,r,s,A)\in \Omega_{\widetilde{\delta}}\mid \varphi(a)>-\infty \}$. 
we define the $L^2$-extension index $L_\varphi$ of $\varphi$ on $\Omega_{\widetilde{\delta}}$ as follows: for $(a,r,s,A)\in \Omega_{\widetilde{\delta}}$, if $\varphi(a)>-\infty$,
\begin{align*}
L_{\varphi}(a,r,s,A):&= \frac{1}{|P_{r,s,A}|K_{P_{r,s,A}, \varphi}(a)}\\
&=\inf \left\{ \frac{\int_{a+P_{r,s,A}}|f|^2e^{-\varphi}}{|P_{r,s,A}|e^{-\varphi(a)}} ~\middle|~ f\in A^2(a+P_{r,s,A}, \varphi) \And f(a)=1 \right\},
\end{align*}
and if $\varphi(a)=-\infty$, $L_{\varphi}(a,r,s,A)=0$.
By using this concept, we can consider various generalizations of the theorems established in this paper.
For example, as a generalization of Theorems \ref{mainthm:harmoniccharacterization} and \ref{thm:pluriharmonic}, we can consider the following conjecture.

\begin{conjecture}\label{conj:pluriharmonic}
Let $\varphi$ be an \textit{\textbf{upper semi-continuous}} function on $\Omega\subset \C^n$. Then the following are equivalent$\colon$
\begin{enumerate}
	\item $\varphi$ is pluriharmonic.
	\item $L_\varphi=1$, that is, $\varphi>-\infty$ and for any $(a,r,s,A)\in \Omega_{\widetilde{\delta}}$, there exists a unique holomorphic function $f$ on $a+P_{A,r,s}$ satisfying $f(a)=1$ and \[
	\int_{a+P_{A,r,s,}}|f|^2e^{-\varphi} \leq |P_{A,r,s}|e^{-\varphi(a)}. 	
	\]
\end{enumerate}
\end{conjecture}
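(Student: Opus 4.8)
The statement is an equivalence, and I expect $(1)\Rightarrow(2)$ to be essentially a rerun of the smooth argument, while $(2)\Rightarrow(1)$ is the genuinely delicate direction — which is why the statement is offered only as a conjecture. Two facts about a holomorphic cylinder $P:=a+P_{A,r,s}\subset\Omega$ will be used repeatedly: $P$ is convex (it is the affine--unitary image of $\Delta_r\times\B^{n-1}_s$), hence simply connected; and the unweighted Bergman kernel of $P$ at its centre equals $1/|P_{A,r,s}|$, since the normalised constant is the only orthonormal basis function not vanishing at the centre.

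For $(1)\Rightarrow(2)$ the plan is: if $\varphi$ is pluriharmonic it is continuous and finite, so $\varphi>-\infty$ holds, and on the simply connected $P$ one may write $\varphi=2\Rea(H)$ with $H$ holomorphic, because $\partial\varphi$ is a closed holomorphic $1$-form. For any $g\in A^2(P,\varphi)$ with $g(a)=1$ one then has $\int_P|g|^2e^{-\varphi}=\int_P|ge^{-H}|^2\geq|P_{A,r,s}|\,|e^{-H(a)}|^2=|P_{A,r,s}|\,e^{-\varphi(a)}$ by the Bergman kernel bound, while $f:=e^{H-H(a)}$ satisfies $|f|^2e^{-\varphi}\equiv e^{-\varphi(a)}$, giving equality. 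Uniqueness of the extremal is the standard Hilbert-space fact that the norm minimiser on the affine hyperplane $\{g(a)=1\}$ of $A^2(P,\varphi)$ is unique; this space is a Hilbert space with bounded point evaluations because $\varphi$ is psh, hence locally bounded above, so $e^{-\varphi}$ is locally bounded below by a positive constant.

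For $(2)\Rightarrow(1)$: the hypothesis $L_\varphi=1$ forces in particular $L_\varphi\leq1$, i.e.\ $\varphi$ satisfies the optimal $L^2$-extension property over every holomorphic cylinder, hence $\varphi$ is psh by the $n$-dimensional minimal extension property \cite[Theorem 1.3]{DNWZ22} (the recourse to cylinders rather than polydiscs is exactly what is needed, cf.\ \cite[Lemma 3.1, Remark 3.2]{DNW21}). It remains to upgrade ``psh'' to ``pluriharmonic'', following the pattern of $(2)\Rightarrow(1)$ in Theorems \ref{mainthm:harmoniccharacterization} and \ref{thm:pluriharmonic}: assuming $\varphi$ is not pluriharmonic, one seeks a single cylinder $P$ with $L_\varphi(P)<1$, contradicting $L_\varphi\equiv1$. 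Since $\varphi$ is pluriharmonic precisely when its restriction to every complex line is harmonic (polarise $\ai\ddbar\varphi$), there is a line along which $\varphi$ restricts to a non-harmonic subharmonic function; equivalently, for small $\varepsilon$ the mollification $\varphi_\varepsilon=\varphi*\rho_\varepsilon\downarrow\varphi$ is smooth psh with a strictly positive complex-Hessian eigenvalue at some point $a_0$ in a fixed compact set, and the higher-dimensional smooth counterpart of Corollary \ref{cor:douchi} underlying Theorem \ref{thm:pluriharmonic} (cf.\ also \cite{Xu22}) then produces a thin cylinder at $a_0$, with axis in that eigendirection, on which $L_{\varphi_\varepsilon}<1$. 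An alternative, mollification-free route is to build the improved extension directly from the Riesz decomposition of the restriction of $\varphi$ to a suitable complex line, the gain coming from the non-zero mass of $\ai\ddbar\varphi$ there, and then to fatten the extremal disc into a thin transverse cylinder.

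The hard part will be transferring such a strict estimate from $\varphi_\varepsilon$ (or from a line restriction) back to $\varphi$ itself. Because $\varphi_\varepsilon\geq\varphi$, the substitution $e^{-\varphi_\varepsilon}\leq e^{-\varphi}$ moves both sides of the inequality in the unfavourable direction, and the smooth estimate's admissible radius may shrink as $\varepsilon\to0$ (the higher derivatives of $\varphi_\varepsilon$ blow up), so one cannot merely pass to the limit. The natural fix is to prove the convergence $|P_{A,r,s}|K_{P_{A,r,s},\varphi_\varepsilon}(a)\to|P_{A,r,s}|K_{P_{A,r,s},\varphi}(a)$ of normalised weighted Bergman kernels on a \emph{fixed} cylinder (so the contradiction cylinder is chosen once and for all, independently of $\varepsilon$), or, what amounts to the same thing, to establish an upper-semicontinuous analogue of Corollary \ref{cor:douchivector} without passing through a smooth weight. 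A subsidiary point is that for a merely psh finite-valued weight one must also check $e^{-\varphi}\in L^1_{\mathrm{loc}}$ and control the transverse fibre averages $\frac{1}{|\B^{n-1}_s|}\int_{\B^{n-1}_s}e^{-\varphi(z_1,\cdot)}$ as $s\to0$, which is where the upper-semicontinuity hypothesis must be used carefully. Settling the $\varepsilon\to0$ transfer is, in my view, the essential obstacle, and is why the statement remains a conjecture.
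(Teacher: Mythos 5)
The statement you were asked to prove is in fact stated in the paper \emph{only as a conjecture}; the paper offers no proof of it, so there is nothing to compare against. Your proposal is properly framed as a strategy, and it accurately maps out the landscape. The $(1)\Rightarrow(2)$ argument you give is in fact complete and correct: a pluriharmonic $\varphi$ is real-analytic (both $\varphi$ and $-\varphi$ are psh, hence $\varphi\in L^1_{\mathrm{loc}}$ and $\ai\ddbar\varphi=0$ gives smoothness), so it is finite and continuous; on the simply connected cylinder $P=a+P_{A,r,s}$ one writes $\varphi=2\Rea H$ with $H$ holomorphic, the unweighted Bergman kernel of the circular domain $P$ at its centre is $1/|P_{A,r,s}|$, and the identity $|g|^2e^{-\varphi}=|ge^{-H}|^2$ reduces the estimate to the unweighted mean-value inequality, with equality achieved by $f=e^{H-H(a)}$ and uniqueness following from strict convexity of the $L^2$-norm. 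This is precisely the $n$-dimensional analogue of the $(1)\Rightarrow(2)$ proof of Theorem \ref{mainthm:harmoniccharacterization}. For $(2)\Rightarrow(1)$ you correctly reduce to ``psh but not pluriharmonic implies $L_\varphi<1$ somewhere'' and you correctly identify why the smooth-case argument (the explicit test function $e^{c(z-a)}$ and the Taylor-expansion step in Corollary \ref{cor:douchi}) does not transfer through mollification: the admissible radius in that argument depends on the $C^2$-modulus of the weight, which degenerates as $\varphi_\varepsilon\to\varphi$. Your two proposed remedies (convergence of normalised weighted Bergman kernels on a fixed cylinder, or a direct argument via the Riesz decomposition of a line restriction) are both plausible, and your observation that $e^{-\varphi_\varepsilon}\leq e^{-\varphi}$ pushes the estimate the wrong way is exactly the reason naive monotone passage fails. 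In short: the easy direction is fully settled by your argument; the hard direction is open, you say so, and you have put your finger on the genuine obstacle --- which is consistent with the paper leaving this as a conjecture.
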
	
The formulation (2) in Conjecture \ref{conj:pluriharmonic} does not assume the regularity of the function.
It would be interesting that regularity, such as pluriharmonicity, could be obtained from such a condition. 

\begin{remark}
    After uploading this paper, there have been some developments regarding conjecture \ref{conj:pluriharmonic}. 
    In the case of continuous functions, we proved the conjecture to be correct in \cite{Ina23}. 
    Subsequently, Liu and Xu proved it for general $\R$-valued measurable functions in \cite{LX23}.
\end{remark}

% \vspace{\baselineskip}
% \noindent
% {\large {\bf Declarations}}

% \vspace{\baselineskip}
% \noindent
% {\bf Conflict of interest} The author states that there is no conflict of
% interest.

% \vspace{\baselineskip}
% \noindent
% {\bf Data availability} Data sharing not applicable to this article as no datasets were generated or analysed during the current study.

% \vspace{\baselineskip}
% \noindent 
% {\bf Code availability} Not applicable. 

%%%%%%%%%%%%%%%%%%%%%%%%%%%%%%%%%%%%%%%%%%%%%%%%%%%%%%
%%%%%%%%%%%%%%%%%%%%%%%%%%%%%%%%%%%%%%%%%%%%%%%%%%%%%%

%%%%%%%%%%%% References %%%%%%%%%%%%%
%%
%<Author name> is written as Initial of Given Name, and Family Name.
%<Title> is written in roman letters.
%<Journal name> should be abbreviated according to
% the MR Serials Abbreviations List of Mathematical Reviews:
% (Abbreviations of Names of Serials; http://www.ams.org/mr-database)
%For <Pages>, use en-dash "--" between page numbers.
%%

\end{document}